\author{Kevin Hughes}
\title{Problems and Results related to Waring's problem: Maximal functions and ergodic averages}
\newcommand{\dimension}{d}
\newcommand{\degree}{k}
\newcommand{\C}{\mathbb{C} }
\newcommand{\R}{\mathbb{R} }
\newcommand{\euclideanspace}{\R^{\dimension}}
\newcommand{\Z}{\mathbb{Z} }
\newcommand{\lattice}{\Z^{\dimension}}
\newcommand{\T}{\mathbb{T} }
\newcommand{\torus}{\mathbb{T}^\dimension}
\newcommand{\Q}{\mathbb{Q} }
\newcommand{\N}{\mathbb{N} }
\newcommand{\interval}{I}
\newcommand{\Zmod}[1]{\Z(#1)}
\newcommand{\unitsmod}[1]{\Z(#1)^\times}
\newcommand{\discretecube}[1]{C(#1)}
\newcommand{\unitinterval}{[0,1]}
\newcommand{\unitcube}{[-1/2,1/2]^\dimension}
\newcommand{\norm}[1]{\left\Arrowvert #1 \right\Arrowvert}
\newcommand{\Lpnorm}[3]{\left\| #3 \right\|_{L^{#1}(#2)}}
\newcommand{\lpnorm}[2]{\left\| #2 \right\|_{\ell^{#1}(\Z^\dimension)}}
\newcommand{\disup}[3]{\sup_{#1 \leq #2 < 2#1} #3}
\newcommand{\inparentheses}[1]{\left( #1 \right)}
\newcommand{\inbrackets}[1]{\left[ #1 \right]}
\newcommand{\inbraces}[1]{\left\{ #1 \right\}}
\newcommand{\sizeof}{\#}
\newcommand{\volumeof}{Vol}
\newcommand{\absolutevalueof}[1]{\left\lvert #1 \right\rvert}
\newcommand{\innerproductof}[2]{\langle #1,#2 \rangle}
\newcommand{\conjugate}[1]{\overline{#1}} 
\newcommand{\eof}[1]{e \left({#1} \right)} 
\newcommand{\arithmeticFT}[1]{\widehat{#1}}
\newcommand{\latticeFT}[1]{\widehat{#1}}
\newcommand{\torusFT}[1]{\widehat{#1}}
\newcommand{\contFT}[1]{\widetilde{#1}}
\newcommand{\spacepoint}{x}
\newcommand{\freqpoint}{\xi} 
\newcommand{\latticepoint}{m}
\newcommand{\toruspoint}{\xi}
\newcommand{\fxn}{f}
\newcommand{\measure}{\mu}
\newcommand{\inverse}[1]{\frac{1}{#1}}
\newcommand{\gradient}{\nabla}
\newcommand{\indicator}[1]{\mathbf{1}_{#1}}
\newcommand{\bumpfxn}{\varphi}
\newcommand{\convolvedwith}{*}
\newcommand{\uptoconstants}{\eqsim} 
\newcommand{\floor}[1]{\lfloor #1 \rfloor}
\newcommand{\Fareyfraction}{\unit/\modulus}
\newcommand{\WaringFareylevel}{R^{k-1}}
\newcommand{\WaringMajorarclevel}{R}
\newcommand{\acceptableradii}{\mathcal{R}_{\degree,\dimension}} 
\newcommand{\modulus}{q}
\newcommand{\bigmodulus}{Q}
\newcommand{\unit}{a}
\newcommand{\twistedGausssum}[1]{G(\unit,\modulus,#1)}
\newcommand{\numberoflatticepoints}{N_{\degree,\dimension}}
\newcommand{\numberoflatticepointsonspheres}{N_{2,\dimension}}
\newcommand{\numberoflatticepointsonfourspheres}{N_{2,4}}
\newcommand{\hypothesis}[1]{H_{\degree}\left(#1 \right)}
\newcommand{\powerloss}{\theta} 
\newcommand{\powersavings}{\nu} 
\newcommand{\dyadicsavings}{\nu} 
\newcommand{\nonzeroasymptoticshold}{\widetilde{G}_1(\degree)} 
\newcommand{\arc}{I(\Fareyfraction)}
\newcommand{\majorarcs}{\mathfrak{M}}
\newcommand{\minorarcs}{\mathfrak{m}}
\newcommand{\radius}{r}
\newcommand{\Fareysequence}[1]{\mathfrak{F}_{#1}}
\newcommand{\arithmeticsphere}[1]{S^{\degree,\dimension}_{#1}} 
\newcommand{\arithmetichigherordersphere}{S^{\degree,\dimension}_{\radius}} 
\newcommand{\charfxnofarsphere}{{\arithmetichigherordersphere}} %
\newcommand{\multerror}{{E}_{\radius}}
\newcommand{\error}{{E}_{\radius}}
\newcommand{\HLmultiplier}{c}
\newcommand{\HLop}{C_{\radius}}
\newcommand{\maxHLop}{C_*}
\newcommand{\dyadicmaxHLop}{C_{\dyadicradius}}
\newcommand{\dyadicradius}{R}
\newcommand{\higherordersphere}{\mathcal{S}^{\degree,\dimension}_{\radius}} 
\newcommand{\sphere}[1]{\mathcal{S}_{#1}} 
\newcommand{\conthigherordersphere}{\mathcal{S}_{\radius}^{\degree,\dimension}} 
\newcommand{\unithigherordersphere}{\mathcal{S}^{\degree,\dimension}_1} 
\newcommand{\surface}{\mathcal{S}}
\newcommand{\spheremeasure}{\sigma}
\newcommand{\higherorderspheremeasure}{\sigma}
\newcommand{\surfacemeasure}{\sigma}
\newcommand{\GLform}{d\sigma}
\newcommand{\amplitudefxn}{a}
\newcommand{\normalat}[1]{v_{#1}}
\newcommand{\tangentspaceat}[1]{T_{#1}}
\newcommand{\contoperator}{N}
\newcommand{\contmultiplier}{n}
\newcommand{\sphereaverage}[2]{\mathcal{A}_{#1}#2} 
\newcommand{\spheremaximal}[1]{\mathcal{A}^* #1} 
\newcommand{\higherorderspheremultiplier}{\arithmeticFT{A_{\radius}}}
\newcommand{\arithmetichigherordersphereaverage}{A_{\radius}} 
\newcommand{\arithmetichigherorderspheremaximal}[1]{A^* #1} 
\newcommand{\averagefxn}{A_{\radius}\fxn} 
\newcommand{\avgop}{A_{\radius}} 
\newcommand{\dyadicmaxfxn}{A_{\dyadicradius}\fxn} 
\newcommand{\maxfxn}{A_*\fxn}
\newcommand{\maxop}{A_*}
\newcommand{\dyadicmaxop}{A_\dyadicradius}
\newcommand{\contaveragefxn}{\mathcal{A}_{\radius}\fxn} 
\newcommand{\contavgop}{\mathcal{A}_{\radius}} 
\newcommand{\contmaxfxn}{\mathcal{A}_*\fxn}
\newcommand{\contmaxop}{\mathcal{A}_*}
\newcommand{\measurespace}{X}
\newcommand{\measurespacepoint}{x}
\newcommand{\measurepreservingtransform}{T}
\newcommand{\mps}{(\measurespace, \measure, \measurepreservingtransform)} 
\newcommand{\arithmeticergodicsphericalaverage}{\mathfrak{A}_{\radius}}
\newcommand{\arithmeticergodicsphericalmaximal}{\mathfrak{A}^{*}}
\newcommand{\truncatedarithmeticergodicsphericalmaximal}[1]{\mathfrak{A}^{*}_{#1}}
\newcommand{\truncatedarithmetichigherorderspheremaximal}[1]{A^{*}_{#1}}
\newcommand{\spectralmeasure}{\nu}
\newcommand{\spectralpoint}{\eta}
\newcommand{\transferfxn}{F}
\newcommand{\mfxn}{f}
\newcommand{\transferHLoponarc}{{C}_{\radius}^{\Fareyfraction}}
\newcommand{\transfermaxHLoponarc}{C_{*}^{\Fareyfraction}}
\theoremstyle{plain}
\theoremstyle{definition}
\theoremstyle{remark}
\newtheorem{rem}{Remark}[section]
\theoremstyle{plain}
\newtheorem{prop}{Proposition}[section]
\newtheorem{lemma}{Lemma}[section]
\newtheorem{thm}{Theorem}
\newtheorem*{fact}{Fact}
\newtheorem{cor}{Corollary}[section]
\newtheorem{conjecture}{Conjecture}
\newtheorem*{Huabound}{Hua's bound for Gauss sums}
\newtheorem*{Hypothesis}{Hypothesis $\hypothesis{\powerloss}$}
\newtheorem*{Wooley}{Wooley}
\newtheorem*{MSW}{Magyar--Stein--Wainger}
\newtheorem*{MSW_transference}{Magyar--Stein--Wainger transference lemma}
\newtheorem*{Magyar}{Magyar}
\newtheorem*{Ionescu}{Ionescu}
\newtheorem*{dyadic_major_arc_approximation}{Dyadic Major Arc Approximation Lemma}
\newtheorem*{approximationlemma}{The Approximation Formula}
\newtheorem*{Stein}{Stein}
\newtheorem*{BNW}{Bruna--Nagel--Wainger}
\newtheorem*{RubiodeFrancia}{Rubio de Francia maximal theorem}
\newtheorem*{mpsmaximaltheorem}{Maximal Theorem for Measure Preserving Systems}
\newtheorem*{Ltwoergodictheorem}{$L^2$ Ergodic Theorem} 
\newtheorem*{pointwiseergodictheorem}{Pointwise Ergodic Theorem}
\newtheorem*{spectraltheorem}{The Spectral Theorem}
\newtheorem*{oscillation_inequality_mps}{Oscillation Inequality for Measure Preserving Systems}
\newtheorem*{oscillation_inequality_lattice}{Transfer Oscillation Inequality}
\begin{document}
\maketitle 
%

%
\begin{abstract}
We study the arithmetic analogue of maximal functions on diagonal hypersurfaces. 
This paper is a natural step following the papers of \cite{Magyar_dyadic},  \cite{Magyar_ergodic} and \cite{MSW}. 
We combine more precise knowledge of oscillatory integrals and exponential sums to generalize the asymptotic formula in Waring's problem to an approximation formula for the fourier transform of the solution set of lattice points on hypersurfaces arising in Waring's problem and apply this result to arithmetic maximal functions and ergodic averages. 
In sufficiently large dimensions, the approximation formula, $\ell^2$-maximal theorems and ergodic theorems were previously known. 
Our contribution is in reducing the dimensional constraint in the approximation formula using recent bounds of Wooley, and improving the range of $\ell^p$ spaces in the maximal and ergodic theorems. We also conjecture the expected range of spaces.
\end{abstract}

\section{Introduction} 
In this paper we generalize the asymptotic formula in Waring's problem to an approximation formula for the Fourier transform of the solution set of lattice points on a certain class of hypersurfaces. Next we apply this result to arithmetic maximal functions and ergodic averages. The approximation formula was previously known in sufficiently large dimensions while the maximal and ergodic theorems were previously known only for $\ell^2$ -- see \cite{Magyar_ergodic}. 
Using recent bounds of Wooley, our contribution is an improved error estimate in the approximation formula  and an improved range of $\ell^p$ spaces in the maximal and ergodic theorems. We also conjecture the expected range of spaces.
In related papers we investigate applications to Szemer\'edi theorems as in \cite{Magyar_distance}, discrepancy theory as in \cite{Magyar_discrepancy} and restriction theory as in \cite{Hu_Li1}. 

\subsection{Arithmetic maximal functions}
Fix the degree $\degree \geq 2$ and dimension $\dimension$, positive integers. We define the \emph{arithmetic $\degree$-sphere of radius $\radius$ in $\dimension$ dimensions} as 
\[
\arithmetichigherordersphere := \{\latticepoint \in \Z^\dimension : \sum_{i=1}^{\dimension} |\latticepoint_i|^\degree=r^\degree\} 
. 
\]
$\arithmetichigherordersphere$, the arithmetic $\degree$-sphere of radius $\radius$, contains $\numberoflatticepoints(\radius) = \# \arithmetichigherordersphere$ lattice points. 
$\arithmetichigherordersphere$ is possibly non-empty only when $\radius^\degree \in \N$; we denote the set of positive radii $\radius$ such that $\arithmetichigherordersphere \not= \emptyset$ by $\acceptableradii$. 
For a function $\fxn : \lattice \to \C$ and $\radius \in \acceptableradii$, we introduce the \emph{$\degree$-spherical averages}, \emph{dyadic $\degree$-spherical maximal function} and \emph{(full) $\degree$-spherical maximal function} respectively,  

\begin{align*}
\averagefxn(x) &= \inverse{\numberoflatticepoints(\radius)} \sum_{y \in \arithmetichigherordersphere} \fxn(x-y) , \\ 
\dyadicmaxfxn &= \disup{\dyadicradius}{\radius} \absolutevalueof{\averagefxn} , \\ 
\maxfxn &= \sup_{\radius \in \acceptableradii} \absolutevalueof{\averagefxn} 
.
\end{align*}

\begin{rem}
Throughout, all averages will be restricted to $\radius \in \acceptableradii$; that is, only $\degree$-spheres with lattice points on them; in particular, the dyadic supremum above is restricted to $\radius \in \acceptableradii$. 
\end{rem}

These maximal functions are the arithmetic analogues of continuous maximal functions over $\degree$-spheres in Euclidean space. In the continuous setting, maximal functions associated to compact convex hypersurfaces are bounded on a range of $L^p(\R^\dimension)$ spaces depending on the geometry of the hypersurface. Therefore, it is natural to ask: \emph{when is $\maxop$ bounded on $\ell^p(\Z^\dimension)$?} 
For sufficiently large $\radius \in \acceptableradii$, $\numberoflatticepoints(\radius) \uptoconstants \radius^{\dimension-\degree}$ when $\dimension$ is sufficiently large with respect to $\degree$. 
Testing the maximal operator on the Dirac delta function ($\delta(\latticepoint)$ is 1 if $\latticepoint = 0$ and 0 otherwise), we expect that the maximal operator is bounded on $\ell^p$ for $p>\frac{\dimension}{\dimension-\degree}$. Let $\nonzeroasymptoticshold$ be the smallest dimension such that $\numberoflatticepoints(\radius) \uptoconstants \radius^{\dimension-\degree}$. 
By the Hardy--Littlewood circle method and Jacobi's 4-squares formula, we know that $\nonzeroasymptoticshold = 5$ for $\degree=2$, see Theorem 4.1 on p. 20 of \cite{Davenport} and Theorem 3.6 on p. 304 of \cite{SteinComplexAnalysis} respectively. The asymptotics for $\numberoflatticepoints(\radius)$ when $\degree \geq 3$; in particular, the value of $\nonzeroasymptoticshold$ is an open problem in number theory. Recent work of Wooley, in particular Theorem 1.4 on page 4 of \cite{Wooley_efficient_congruencing1}, shows that $\nonzeroasymptoticshold \leq 2\degree^2 + 2\degree - 3$ for $\degree \geq 3$. We will always assume that the dimension $\dimension \geq 2\degree^2 + 2\degree - 3$ so that this holds. 

\subsection{Previous results and conjectures}
In \cite{Magyar_dyadic}, Magyar initiated the study arithmetic $\degree$-spherical maximal functions and proved that the dyadic maximal operator is bounded uniformly in $\dyadicradius$ for a range of $\ell^p(\Z^\dimension)$ spaces depending on the degree and dimension. 
\begin{Magyar}
If $\degree = 2$, then $\dyadicmaxop$ is bounded on $\ell^{p}(\Z^\dimension)$ for $p> \frac{\dimension}{\dimension-{2}}$ and $\dimension \geq 5$.
If $\degree \geq 3$, then $\dyadicmaxop$ is bounded on $\ell^{p}(\Z^\dimension)$ for $p> \frac{\dimension}{\dimension-{\degree 2^{\degree}}}$ and $\dimension > \degree 2^{\degree+1}$.
\end{Magyar}
For continuous maximal functions, the boundedness of the dyadic maximal operator is equivalent to the full maximal function by the use of Littlewood--Paley theory. However, this argument fails in the arithmetic setting, and a new idea is needed to understand the full maximal function. Building on Magyar's work, \cite{MSW} studied the full maximal function for degree $\degree = 2$ proving the arithmetic analogue of Stein's spherical maximal theorem -- see \cite{Steinsphericalmaximalfunction}. 
\begin{MSW}
Let $\degree=2$ and $\dimension \geq 5$, then $\maxop$ is bounded on $\ell^p(\Z^\dimension)$ for $p>\frac{\dimension}{\dimension-2}$.
\end{MSW}
\begin{rem}
If $\dimension \geq 5$, then $\numberoflatticepointsonspheres(\radius) \uptoconstants \radius^{\dimension-2}$. Testing $\maxop$ on the delta function, one deduces that $\maxop$ is unbounded on $\ell^p(\lattice)$ for $p\leq \frac{\dimension}{\dimension-2}$. For dimensions $\dimension \leq 4$ the maximal function is only bounded on $\ell^\infty(\Z^\dimension)$. This is because in $\Z^4$, there are precisely 24 lattice points on a sphere of radius $2^j$; that is, $\numberoflatticepointsonfourspheres(2^j) = 24$ for all $j \in \N$. 
\end{rem}

Subsequently, \cite{Ionescu} improved the Magyar--Stein--Wainger result by proving the restricted weak-type result at the endpoint $p=\frac{\dimension}{\dimension-2}$. 
\begin{Ionescu}
Let $\degree=2$ and $\dimension \geq 5$, then $\maxop$ is bounded from $\ell^{\frac{\dimension}{\dimension-2},1}_{rest}(\Z^\dimension)$ to $\ell^{\frac{\dimension}{\dimension-2},\infty}(\Z^\dimension)$.
\end{Ionescu}
This result is analogous to the Bourgain's restricted weak-type result for the continuous spherical maximal function in 3 or more dimensions -- see \cite{Bourgain}.

\cite{Magyar_ergodic} extended the results of Magyar--Stein--Wainger to positive definite, nondegenerate, homogeneous integral forms to prove boundedness of the corresponding maximal operator on $\ell^2(\Z^\dimension)$ and pointwise convergence of their ergodic averages when $\dimension>(\degree-1)2^\degree$; this includes the family of $\degree$-spheres considered here. 
%
%
Based on these examples and results, we conjecture when $\maxop$ is bounded on $\ell^p(\lattice)$ for $\degree \geq 3$.
\begin{conjecture}\label{conjecture:higher_degree_strong_type}
If $\dimension \geq \nonzeroasymptoticshold$, then $\maxop$ is bounded on $\ell^p(\Z^\dimension)$ for $p>\frac{\dimension}{\dimension-\degree}$.
\end{conjecture}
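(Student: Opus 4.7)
\noindent\textbf{Proof proposal for Conjecture \ref{conjecture:higher_degree_strong_type}.}
The plan is to run the Magyar--Stein--Wainger template, with two inputs: the approximation formula coming from the Hardy--Littlewood circle method (with Wooley's bounds supplying an acceptable error term), and the continuous $\degree$-spherical maximal theorem on $\euclideanspace$.

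First I would apply the approximation formula announced in the introduction to write
\[
\arithmeticFT{\avgop}(\toruspoint) = \sum_{\unit/\modulus \in \majorarcs} \frac{\Gausssum}{\modulus^{\dimension}} \contFT{\spheremeasure_{\radius}}(\toruspoint - \unit/\modulus) \Psi_\modulus(\toruspoint - \unit/\modulus) + \arithmeticFT{\error}(\toruspoint),
\]
where $\Psi_\modulus$ is a smooth cutoff to a major arc of width comparable to $\radius^{1-\degree}$ about $\unit/\modulus$, and $\arithmeticFT{\error}$ is the minor-arc piece. Because $\dimension \geq \nonzeroasymptoticshold$, Wooley's bound from \cite{Wooley_efficient_congruencing1} gives an $\ell^2$-type gain of the shape $\Lpnorm{2}{\torus}{\arithmeticFT{\error}} \ismuchsmallerthan \radius^{-(\dimension-\degree)-\powersavings}$ for some $\powersavings > 0$. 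Together with the trivial $\ell^\infty$ bound on $\avgop$ and $\ell^2$-summability of $\error$ across dyadic radii, this is enough to absorb the minor-arc contribution to $\maxop$ in $\ell^p$ for every $p > \frac{\dimension}{\dimension-\degree}$.

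For the main (major-arc) piece I would rewrite each $\unit/\modulus$ summand through the Magyar--Stein--Wainger transference lemma: the multiplier $\modulus^{-\dimension}\Gausssum\,\contFT{\spheremeasure_\radius}(\cdot-\unit/\modulus)\Psi_\modulus$ factors as a Ramanujan-type arithmetic multiplier at scale $\inverse{\modulus}$ composed with a sampled continuous $\degree$-spherical convolution. Pushing the supremum over $\radius \in \acceptableradii$ inside then reduces matters, modulo this arithmetic factor, to the continuous maximal theorem for the unit $\degree$-sphere $\unithigherordersphere \subset \euclideanspace$. This hypersurface has the curvature needed to apply stationary phase, so the Stein--Bourgain bound yields boundedness of $\contmaxop$ on $L^p(\euclideanspace)$ for $p > \frac{\dimension}{\dimension-1}$; transference converts this into an $\ell^p$ bound on each single-$\modulus$ block, with a multiplicative gain coming from the Gauss sum estimate $\absolutevalueof{\Gausssum/\modulus^\dimension} \ismuchsmallerthan \modulus^{-\dimension/\degree+\epsilon}$.

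The subtle part --- and what I expect to be the main obstacle --- is summing over $\modulus$ while pushing the $\ell^p$ exponent all the way down to $\frac{\dimension}{\dimension-\degree}$ rather than the weaker $\frac{\dimension}{\dimension-1}$ that the continuous theorem alone produces. Following the quadratic template of \cite{MSW}, I would group major arcs dyadically in $\modulus$, form a square function across the dyadic blocks, and combine the Gauss sum bound with the surface-measure decay $\absolutevalueof{\contFT{\spheremeasure_\radius}(\freqpoint)} \ismuchsmallerthan (1+\absolutevalueof{\freqpoint})^{-(\dimension-1)/\degree}$ to obtain an $\ell^2\to\ell^2$ estimate at each dyadic scale $\modulus \uptoconstants \bigmodulus$ with the correct exponent. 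Interpolating this square function bound against the continuous maximal bound (for $p$ slightly above $\frac{\dimension}{\dimension-1}$) and the trivial $\ell^\infty$ bound should yield the strong-type inequality for every $p > \frac{\dimension}{\dimension-\degree}$; the restricted weak-type endpoint $p = \frac{\dimension}{\dimension-\degree}$ would then be handled by an Ionescu-type atomic argument. The precise value of the Wooley exponent defining $\nonzeroasymptoticshold$ is exactly what dictates whether each stage of this interpolation can be made sharp, and I anticipate any residual loss to appear at the square-function step.
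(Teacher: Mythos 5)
This statement is a \emph{conjecture} in the paper, not a theorem: the authors explicitly leave it open and remark that it ``requir[es] at least a full solution to Waring's problem.'' What the paper actually proves is Theorem~\ref{thm:Wooley_max_fxn_bound}, which gives the weaker range $p > \frac{\dimension}{\dimension - \degree^2(\degree-1)}$ under the much stronger dimensional hypothesis $\dimension > 2\degree^2(\degree-1)$, not $\dimension \geq \nonzeroasymptoticshold$. Your proposal cannot be checked against a proof in the paper because no such proof exists, and in fact your outline has the same gap that keeps this open.

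The central problem is in your second paragraph, where you assert that the $\ell^2$ power-saving estimate on $\error$, together with the trivial $\ell^\infty$ bound, is ``enough to absorb the minor-arc contribution to $\maxop$ in $\ell^p$ for every $p > \frac{\dimension}{\dimension - \degree}$.'' That interpolation goes the wrong way: combining $\ell^2$ with $\ell^\infty$ only controls the range $2 \leq p \leq \infty$, whereas the conjecture lives at $p$ close to $\frac{\dimension}{\dimension - \degree} < 2$. To get below $\ell^2$ one must interpolate the $\ell^2$ power gain with a uniform $\ell^p$ bound on the dyadic error, and the only such bound available (Theorem~\ref{thm:dyadic} and Corollary~\ref{cor:dyadic_minor_arcs_bound}) requires $p > \frac{\dimension}{\dimension - \degree/(2\powerloss)}$, with $\powerloss$ coming from a sup bound for Weyl sums. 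Even under Montgomery's conjectural $\powerloss = 1/\degree$ this only reaches $p > \frac{\dimension}{\dimension - \degree^2/2}$, which is still a factor of roughly $\degree$ away from the conjectured exponent; the paper explicitly notes this intrinsic loss of a factor of $\degree$ from using sup bounds rather than mean-value bounds. Your square-function/dyadic-$\modulus$ scheme in the third paragraph acts on the major-arc piece, but that piece is already under control (Lemma~\ref{maxHLop_lemma} gives $\maxHLop$ on $\ell^p$ for all $p > \frac{\dimension}{\dimension - \degree}$); no refinement there touches the minor-arc obstruction.

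Two smaller but genuine errors: first, the approximation formula is only established for $\dimension > 2\degree^2(\degree - 1)$ (or $\dimension > \max\{\degree(\degree+2), \degree/\powerloss\}$), so it is not available at the dimensional threshold $\dimension \geq \nonzeroasymptoticshold$ that you assume. Second, you invoke ``the Stein--Bourgain bound'' with range $p > \frac{\dimension}{\dimension - 1}$ for the continuous $\degree$-spherical maximal function; the hypersurface $\unithigherordersphere$ has vanishing Gaussian curvature at the poles and is only finite type of order $\degree$, so the Euclidean sphere result does not apply. The correct continuous input is Proposition~\ref{prop:cont_max_fxn_bound_for_higher_order_spheres} via Bruna--Nagel--Wainger and Rubio de Francia, which gives $p > \frac{2\dimension - 2 + \degree}{2(\dimension - 1)}$. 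That discrepancy does not happen to be the bottleneck, but it should be corrected.
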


\begin{conjecture}\label{conjecture:higher_degree_weak_type}
If $\dimension \geq \nonzeroasymptoticshold$, then $\maxop$ is bounded from $\ell^{\frac{\dimension}{\dimension-\degree},1}(\Z^\dimension)$ to $\ell^{\frac{\dimension}{\dimension-\degree},\infty}(\Z^\dimension)$.
\end{conjecture}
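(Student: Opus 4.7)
The plan is to adapt Ionescu's endpoint argument for $\degree = 2$ to higher degrees, using the approximation formula developed earlier in this paper together with Wooley's exponential sum bounds. I would begin with the Magyar--Stein--Wainger style decomposition of the multiplier $\arithmeticFT{A_\radius}$ into a major arc piece, concentrated near rationals $\unit/\modulus$ with $\modulus$ dyadically bounded by a small power of $\radius$, and a minor arc remainder. By the approximation formula, the major arc piece factors, up to an acceptable error, as the product of an arithmetic Gauss sum factor $\modulus^{-\dimension}\Gausssum$ and a smooth piece whose Fourier multiplier is essentially that of the continuous $\degree$-spherical average $\contavgop$ paired with a smoothed cutoff on the underlying arc.

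For the major arc contribution I would treat the continuous and arithmetic factors separately. On the continuous side, I would invoke a restricted weak-type bound at $p = \dimension/(\dimension-\degree)$ for the Euclidean $\degree$-spherical maximal function $\contmaxop$. On the arithmetic side, I would use Hua-type estimates on $\Gausssum$, organize the denominators $\modulus$ dyadically, and combine via the Magyar--Stein--Wainger transference lemma to lift the Euclidean endpoint into a discrete restricted weak-type estimate, with losses in $\modulus$ summable after pairing them against the Gauss sum savings. The minor arc contribution would be controlled by the approximation formula's error term (driven by Wooley's mean value theorem), which yields a strong-type bound on $\ell^p$ for every $p > \dimension/(\dimension-\degree)$ and, by interpolation with the trivial $\ell^\infty$ estimate, a restricted weak-type bound at the endpoint.

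The principal obstacle, I expect, will be establishing the Euclidean endpoint restricted weak-type bound for $\contmaxop$ when $\degree \geq 3$. The hypersurface $\sum_i |\spacepoint_i|^\degree = 1$ is convex but has Gaussian curvature vanishing along the coordinate axes, so Bourgain's original argument for the round sphere does not transfer directly. One would decompose the $\degree$-sphere into a piece of uniformly non-vanishing curvature away from the axes, where a Bourgain-style Littlewood--Paley and square function argument applies, together with pieces near the axes of lower effective dimension whose contributions are controlled either inductively or by a direct Bruna--Nagel--Wainger style analysis of finite-type surfaces. Once this Euclidean endpoint is in hand, the major and minor arc estimates can be glued with careful bookkeeping of losses in the Lorentz parameters to yield the conjectured $\ell^{\dimension/(\dimension-\degree),1} \to \ell^{\dimension/(\dimension-\degree),\infty}$ bound; a secondary difficulty is that the error term from the approximation formula depends on Wooley's exponent, and matching this against the Gauss sum savings uniformly across all dyadic scales of $\modulus$ and $\radius$ is the most delicate bookkeeping step.
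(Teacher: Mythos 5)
The statement you are attempting to prove is labeled a \emph{conjecture} in the paper, and the paper does not prove it. The Remark at the end of Section~\ref{section:maximal_theorem_proof} states explicitly that Conjectures~\ref{conjecture:higher_degree_strong_type} and \ref{conjecture:higher_degree_weak_type} ``are very difficult (requiring at least a full solution to Waring's problem).'' Even the dimension threshold $\nonzeroasymptoticshold$ in the hypothesis is, for $\degree \geq 3$, an open problem; Wooley only gives the upper bound $\nonzeroasymptoticshold \leq 2\degree^2 + 2\degree - 3$, while the conjecture asks for the true, unknown value. Your plan therefore cannot be compared to a proof in the paper, because no such proof exists or is claimed.

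Beyond that, your proposal misidentifies the principal obstruction. You locate it in an ``Euclidean endpoint restricted weak-type bound'' for $\contmaxop$ at $p=\dimension/(\dimension-\degree)$, but the paper shows that the continuous side is already not the bottleneck: the Rubio de Francia range $p>\frac{2\dimension-2+\degree}{2(\dimension-1)}$, while non-sharp for $\degree$-spheres, sits strictly \emph{below} $\frac{\dimension}{\dimension-\degree}$ whenever $\dimension > \degree$, so the continuous estimates in Section~\ref{section:cont_max_fxns} already cover the conjectured range. The real barrier is arithmetic. Your plan says the error term in The Approximation Formula is ``driven by Wooley's mean value theorem''; in fact the paper's error estimate \eqref{error_bound} is driven by the \emph{sup-norm} hypothesis $\hypothesis{\powerloss}$ on the minor arcs, and the paper is explicit that this is where the factor of $\degree$ is lost. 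Section~\ref{section:general_thm} observes that even Montgomery's \emph{conjectural} optimal sup bound $\powerloss = \degree^{-1}$ would only give $p > \frac{\dimension}{\dimension-\degree^2/2}$, which is ``still an order of $\degree$ away from the conjectured endpoint because a factor of $\degree$ is lost by using sup bounds,'' and that ``regrettably, our method does not exploit'' the mean-value technique from Waring's problem that overcomes this loss. So what you describe as ``the most delicate bookkeeping step'' --- matching the error term against Gauss sum savings uniformly in $\modulus$ and $\radius$ --- is not bookkeeping at all: it is a fundamental gap that persists even under the strongest conjectural inputs your method admits. Establishing Conjecture~\ref{conjecture:higher_degree_weak_type} by this route would require qualitatively new estimates, most plausibly a mean-value or $\ell^{p}$-restriction analogue replacing the pointwise minor-arc bound, which neither the paper nor your proposal supplies.
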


\begin{rem}
By interpolation with the trivial bound for $\maxop$ on $\ell^\infty(\Z^\dimension)$, Conjecture 2 implies Conjecture~1.
\end{rem}

\subsection{Summary of results}
An important and novel ingredient in Magyar--Stein--Wainger's result is their approximation formula. We extend this to higher degrees as in \cite{Magyar_ergodic}, but now take advantage of refined knowledge for exponential sums and oscillatory integrals related to $\degree$-spheres.
%
\begin{approximationlemma}
Let $\charfxnofarsphere(\latticepoint)$ be the characteristic function of $\arithmetichigherordersphere$ on $\lattice$. If $\degree \geq 3$ and $\dimension>2\degree^2(\degree-1)$, then for $\toruspoint \in \torus$
\[
\arithmeticFT{\charfxnofarsphere}(\xi) 
= \sum_{\modulus=1}^{\infty} \sum_{\unit \in \unitsmod{\modulus}} \eof{ \frac{\unit \radius^\degree}{\modulus} } \sum_{\latticepoint \in \lattice} \twistedGausssum{\latticepoint} \Psi(\modulus \xi - \latticepoint) \contFT{d\surfacemeasure_\radius}(\modulus \xi - \latticepoint) + \arithmeticFT{\error}(\xi)
\]
and 
\[
\lpnorm{2}{\disup{\dyadicradius}{\radius}{\error}}
\lesssim \dyadicradius^{\dimension-\degree-\powersavings} 
\]
for some $\powersavings(\degree,\dimension)>0$. 
\end{approximationlemma}
Here and throughout the paper, 
$\twistedGausssum{\latticepoint} := \modulus^{-\dimension} \sum_{b \in \Zmod{\modulus}} \eof{\frac{\unit b^\degree + b \cdot \latticepoint}{\modulus}}$ 
is the Gauss sum of degree $\degree$ while $\surfacemeasure_\radius$ is the Gelfand--Leray measure on the continuous $\degree$-sphere 
$\conthigherordersphere := \inbraces{x \in \R^\dimension : \sum_{i=1}^{\dimension} \absolutevalueof{x_i}^\degree = \radius^\degree}$ 
with $\contFT{d\surfacemeasure_\radius}$ as its $\R^\dimension$-variable Fourier transform. $\Psi$ is a smooth function supported in $\inbrackets{-1/4, 1/4}$ and 1 in $\inbrackets{-1/8, 1/8}$. The approximation lemma says that we can approximate the $\lattice$-variable Fourier transform of the arithmetic surface measure of a $\degree$-sphere as a weighted sum of pieces of a localized $\R^\dimension$-variable Fourier transform of the continuous $\degree$-sphere with $\error$, an error term that has a power saving in the radius.

\begin{thm}\label{thm:Wooley_max_fxn_bound}
If $\degree \geq 3$ and $\dimension>2\degree^2(\degree-1)$, then $\maxop$ is bounded on $\ell^p(\Z^\dimension)$ for $p>\frac{\dimension}{\dimension - \degree^2(\degree-1)}$.
\end{thm}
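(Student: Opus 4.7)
The plan is to apply the Approximation Formula to split $\averagefxn$ into a main term handled by Magyar--Stein--Wainger-style transference, and an error term handled via the $\ell^{2}$ power saving from the Approximation Lemma combined with interpolation. Concretely, first decompose the kernel $\charfxnofarsphere/\numberoflatticepoints(\radius) = M_{\radius}/\numberoflatticepoints(\radius) + \error/\numberoflatticepoints(\radius)$, where $M_{\radius}$ is a weighted sum, indexed by reduced fractions $\unit/\modulus$ and lattice points $\latticepoint$ with Gauss-sum weights $\twistedGausssum{\latticepoint}$, of localized pieces of the continuous surface measure $\surfacemeasure_{\radius}$ on $\conthigherordersphere$, and estimate the two resulting maximal operators separately.

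For the main-term maximal, the Magyar--Stein--Wainger transference principle---combined with Weyl-type decay of $\twistedGausssum{\latticepoint}$ in $\modulus$---reduces the $\ell^{p}(\lattice)$ bound to an $L^{p}(\euclideanspace)$ bound for the continuous spherical maximal function $\contmaxop$ associated to $\conthigherordersphere$; since $\conthigherordersphere$ is a smooth compact hypersurface of finite type, this operator is bounded on $L^{p}(\euclideanspace)$ in a range comfortably containing the one claimed in the theorem. For the error-term maximal, the Approximation Lemma at dyadic scale $\dyadicradius$ furnishes
\[
\lpnorm{2}{\disup{\dyadicradius}{\radius}{\error/\numberoflatticepoints(\radius)}} \lesssim \dyadicradius^{-\powersavings},
\]
which via Young's inequality gives an $\ell^{1}\to\ell^{2}$ bound of size $\dyadicradius^{-\powersavings}$ on the dyadic error maximal. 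Paired with a trivial $\ell^{p}\to\ell^{p}$ bound of size $\dyadicradius^{\gamma}$---obtained by summing uniform $\ell^{p}$-norms of $\averagefxn$ and $(M_{\radius}/\numberoflatticepoints(\radius))\convolution\fxn$ over the $O(\dyadicradius^{\degree})$ admissible radii in $[\dyadicradius,2\dyadicradius)$---Riesz--Thorin interpolation produces an $\ell^{p}\to\ell^{p}$ operator bound of the form $\dyadicradius^{\beta(p)}$ that is summable over $\dyadicradius = 2^{j}$ precisely when $p > \dimension/(\dimension-\degree^{2}(\degree-1))$. The exponent $\degree^{2}(\degree-1)$ here reflects the sharpest value of $\powersavings$ available from Wooley's Vinogradov-type bounds feeding into the Approximation Lemma.

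Since $\degree^{2}(\degree-1) \geq \degree$, the binding constraint is the error-term threshold, giving $\maxop$ bounded on $\ell^{p}(\lattice)$ for $p > \dimension/(\dimension-\degree^{2}(\degree-1))$. I expect the hard part to be the error-term interpolation: extracting exactly the exponent $\degree^{2}(\degree-1)$ requires precise bookkeeping of how $\powersavings$ depends on $\degree$ and $\dimension$ through Wooley's exponents, and a judicious choice of secondary endpoint so that the dyadic sum converges at the claimed threshold. A secondary subtlety is that, unlike the round sphere, the continuous $\degree$-sphere has curvature degenerating along the coordinate axes, so boundedness of $\contmaxop$ requires finite-type hypersurface analysis rather than a direct application of Stein's spherical maximal theorem.
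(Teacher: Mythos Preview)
Your overall architecture---split $A_r = C_r + (\text{error})$, handle $C_*$ by Magyar--Stein--Wainger transference plus the continuous $k$-spherical maximal theorem, then control the error by interpolating the $\ell^2$ power saving against a second endpoint and summing over dyadic scales---matches the paper. The main-term analysis is essentially right, including your remark about finite-type curvature.

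The gap is in the error-term interpolation. Your second endpoint is a ``trivial'' $\ell^p\to\ell^p$ bound of size $R^{k}$ obtained by summing the $O(R^{k})$ radii in $[R,2R)$. For this to deliver the threshold $p>d/(d-k^{2}(k-1))$ when interpolated against the $\ell^2$ saving $R^{-\nu}$ with $\nu=d\theta-k$ and $\theta=\tfrac{1}{2k(k-1)}$, you need that crude bound at $p_0=1$; a short computation shows that taking $p_0$ at or above $d/(d-k)$ yields a strictly worse threshold. But at $p_0=1$ the main term $C_r$ is \emph{not} uniformly bounded: it is an infinite sum over moduli $q$ with Gauss-sum weights, and on $\ell^1$ the Gauss-sum decay $q^{-(d/k)(2-2/p)}$ collapses to $q^{0}$, so the tail $\sum_{q}\sum_{a}C_r^{a/q}$ diverges in $\ell^1\to\ell^1$ operator norm. (Your ``$\ell^1\to\ell^2$ via Young'' step is also not right as stated: the Approximation Formula gives an $\ell^2\to\ell^2$ bound on the dyadic error maximal, not a kernel $\ell^2$ bound that Young could convert.) The paper closes this gap by proving, as a separate theorem, that the dyadic maximal operator $A_R=\sup_{R\le r<2R}|A_r|$ is \emph{uniformly} bounded on $\ell^p$ for exactly the target range $p>d/(d-k^2(k-1))$; this requires running the major/minor arc analysis at the $\ell^p$ level, and in particular the minor-arc $\ell^p$ bound $\|\sup_R A_r^{\mathrm{minor}}f\|_p\lessapprox R^{k-d\theta(2-2/p)}\|f\|_p$ is precisely what produces the exponent $k^2(k-1)$. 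With that uniform $\ell^p$ bound in hand (and the analogous bound for $C_R$), the dyadic error maximal is uniformly bounded on $\ell^p$, and interpolation with the $\ell^2$ saving gives $R^{-\epsilon(p)}$, which sums. So the missing ingredient is not bookkeeping of Wooley's exponent---it is the uniform dyadic $\ell^p$ theorem itself.
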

We refine this to a restricted weak-type endpoint result in a subsequent paper. 
%
%
In \cite{Magyar_ergodic}, Magyar also investigated related ergodic theorems. We improve his results for $\degree$-spheres. 
Suppose that we have a probability space $\measurespace$ with measure $\measure$ and a strongly ergodic (commuting) family $\measurepreservingtransform = (\measurepreservingtransform_1, \dots, \measurepreservingtransform_\dimension)$ of invertible measure preserving transformations. We use these transformations to define actions on $\measurespace$ and functions. For a function $\mfxn:\measurespace \to \C$, define the \emph{$\degree$-spherical average of radius} $\radius$ as 
\[
\arithmeticergodicsphericalaverage \mfxn(x) 
:= \inverse{N_{\dimension,\degree}(\radius)} \sum_{n \in \arithmetichigherordersphere} \mfxn(\measurepreservingtransform^n x)
.
\]
Using Theorem~\ref{thm:Wooley_max_fxn_bound} and proving an oscillation inequality similar to equation (6.6) in \cite{Magyar_ergodic}, we prove a pointwise ergodic theorem.
\begin{thm}
If $\degree \geq 3$, $\dimension > 2\degree^2(\degree-1)$ and $f \in L^p(\measurespace,\measure)$ for some $p>\frac{\dimension}{\dimension - \degree^2(\degree-1)}$, then 
\[
\lim_{\radius \to \infty} \arithmeticergodicsphericalaverage f(x) = \int_\measurespace f \, d\measure
\]
for $\measure$-a.e. $x\in \measurespace$.
\end{thm}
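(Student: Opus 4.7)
The plan is the standard three-step Calderón scheme: (i) transfer the $\ell^p$-maximal inequality to the ergodic setting, (ii) prove pointwise convergence along a lacunary subsequence on a dense subspace of $L^p(\measurespace,\measure)$ and lift it to the full sequence via an $L^2$-oscillation inequality, and (iii) conclude by a density argument. By the Calderón transference principle applied to the commuting measure-preserving $\Z^\dimension$-action, Theorem~\ref{thm:Wooley_max_fxn_bound} implies that the ergodic maximal operator $\arithmeticergodicsphericalmaximal f := \sup_{\radius \in \acceptableradii} \absolutevalueof{\arithmeticergodicsphericalaverage f}$ is bounded on $L^p(\measurespace,\measure)$ for the same range $p > \dimension/(\dimension-\degree^2(\degree-1))$; the transference argument is identical to the one used in \cite{MSW} and \cite{Magyar_ergodic}.

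Next, I reduce pointwise convergence along $\acceptableradii$ to convergence along the lacunary subsequence $\radius_j := 2^j$ (intersected with $\acceptableradii$) by proving the $L^2$-oscillation inequality
\[
\Bigl\| \Bigl( \sum_{j=0}^{J-1} \sup_{\radius_j \leq \radius < \radius_{j+1}} \absolutevalueof{\arithmeticergodicsphericalaverage f - \discreteergodicsphericalaverage{\radius_j} f}^2 \Bigr)^{1/2} \Bigr\|_{L^2(\measurespace)} \lesssim J^{1/2 - \powersavings'} \Lpnorm{2}{\measurespace}{f}
\]
for some $\powersavings' = \powersavings'(\degree, \dimension) > 0$, transferred from its $\ell^2(\lattice)$ analogue as in \cite{Magyar_ergodic}. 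The lattice inequality follows by substituting the Approximation Formula and exploiting the power-saving error bound $\lpnorm{2}{\disup{\radius_j}{\radius}{\error}} \lesssim \radius_j^{\dimension-\degree-\powersavings}$: after normalizing by $\numberoflatticepoints(\radius) \uptoconstants \radius_j^{\dimension-\degree}$, the error term contributes a geometrically summable tail, so the nontrivial content is the dyadic oscillation of the main term.

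Pointwise convergence along $\radius_j$ on a dense subclass is obtained by spectral analysis of the main term. Via the Fourier multiplier of $\arithmeticergodicsphericalaverage$ (after inserting the Approximation Formula, summing the Gauss-sum factors weighted by $\eof{\unit\radius^\degree/\modulus}$, and using the decay of $\contFT{d\surfacemeasure_\radius}$ as $\radius \to \infty$), one shows that the operator converges strongly in $L^2(\measurespace,\measure)$ to the spectral projection onto the $\measurepreservingtransform$-invariant subspace; under strong ergodicity this equals the constant $\int_\measurespace f \, d\measure$, and pointwise convergence on the dense subclass of finite linear combinations of joint eigenfunctions of $\measurepreservingtransform_1,\dots,\measurepreservingtransform_\dimension$ follows directly from the explicit spectral representation. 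Combined with the oscillation inequality, this gives pointwise convergence on the dense subclass along the full sequence $\radius \in \acceptableradii$, and the maximal inequality from the first step then extends the convergence to all of $L^p(\measurespace,\measure)$ by a standard $\varepsilon/3$ argument.

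The main obstacle I expect is the quantitative oscillation of the main-term multiplier. The power saving in $\error$ handles the error term automatically, but one must bound the dyadic oscillation of the main-term multiplier by $2^{-\powersavings' j}$ in an averaged $\ell^2$-sense, which requires both the arithmetic cancellation in $\twistedGausssum{\latticepoint}$ afforded by the hypothesis $\dimension > 2\degree^2(\degree-1)$ (via Wooley's efficient congruencing estimates) and the $\radius$-derivative decay of $\contFT{d\surfacemeasure_\radius}$ from stationary phase. This is the analogue of inequality (6.6) of \cite{Magyar_ergodic} in the improved dimensional range, and it is the step where the quantitative gain of our Approximation Formula over the prior $L^2$-only regime is essential.
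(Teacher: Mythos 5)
Your proposal correctly identifies the overall architecture (Calder\'on transference for the maximal inequality, an oscillation inequality to reduce to a dense subclass, then a density argument), and this mirrors the paper. However, there are two substantive gaps.

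\textbf{The dense subclass is wrong.} You propose to verify pointwise convergence on ``finite linear combinations of joint eigenfunctions of $\measurepreservingtransform_1, \dots, \measurepreservingtransform_\dimension$'' and then extend by density. For a general commuting system of measure preserving transformations, the span of joint eigenfunctions is \emph{not} dense in $L^2(\measurespace, \measure)$: in a weakly mixing system the only eigenfunctions are constants. The paper instead reduces to mean-zero functions in $L^\infty(\measurespace)$ (which is dense) and runs a contradiction argument: if convergence failed on a set of positive measure, one could extract a sequence of radii $\radius_j \in \acceptableradii$ along which a truncated maximal operator $\truncatedarithmeticergodicsphericalmaximal{j}$ stays large, contradicting the oscillation inequality $\Lpnorm{2}{\measurespace}{\truncatedarithmeticergodicsphericalmaximal{j} \mfxn} \lesssim \epsilon \Lpnorm{2}{\measurespace}{\mfxn}$ for $j > J(\epsilon)$.

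\textbf{The power-saving Bourgain-type oscillation inequality does not hold.} You claim a lattice oscillation inequality with constant $J^{1/2 - \powersavings'}$, $\powersavings' > 0$ depending only on $\degree$ and $\dimension$, and that the ``main obstacle'' is bounding the dyadic oscillation of the main-term multiplier by $2^{-\powersavings' j}$. This cannot be proved in the manner you describe, because the multiplier $\HLmultiplier_{\radius}^{\Fareyfraction}$ contains the phase $\eof{\radius^\degree \unit/\modulus}$ which, for $\modulus \geq 2$, oscillates fully over \emph{every} dyadic block of radii. Thus at a rational frequency $\freqpoint = \latticepoint/\modulus$ the per-block oscillation is $\Theta(\modulus^{-\dimension/\degree})$, and the square-function accumulates a contribution $\sim \sqrt{J}\, \modulus^{-\dimension/\degree}$ which is not $o(\sqrt J)$. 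This is precisely why strong ergodicity is essential: it is used in the paper (via the spectral theorem) to guarantee that the spectral measure $\spectralmeasure_\mfxn$ assigns zero mass to rational points, so these bad frequencies are never ``sampled.'' The paper's actual oscillation inequality is \emph{qualitative}, not power-saving: $J(\epsilon)$ depends on $\epsilon$, on $\mfxn$ through its spectral measure, and on the extracted sequence $(\radius_j)$. The concrete mechanism is a decomposition of the main term into high and low frequency parts; the high-frequency part does decay at a power rate $\radius_j^{-(\dimension-1)/\degree}$ by the Bruna--Nagel--Wainger bound, but the low-frequency part is handled by the delta-sequence argument and the facts $\spectralmeasure_{\mfxn}(0) = 0$ (mean zero) and $\spectralmeasure_{\mfxn}(\Q^\dimension) = 0$ (strong ergodicity), which give vanishing without any uniform rate. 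Note also that nowhere in your proposal do you use the strong ergodicity hypothesis in a load-bearing way, yet the theorem is false without it.
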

\begin{rem}
The ranges $\dimension > 2\degree^2(\degree-1)$ and $p>\frac{\dimension}{\dimension - \degree^2(\degree-1)}$ in Theorem~\ref{thm:Wooley_max_fxn_bound} are chosen for aesthetic reasons. Theorem~\ref{thm:Vinogradov_max_thm} gives a more flexible version of Theorem~\ref{thm:Wooley_max_fxn_bound} which depends crucially on supremum bounds for a certain class of exponential sums. We phrase our bounds using a hypothesis on these exponential sums that is based on works in Waring's problem. Then Theorem~\ref{thm:Wooley_max_fxn_bound} is deduced by using Wooley's sup bound, \cite{Wooley_efficient_congruencing1}, Theorem 1.5, p. 5. There are immediate improvements for $\degree \geq 4$. We discuss the best currently known results and conjectural limitations of our method in Section~\ref{section:general_thm}. 
\end{rem}
%

\subsection{Structure of the paper}
In section~\ref{section:notations}, we briefly mention notation used througout the paper. We hope that there will be a mix of readers from harmonic analysis, ergodic theory and number theory, so we try to make the exposition as self-contained as possible. 
In section~\ref{section:cont_max_fxns}, we discuss continuous maximal functions over hypersurfaces and derive bounds for continuous $\degree$-spherical maximal functions. 
In section~\ref{section:MSW_machinery}, we state some of the important machinery in \cite{MSW}; in particular we will need the Magyar--Stein--Wainger transference principle in order to exploit the machinery in section~\ref{section:cont_max_fxns}. 
In section~\ref{section:general_thm}, we introduce a hypothesis for exponential sums that allows us to generalize The Approximation Formula and Theorem~\ref{thm:Wooley_max_fxn_bound}. We state Wooley's recent bounds in \cite{Wooley_efficient_congruencing1}. Theorem~\ref{thm:Wooley_max_fxn_bound} will then be an immediate application of Wooley's bounds and Theorem~\ref{thm:Vinogradov_max_thm}. 
In section~\ref{section:dyadic}, we study the dyadic maximal operators. 
In section~\ref{section:approximation_formula}, we prove The Approximation Formula of section~\ref{section:general_thm}. 
In section~\ref{section:maximal_theorem_proof}, we combine the analysis in sections \ref{section:dyadic} and \ref{section:approximation_formula} to prove Theorem~\ref{thm:Vinogradov_max_thm}. 
In section~\ref{section:ergodic}, we use Theorem~\ref{thm:Vinogradov_max_thm} to a mean $L^2$ ergodic theorem and our pointwise ergodic theorem for $\degree$-spherical averages. 

\section{Notations}\label{section:notations}
Before discussing the machinery in proving Theorem~\ref{thm:Wooley_max_fxn_bound}, we introduce some notation. Our notation will be a mix of notations from analytic number theory and harmonic analysis. Most of our notation is standard, but there are a few differences based on aesthetics. 

\begin{itemize}
\item The torus $\torus$ may be identified with any box in $\euclideanspace$ of sidelengths 1, for instance $[0,1]^\dimension$ or $[-1/2,1/2]^\dimension$.
\item $\eof{t}$ will denote the character $e^{2 \pi i t}$ for $t \in \R, \Z$ or $\T$.
\item We use the non-standard notation of $\Zmod{\modulus} = \Z/ \modulus \Z$ which we identify with the set $\inbraces{1, \cdots, \modulus}$ and $\unitsmod{\modulus}$ is the group of units in $\Zmod{\modulus}$. 
\item For $\degree \in \N$, define $\absolutevalueof{x}^{\degree} := \sum_{i=1}^\dimension \absolutevalueof{x_i}^{\degree}$ if $x \in \euclideanspace$, $\lattice$ or $\torus$ and use the dot product $\cdot$. Furthermore, we abuse notation by writing $\absolutevalueof{b}^\degree$ to mean $\sum_{i=1}^{\dimension} b_i^\degree$ for $b \in \Zmod{\modulus}^{\dimension}$ and the dot product notation $b \cdot m$ to mean $\sum_{i=1}^{\dimension} b_i m_i$ for $b, m \in \Zmod{\modulus}$. 
\item For two functions $f, g$, $f \lesssim g$ if $\absolutevalueof{f(x)} \leq C \absolutevalueof{g(x)}$ for some constant $C>0$. $f(x) \lessapprox x^n g(x)$ if $\absolutevalueof{f(x)} \leq C_\epsilon |x|^{n+\epsilon} |g(x)|$ for each $\epsilon >0$ with $C_{\epsilon}$ depending on $\epsilon$. $f$ and $g$ are comparable $f \uptoconstants g$ if $f \lesssim g$ and $g \lesssim f$. Finally, we may use $f \ll g$ if $|f|$ is much smaller than $|g|$. 
All constants throughout the paper may depend on dimension $\dimension$ and degree $\degree$. 
\item 
If $f: \euclideanspace \to \C$, then we define its Fourier transform by $\contFT{\fxn}(\xi) := \int_{\euclideanspace} f(x) e(x \cdot \xi) dx$ for $\xi \in \euclideanspace$. 
If $f: \torus \to \C$, then we define its Fourier transform by $\torusFT{\fxn}(\latticepoint) := \int_{\torus} f(x) e(-\latticepoint \cdot x) dx$ for $\latticepoint \in \lattice$. 
If $f:\lattice \to \C$, then we define its Fourier transform by $\latticeFT{\fxn}(\xi) := \sum_{\latticepoint \in \lattice} f(\latticepoint) e(n \cdot \xi)$ for $\xi \in \torus$. 
\end{itemize}

We introduce several related convolution operators such as the averaging operators $\contavgop$, $\avgop$ and $\arithmeticergodicsphericalaverage$ defined on $\euclideanspace$, $\lattice$ and a measure space $\measurespace$, respectively. The averages are intimately connected to one another and we will distinguish them by using mathcal font for an operator on $\euclideanspace$, normal font for operators on $\lattice$ and mathfrak font for operators on $\measurespace$.  

\section{Estimates for continuous maximal functions over hypersurfaces}\label{section:cont_max_fxns}
In this section we discuss the continuous analogues of our arithmetic $\degree$-spherical maximal functions. There is a wide literature on continuous maximal functions over hypersurfaces. We discuss two results, one due to Bruna--Nagel--Wainger and another due to Rubio de Francia. We then apply them to deduce $L^p(\euclideanspace)$ estimates for continuous $\degree$-spherical maximal functions in Proposition~\ref{prop:cont_max_fxn_bound_for_higher_order_spheres}. In section~\ref{section:approximation_formula} these estimates will be applied later using the Magyar--Stein--Wainger transference principle.

\subsection{Measures on hypersurfaces}
%
There are several natural measures associated with a hypersurface $\surface \subset \euclideanspace$. Before we can state the necessary results, we need to be precise about which surface measure we are using. We suppose that the hypersurface, $\surface = \inbraces{ x \in \euclideanspace : \Phi(x) = 0 }$, is defined by a function $\Phi : \euclideanspace \to \R$ such that $\Phi$ is non-singular, that is, $\gradient \Phi(x) \not= 0$ for $x \in \surface$. 
We use the \emph{Gelfand--Leray measure} which is defined as the unique form $d\surfacemeasure_{\Phi}$ such that
\[
d\Phi \wedge \GLform_\Phi 
= dx_1 \wedge \dots \wedge dx_\dimension
\]
where $d\Phi = \sum_{i=1}^{\dimension} \partial_i \Phi \, dx_i$. 
The Gelfand--Leray measure is equal to the induced Lebesgue measure and the Dirac delta-measure restricted to $\surface$ (for definitions of these measures see \cite{SteinHA}, page 498). The Gelfand--Leray measure is also equal to an appropriately normalized Euclidean surface measure -- see Proposition 2 in \cite{Magyar_ergodic}. However, most important for us is the distributional description of the Gelfand--Leray measure given now. 
\begin{fact}
If $\bumpfxn$ is a Schwartz function on $\R$ and $f$ is a Schwartz function on $\euclideanspace$, then 
\[
\int_{\euclideanspace} f(x) \GLform_{\Phi}(x) 
= \lim_{\epsilon \to 0} \int_{\R} \int_{\euclideanspace} \bumpfxn(\epsilon t) f(x) e(\Phi(x)t) \, dx \, dt 
. 
\]
\end{fact}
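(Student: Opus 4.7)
The plan is to interpret the right-hand side as a regularized Fourier inversion applied to the pushforward distribution $\Phi_{*}(f \, dx)$. For Schwartz $f$, define the tempered distribution $F \in \mathcal{S}'(\R)$ by
\[
\langle F, \psi \rangle := \int_{\euclideanspace} f(x) \, \psi(\Phi(x)) \, dx, \qquad \psi \in \mathcal{S}(\R).
\]
The inner integral on the right-hand side of the fact is then precisely the Fourier transform of $F$:
\[
g(t) := \int_{\euclideanspace} f(x) \eof{\Phi(x) t} \, dx = \contFT{F}(t).
\]

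The first step, which is the analytical heart of the argument, is to show that $F$ is represented by a continuous bounded function on a neighborhood of $s = 0$ with $F(0) = \int_{\euclideanspace} f(x) \, \GLform_{\Phi}(x)$. Since $\gradient \Phi \neq 0$ on $\surface = \{\Phi = 0\}$, the implicit function theorem lets one use $\Phi$ as one of the local coordinates near $\surface$, and the defining relation $d\Phi \wedge \GLform_\Phi = dx_1 \wedge \cdots \wedge dx_\dimension$ becomes a Fubini statement: locally, $F(s) = \int_{\{\Phi = s\}} f \, \GLform^{(s)}_\Phi$ where $\GLform^{(s)}_\Phi$ is the Gelfand--Leray measure on the level set $\{\Phi = s\}$. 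A partition of unity glues these local descriptions together, and the Schwartz decay of $f$ controls the contribution away from $\surface$; for compact surfaces such as the $\degree$-spheres of the paper, the argument reduces to a single chart.

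The remainder is soft Fourier analysis. Parseval's identity extended to the tempered distribution $F$, the scaling $\contFT{\bumpfxn(\epsilon\,\cdot)}(s) = \epsilon^{-1} \contFT{\bumpfxn}(s/\epsilon)$, and the change of variable $s = \epsilon u$ give
\[
\int_{\R} \bumpfxn(\epsilon t) \, g(t) \, dt = \int_{\R} \contFT{\bumpfxn}(u) \, F(\epsilon u) \, du,
\]
and dominated convergence against the Schwartz function $\contFT{\bumpfxn}$ yields
\[
\lim_{\epsilon \to 0} \int_{\R} \bumpfxn(\epsilon t) \, g(t) \, dt = F(0) \int_{\R} \contFT{\bumpfxn}(u) \, du = F(0) \, \bumpfxn(0)
\]
by Fourier inversion at the origin. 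Under the customary normalization $\bumpfxn(0) = 1$ for such a regularizing bump, this is exactly the claimed identity. The main obstacle is the first step; everything else is essentially automatic once $F$ has been upgraded to a continuous, uniformly bounded function on a neighborhood of $0$.
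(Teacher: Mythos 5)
Your proof is correct and follows essentially the same route the paper alludes to: the "change of variables" is exactly your coarea-formula identification of the pushforward density $F(s) = \int_{\{\Phi = s\}} f \, d\sigma^{(s)}_\Phi$ (so $F(0) = \int f\, d\sigma_\Phi$), and the "Fourier inversion theorem" is your identity $\int_\R \contFT{\bumpfxn}(u)\, du = \bumpfxn(0)$. The Parseval/pushforward framing is just a clean way to organize the same two ingredients.

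Two small points worth tightening. First, dominated convergence in the line $\int_\R F(\epsilon u)\contFT{\bumpfxn}(u)\,du \to F(0)\int_\R \contFT{\bumpfxn}$ needs a majorant valid for \emph{all} $u$, not just small $\epsilon u$; local boundedness of $F$ near $s=0$ is not quite enough on its own. The fix is routine: either observe that $F$ is globally bounded (true for the $k$-spheres of the paper when $\dimension > \degree$, since the level sets for $s < -r^\degree$ are empty and the density near the degenerate level scales like $(s+r^\degree)^{(\dimension-\degree)/\degree}$), or split the integral at $|\epsilon u| = \delta$ with $\nabla\Phi \neq 0$ on $\{|\Phi| \leq \delta\}$, using the rapid decay of $\contFT{\bumpfxn}$ to kill the far part. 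Second, you are right that the conclusion is $F(0)\bumpfxn(0)$, so the statement genuinely requires $\bumpfxn(0)=1$; the paper's phrasing omits this hypothesis, though its canonical choice $\bumpfxn(x) = e^{-2\pi|x|^\degree}$ does satisfy it. Catching that normalization constraint is a useful observation, not a defect of your argument.
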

In particular, this is true if we take $\bumpfxn(x) = e^{-2\pi \absolutevalueof{x}^\degree}$. 
The proof of this fact follows from a change of variables and the Fourier inversion theorem. For the details of the proof, see Lemma 2 in \cite{Magyar_ergodic}. 
%

\subsection{$L^p$ bounds for maximal functions over hypersurfaces}
Let $\sphere{\radius}$ be the sphere of radius $\radius$ in $\R^\dimension$ with surface area measure $d\spheremeasure_{\radius}$ normalized to have $\spheremeasure_{\radius}(\sphere{\radius}) = \spheremeasure_{1}(\sphere{1})$ for all $\radius > 0$. 
For a continuous function $\fxn$ with compact support, define the spherical average of radius $\radius$ by
\[
\sphereaverage{\radius}{\fxn}(x) 
:= \fxn \convolvedwith \spheremeasure_{\radius} (x)
= \int_{\sphere{1}} \fxn(x-\radius y) \; d\spheremeasure_{1}(y) 
\]
and the spherical maximal function
\[
\spheremaximal{\fxn} 
:= \sup_{\radius>0} \absolutevalueof{\sphereaverage{\radius}{\fxn} }
.
\]
E. Stein was the first to investigate the spherical maximal function, proving boundedness on the sharp range of $L^p$ spaces when $\dimension \geq 3$. 
\begin{Stein}[1976]
In dimensions $\dimension \geq 3$, the spherical maximal function is bounded on $L^p(\R^\dimension)$ when $p>\frac{\dimension}{\dimension-1}$.
\end{Stein}
\begin{rem}
The spherical maximal function is unbounded on $L^p$ for $p \leq \frac{\dimension}{\dimension-1}$. This can be seen by considering the characteristic function of the unit cube, a delta mass at the origin, or the scale invariant version $f(x):= \absolutevalueof{x}^{1-\dimension} \log{ \absolutevalueof{x}^{-1}}$ if $\absolutevalueof{x} \geq 1$ and $f(x)=0$ otherwise.
\end{rem}
%
%

In this section, we are interested in generalizations of Stein's spherical maximal theorem to hypersurfaces. Let $\surface$ be a smooth, convex hypersurface of finite type in $\R^{\dimension}$ for $\dimension \geq 3$ and $\surfacemeasure$ be its Gelfand--Leray measure; we say that a hypersurface is convex if the body it bounds is convex, and a hypersurface is finite type if at each point $x \in \surface$, all tangent lines at $x$ make finite order of contact. Furthermore, let $\amplitudefxn(x)$ be a smooth, positive, compactly supported function on $\surface$ ($\amplitudefxn$ is arbitrary, but all implicit constants below may depend on it). Similarly to the spherical maximal function, define the averages and maximal function respectively by 
\[
\contaveragefxn(x) 
:= \int_\surface \fxn(x-\radius y) \amplitudefxn(x) \, d\surfacemeasure(y)
\]
and 
\[
\contmaxfxn(x) 
:= \sup_{\radius >0} \absolutevalueof{ \contaveragefxn(x) }
.
\]
%
%

Nontrivial $L^p \to L^p$ estimates for the maximal function $\contmaxfxn$ occur when the surface $\surface$ satisfies a curvature condition such as everywhere positive Gaussian curvature in the case of the sphere or the finite type condition. Such a curvature condition is reflected in the decay of the Fourier transform of the surface measure which implies that the associated maximal function is bounded on a range of $L^p$ spaces. 
A surface can fail to have nontrivial $L^p$ bounds when the surface is very flat -- see \cite{CowlingMauceri} for more details. 

Below we choose two exemplary theorems to prove the boundedness of the continuous $\degree$-spherical maximal function. The first theorem, due to Bruna--Nagel--Wainger, relates the decay estimates of the Fourier transform of a surface measure to the curvature of the surface. For $\spacepoint \in \surface$, denote the outward unit normal to $\surface$ at $\spacepoint$ by $\normalat{\spacepoint}$ and $\tangentspaceat{\spacepoint}$ to be the tangent plane at $\spacepoint$. For $\delta>0$, define the $\delta$-ball about $\spacepoint$ as $ \mathcal{B}(\spacepoint,\delta) := \{y \in \surface : dist(y, \tangentspaceat{\spacepoint}) < \delta \}$. 

\begin{BNW} 
If $\surface$ is a smooth, convex hypersurface of finite type in $\R^\dimension$ with surface measure $\surfacemeasure$, then
\[
\contFT{d\surfacemeasure}(t \normalat{\spacepoint} ) 
\lesssim \surfacemeasure({\mathcal{B}(\spacepoint,t^{-1})})
\]
as $t \to \infty$.
\end{BNW}
%

The second theorem, due to Rubio de Francia, relates the decay estimates of the Fourier transform of a surface measure to boundedness of the maximal function on $L^p$ spaces.
\begin{RubiodeFrancia}
Suppose that the dimension $\dimension$ is at least 3. If $\contFT{d\surfacemeasure}(\freqpoint) \lesssim (1+\absolutevalueof{\freqpoint})^{-\gamma}$ for all $\freqpoint$ and some $\gamma > 1/2$, then $\contmaxop$ is a bounded operator on $L^p(\R^\dimension)$ for $p> 1+(2\gamma)^{-1}$.
\end{RubiodeFrancia}
\begin{rem}
Since $\gamma>1/2$, the range of $p$ extends below 2. In particular, $\contmaxop$ is bounded on $L^2(\euclideanspace)$.
\end{rem}
%
\subsection{$L^p$ bounds for maximal functions over $\degree$-spheres}
We now combine the Bruna--Nagel--Wainger and Rubio de Francia theorems to establish $L^p$ bounds for maximal operators associated to $\degree$-spheres. Recall that $\absolutevalueof{x}^\degree := \sum_{i=1}^{\dimension} \absolutevalueof{x_i}^\degree$ and the hypersurface $\conthigherordersphere := \{ x \in \euclideanspace: \absolutevalueof{x}^\degree = \radius^\degree \}$ with $\higherorderspheremeasure_{\radius}$ its normalized Gelfand--Leray measure. $\unithigherordersphere$ is finite type of order $\degree$, and
\begin{equation}\label{BNW_bound}
\higherorderspheremeasure_1(\mathcal{B}(x, \delta)) \lesssim \delta^{\frac{\dimension-1}{\degree}}
\end{equation}
uniformly for $x \in \unithigherordersphere$. \eqref{BNW_bound} is sharp at the poles, e.g. $(\pm1, 0,\dots,0)$. 
Since $\unithigherordersphere$ is a compact surface, we may take $\amplitudefxn \equiv 1$. Thus by the Bruna--Nagel--Wainger theorem, the Fourier decay estimates are 
\begin{equation}\label{kspherical_decay}
\contFT{d\surfacemeasure_{1}}(\freqpoint) 
\lesssim \inparentheses{1+\absolutevalueof{\freqpoint}}^{-\frac{\dimension-1}{\degree}} 
\end{equation} 
uniformly for $\xi \in \R^\dimension$.
Applying the Rubio de Francia theorem, we conclude:
\begin{prop}\label{prop:cont_max_fxn_bound_for_higher_order_spheres}
For $\dimension > \frac{\degree}{2}+1$, $\contmaxop$ is a bounded operator on $L^p(\euclideanspace)$ if $p> 1+\frac{\degree}{2\dimension-2} = \frac{2\dimension-2+\degree}{2(\dimension-1)}$.
\end{prop}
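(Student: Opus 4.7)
The plan is to combine the two cited theorems in a straightforward manner. The surface $\unithigherordersphere$ has already been identified as a smooth, compact, convex hypersurface of finite type $\degree$, and the averages $\contavgop$ are precisely the dilation averages associated to $\unithigherordersphere$ (with amplitude $\amplitudefxn \equiv 1$), since $\conthigherordersphere = \radius \cdot \unithigherordersphere$ and the Gelfand--Leray measure scales consistently. Thus $\contmaxop$ falls directly under the hypotheses of the Rubio de Francia maximal theorem, provided we can supply a Fourier decay exponent $\gamma > 1/2$.

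First I would record the Fourier decay: the Bruna--Nagel--Wainger theorem, together with the elementary estimate $\higherorderspheremeasure_1(\mathcal{B}(x,\delta)) \lesssim \delta^{(\dimension-1)/\degree}$ for $\unithigherordersphere$ (sharp at the poles $(\pm 1, 0, \dots, 0)$), yields the pointwise bound \eqref{kspherical_decay}, that is,
\[
\contFT{d\surfacemeasure_1}(\freqpoint) \lesssim (1+\absolutevalueof{\freqpoint})^{-\gamma} \quad \text{with} \quad \gamma = \frac{\dimension-1}{\degree} .
\]
Here one must verify that the normal-direction decay supplied by Bruna--Nagel--Wainger transfers to a uniform decay in all frequency directions; for the convex hypersurface $\unithigherordersphere$ this is standard, since every $\freqpoint \neq 0$ is the outward normal at some (in fact two) points of the surface, allowing one to apply the normal-direction estimate after rotating.

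Next I would check that the hypothesis $\dimension > \degree/2 + 1$ is exactly the condition that $\gamma > 1/2$, rearranging $\frac{\dimension-1}{\degree} > \frac{1}{2}$. I would also note that $\dimension \geq 3$ holds automatically: since $\degree \geq 2$ the condition forces $\dimension > 2$, and $\dimension$ is an integer. So Rubio de Francia's theorem applies and produces boundedness of $\contmaxop$ on $L^p(\euclideanspace)$ for
\[
p > 1 + (2\gamma)^{-1} = 1 + \frac{\degree}{2(\dimension-1)} = \frac{2\dimension-2+\degree}{2(\dimension-1)} ,
\]
which is the claim.

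There is no substantial obstacle here: the proof is a direct concatenation of two black-box results. The only point that demands care is confirming that the Bruna--Nagel--Wainger bound, stated along normal directions, yields a uniform Fourier decay sufficient for Rubio de Francia's hypothesis; this is handled by the convexity of $\unithigherordersphere$ together with the fact that the volume estimate on the caps $\mathcal{B}(x, t^{-1})$ is uniform in $x \in \unithigherordersphere$ with the worst (i.e. largest) constant attained at the poles where the surface is flattest.
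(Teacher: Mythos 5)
Your argument is correct and coincides with the paper's: both establish the cap estimate $\higherorderspheremeasure_1(\mathcal{B}(x,\delta)) \lesssim \delta^{(\dimension-1)/\degree}$, invoke Bruna--Nagel--Wainger to get the uniform Fourier decay \eqref{kspherical_decay} with exponent $\gamma = (\dimension-1)/\degree$, note $\gamma > 1/2$ is exactly $\dimension > \degree/2 + 1$, and then apply Rubio de Francia's maximal theorem. Your extra remark on passing from normal-direction decay to uniform decay via convexity is a detail the paper leaves implicit, but it is exactly the standard point and changes nothing structurally.
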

%
%
\begin{rem}
For the continuous maximal functions we know the sharp Fourier decay estimates. However, these do not necessarily imply the sharp maximal function estimates. In particular, the $L^p$ bounds for $\degree$-spheres are not optimal, but they are sufficient for our applications since $\frac{2\dimension-2+\degree}{2\dimension-2}<\frac{\dimension}{\dimension-\degree}$ for $\dimension > \degree$. There have been many results and much progress in this area -- see for instance \cite{IKM}, but the general problem is still open.
\end{rem}
%

\section{Some machinery of Magyar--Stein--Wainger}\label{section:MSW_machinery}
In this section, we review some of the machinery in \cite{MSW}. In particular, we recall the transference principle of Magyar--Stein--Wainger and two inequalities that will be useful later. 

\subsection{The Magyar--Stein--Wainger transference principle}
Given The Approximation Formula, it will be necessary to understand the relationship between multipliers defined on $\torus$ and $\euclideanspace$. Suppose that $\mu$ is a multiplier supported in $\unitcube$, then we can think of $\mu$ as a multiplier on $\euclideanspace$ or $\torus$; denote this as $\mu_{\euclideanspace}$ and $\mu_{\torus}$ respectively where $\mu_{\torus}(\toruspoint):= \sum_{\latticepoint \in \lattice} \mu(\toruspoint-\latticepoint)$ is the periodization of $\mu_{\euclideanspace}$. These have convolution operators $T_{\euclideanspace}$ and $T_{\torus}$ respectively. For $F:\euclideanspace \to \C$,
\[
T_{\euclideanspace}F(\spacepoint) := \int_{\euclideanspace} \mu_{\euclideanspace}(\freqpoint) \contFT{F}(\freqpoint) \eof{-\spacepoint \cdot \freqpoint} \, d\freqpoint
\]
and for $f:\lattice \to \C$,
\[
T_{\lattice} f(\latticepoint) := \int_{\torus} \mu_{\torus}(\toruspoint) \arithmeticFT{f}(\toruspoint) \eof{-\latticepoint \cdot \toruspoint} \, d\toruspoint .
\]
Equivalently, let $K_{\euclideanspace}$ be the kernel of $T_{\euclideanspace}$, 
\[
K_{\euclideanspace}(\spacepoint) = \int_{\euclideanspace} \mu_{\euclideanspace}(\freqpoint) \eof{\spacepoint \cdot \freqpoint} \, d\freqpoint
\]
and $K_{\lattice}$ be the kernel of $T_{\lattice}$, 
\[
K_{\lattice}(\latticepoint) = \int_{\torus} \mu_{\torus}(\toruspoint) \eof{\latticepoint \cdot \freqpoint} \, d\freqpoint .
\]
Then $K_{\euclideanspace}$ is smooth, and $K_{\lattice}$ is $K_{\euclideanspace}\rvert_{\lattice}$, the restriction of $K_{\euclideanspace}$ to the lattice $\lattice$.

We extend these notions to Banach spaces. Let $B_1, B_2$ be two Banach spaces, possibly infinite dimensional, with norms $\norm{\cdot}_1, \norm{\cdot}_2$, and $\mathcal{L}(B_1,B_2)$ is the space of bounded linear tranformations from $B_1$ to $B_2$. Let $\ell^p_{B_i}$ be the space of functions $f:\lattice \to B_i$ such that $\sum_{\latticepoint \in \lattice} \norm{f}_{i}^p < \infty$ and $L^p_{B_i}$ be the space of functions $F:\euclideanspace \to B_i$ such that $\int_{\euclideanspace} \norm{F}_{i}^p < \infty$. For $\modulus \in \N$, suppose that $\mu: [-1/2\modulus,1/2\modulus]^\dimension \to \mathcal{L}(B_1,B_2)$ is a multiplier with convolution operators $T_{\euclideanspace}$ on $\euclideanspace$ and $T_{\lattice}$ on $\lattice$. Extend $\mu$ periodically to $\unitcube$ to define 
$\mu^{\modulus}_{\torus}(\freqpoint) := \sum_{\latticepoint \in \lattice} \mu_{\torus}(\freqpoint-\latticepoint/\modulus)$ 
with convolution operator $T^{\modulus}_{\lattice}$ on $\lattice$ defined by $\arithmeticFT{T^{\modulus}_{\lattice}f}(\freqpoint) = \mu^{\modulus}_{\torus}(\freqpoint) \arithmeticFT{f}(\freqpoint)$. 

\begin{MSW_transference} 
For $1<p<\infty$,
\begin{equation}\label{tranference_principle}
\norm{T^{\modulus}_{\lattice} f}_{\ell^p_{B_1} \to \ell^p_{B_2}} 
\lesssim \norm{T_{\euclideanspace} f}_{L^p_{B_1} \to L^p_{B_2}} .
\end{equation}
The implicit constant is independent of $B_1,B_2,p$ and $\modulus$.
\end{MSW_transference}

We will apply this lemma with $B_1 = B_2 = \ell^2(\lattice)$ in Lemma~\ref{lemma:maxHLop_arc_bound} of section~\ref{section:approximation_formula}.

\subsection{An $\ell^p$ inequality for $1/\modulus$-periodic multipliers on the torus}
\begin{lemma}[Magyar--Stein--Wainger]\label{lemma:MSW_periodic_inequality}
Suppose that $\mu(\toruspoint) = \sum_{\latticepoint \in \lattice} g({\latticepoint}) \bumpfxn(\toruspoint - \latticepoint/\modulus)$ is a multiplier on $\torus$ where $\bumpfxn$ is smooth and supported in $[-1/2\modulus,1/2\modulus]^\dimension$ with convolution operator $T$ on $\lattice$. Furthermore, assume that $g({\latticepoint})$ is $\modulus$-periodic ($g(\latticepoint_1) = g(\latticepoint_2)$ if $\latticepoint_1 \equiv \latticepoint_2 \mod{\modulus}$). For a $q$-periodic sequence, define the Fourier transform $\arithmeticFT{g}(\latticepoint) = \sum_{b \in \Zmod{\modulus}} g(b) e\left( \frac{\latticepoint \cdot b}{\modulus} \right)$. Then for $1\leq p \leq 2$,
\[
\norm{T}_{\ell^p(\lattice) \to \ell^p(\lattice)}
\lesssim \left( \sup_{m \in \Zmod{\modulus}} \lvert g({m}) \rvert \right)^{2-2/p} \left( \sup_{n \in \Zmod{\modulus}} \lvert \arithmeticFT{g}({n}) \rvert \right)^{2/p-1} 
\]
with implicit constants depending on $\bumpfxn$ and $p$, but not on $g$. 
\end{lemma}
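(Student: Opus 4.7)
The plan is to establish the two endpoint estimates at $p=1$ and $p=2$ and then interpolate using Riesz--Thorin. Taking $\theta = 2 - 2/p$ (so that $1/p = (1-\theta) + \theta/2$), the interpolation exponents $1-\theta = 2/p - 1$ and $\theta = 2 - 2/p$ match the statement exactly.

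At $p = 2$, Plancherel identifies $\|T\|_{\ell^2(\lattice)\to\ell^2(\lattice)}$ with $\|\mu\|_{L^\infty(\torus)}$. Because $\bumpfxn$ is supported in a cube of side $1/\modulus$, the translates $\bumpfxn(\,\cdot\, - \latticepoint/\modulus)$ for $\latticepoint$ running over a set of representatives of $\Zmod{\modulus}^\dimension$ have essentially disjoint supports in $\torus$, so at each $\toruspoint \in \torus$ only a bounded number of terms in the sum defining $\mu$ are nonzero. This gives $\|\mu\|_{L^\infty(\torus)} \lesssim_\bumpfxn \sup_{m} |g(m)|$.

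At $p = 1$, $\|T\|_{\ell^1\to\ell^1}$ equals the $\ell^1(\lattice)$-norm of the convolution kernel $K$ of $T$. I would regroup $\latticepoint' \in \lattice$ as $\latticepoint' = n + \modulus k$ with $n \in \Zmod{\modulus}^\dimension$ and $k \in \lattice$, and use the $\modulus$-periodicity of $g$ to write
\[
\mu(\toruspoint) = \sum_{n \in \Zmod{\modulus}^\dimension} g(n) \sum_{k \in \lattice} \bumpfxn(\toruspoint - n/\modulus - k).
\]
The small support of $\bumpfxn$ then lets Fourier inversion on $\torus$ unfold to an integral on $\euclideanspace$, yielding
\[
K(\latticepoint) = \contFT{\bumpfxn}(-\latticepoint) \sum_{n \in \Zmod{\modulus}^\dimension} g(n) \eof{- n \cdot \latticepoint/\modulus} = \contFT{\bumpfxn}(-\latticepoint)\, \arithmeticFT{g}(-\latticepoint).
\]
Since $\contFT{\bumpfxn}$ is Schwartz on $\euclideanspace$, its restriction to $\lattice$ is in $\ell^1$ (with a norm uniformly bounded in $\modulus$ when $\bumpfxn$ has the typical form $\bumpfxn(\xi) = \psi(\modulus\xi)$, the sum then being a Riemann sum for $\|\contFT{\psi}\|_{L^1(\euclideanspace)}$). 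Hence $\|K\|_{\ell^1(\lattice)} \lesssim_\bumpfxn \sup_n |\arithmeticFT{g}(n)|$.

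Riesz--Thorin interpolation between these two estimates now yields the claimed bound. The argument is essentially routine; the only delicate point is the unfolding of the kernel into a finite sum of size $\modulus^\dimension$ multiplied by a rapidly decaying Euclidean-Fourier factor, which relies in a crucial way on both the $1/\modulus$-support of $\bumpfxn$ and the $\modulus$-periodicity of $g$ to collapse an a priori infinite lattice sum into the discrete Fourier transform of $g$.
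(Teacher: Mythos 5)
Your proof is correct and follows essentially the same route as the paper: Plancherel plus the disjoint-support observation for the $\ell^2$ endpoint, the factorization of the kernel as $K(\latticepoint) = \contFT{\bumpfxn}(\latticepoint)\,\arithmeticFT{g}(\latticepoint)$ (up to sign convention) together with $\ell^1$-summability of the Schwartz factor for the $\ell^1$ endpoint, and Riesz--Thorin interpolation. Your parenthetical remark that uniformity in $\modulus$ requires $\bumpfxn$ to have the dilated form $\bumpfxn(\xi) = \psi(\modulus\xi)$ is a useful clarification that the paper leaves implicit.
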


\begin{proof}
Since $\bumpfxn(\xi-\latticepoint/\modulus)$ have disjoint supports for different $\latticepoint$, $\Lpnorm{\infty}{\torus}{\mu} \leq \sup_{\latticepoint} \absolutevalueof{a(\latticepoint)}$; this gives the $L^2$ bound by Plancherel's theorem. 
The $L^1$ bound follows from Minkowski's inequality because the kernel is $K(\latticepoint) = \arithmeticFT{\bumpfxn}(\latticepoint) \arithmeticFT{a}({\latticepoint})$ and $\lpnorm{1}{\torusFT{\bumpfxn}} \lesssim 1$ since $\bumpfxn$ is smooth. 
Interpolation finishes the lemma.
\end{proof}

\subsection{The main $\ell^2(\Z^\dimension)$ inequality}
An essential ingredient in the proof of Theorem \ref{thm:Wooley_max_fxn_bound} is the following inequality. Let $f$ be a function on $\lattice$, and for $\dyadicradius \leq \radius < 2\dyadicradius$, let $T_\radius$ be a convolution operator on $\lattice$ with multiplier $\mu_\radius(\freqpoint)= \int_{I} \alpha_\dyadicradius(t,\freqpoint) \eof{-\radius^\degree t}\; dt$ for a measurable subset $I$ of $\T$ or $\R$, and $\alpha_{\dyadicradius}$ is a measurable function on $\interval \times \torus$ depending on $\dyadicradius$, an additional parameter which is fixed for this discussion ($\dyadicradius$ will vary later). By Fourier inversion, we have
\[
\absolutevalueof{ T_\radius f} 
\leq \int_{I} \absolutevalueof{\arithmeticFT{\alpha}_\dyadicradius (t,\cdot) \convolvedwith f} \; dt 
.
\]
This last expression does not depend on $\radius$, but does depend on $\dyadicradius$. Therefore, 
\[
\disup{\dyadicradius}{\radius}{\absolutevalueof{ T_\radius f}} 
\leq \int_{I} \absolutevalueof{ \arithmeticFT{\alpha}_\dyadicradius (t,\cdot) * f } \, dt . 
\]
This allows us to estimate the $\ell^2(\Z^\dimension)$ norm of the dyadic maximal function.
\begin{lemma}[Main $\ell^2$ inequality]\label{lemma:main_ell2}
\[
\lpnorm{2}{\disup{\dyadicradius}{\radius}{T_\radius f}}
\leq \absolutevalueof{I} \cdot ||\alpha_\dyadicradius||_{L^{\infty}_{t, \freqpoint}(\interval \times \T^d)} \cdot \lpnorm{2}{f}
\]
where $|I|$ is the length of $I$. In what follows, the point below will be to bound $\alpha_\dyadicradius(t,\freqpoint)$ for $t \in \interval$, a major or minor arc, uniformly for $\freqpoint$ in $\torus$.
\end{lemma}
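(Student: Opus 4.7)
The plan is to follow the inequality chain set up in the discussion immediately preceding the lemma and then invoke Plancherel on $\Z^\dimension$. The hypothesis already gives us the pointwise bound
\[
\disup{\dyadicradius}{\radius}{\absolutevalueof{ T_\radius f}}
\leq \int_{I} \absolutevalueof{ \arithmeticFT{\alpha}_\dyadicradius (t,\cdot) \convolvedwith f } \, dt,
\]
where $\arithmeticFT{\alpha}_\dyadicradius(t,\cdot)$ denotes the $\torus$-variable Fourier transform of $\alpha_\dyadicradius(t,\cdot)$ (so that it lives on $\lattice$ and convolves with $f$). Since the right-hand side has no dependence on $\radius$, the supremum has already been absorbed on the left, and the task reduces to estimating the $\ell^2(\lattice)$ norm of an integral of convolutions.

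First I would apply Minkowski's inequality for integrals to pull the $\ell^2(\lattice)$ norm inside the $t$-integral:
\[
\lpnorm{2}{\disup{\dyadicradius}{\radius}{T_\radius f}}
\leq \int_I \lpnorm{2}{\arithmeticFT{\alpha}_\dyadicradius(t,\cdot) \convolvedwith f} \, dt.
\]
Next, for each fixed $t \in I$, I use Plancherel's theorem on $\lattice$ together with the convolution-multiplication identity to rewrite
\[
\lpnorm{2}{\arithmeticFT{\alpha}_\dyadicradius(t,\cdot) \convolvedwith f}
= \Lpnorm{2}{\torus}{\alpha_\dyadicradius(t,\cdot) \cdot \arithmeticFT{f}},
\]
and then bound this by $\Lpnorm{\infty}{\torus}{\alpha_\dyadicradius(t,\cdot)} \cdot \Lpnorm{2}{\torus}{\arithmeticFT{f}}$. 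A second application of Plancherel identifies $\Lpnorm{2}{\torus}{\arithmeticFT{f}}$ with $\lpnorm{2}{f}$.

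Finally, I pull the uniform bound $\Lpnorm{\infty}{\torus}{\alpha_\dyadicradius(t,\cdot)} \leq \|\alpha_\dyadicradius\|_{L^\infty_{t,\freqpoint}(\interval \times \torus)}$ out of the integral and integrate $1$ over $t \in I$, producing the factor $\absolutevalueof{I}$. Combining these steps yields precisely the claimed inequality. There is no real obstacle here beyond bookkeeping — every step is a standard application of Minkowski and Plancherel — but one should take care that Minkowski applies (which requires only measurability and finiteness of the relevant integrals, both of which follow from the hypothesis that $\alpha_\dyadicradius$ is bounded and $I$ has finite measure), and that the Fourier transform conventions match between the $\torus$-side multiplier $\mu_\radius$ and its kernel $\arithmeticFT{\alpha}_\dyadicradius(t,\cdot)$ on $\lattice$.
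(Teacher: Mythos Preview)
Your proposal is correct and follows exactly the approach the paper intends: the pointwise bound for the dyadic supremum is already derived in the discussion preceding the lemma, and the remaining steps (Minkowski's integral inequality followed by Plancherel on $\lattice$) are the standard ones the paper leaves implicit. There is nothing to add.
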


\section{Hypothesis $\hypothesis{\powerloss}$ and the general form of Theorem~\ref{thm:Wooley_max_fxn_bound}}\label{section:general_thm}
Our proof of Theorem~\ref{thm:Wooley_max_fxn_bound} relies heavily on bounds for exponential sums and oscillatory integrals. The necessary bounds for oscillatory integrals were reviewed in section~\ref{section:cont_max_fxns}. Note that the range of $L^p(\euclideanspace)$ spaces for the continuous $\degree$-spherical maximal functions in Proposition~\ref{prop:cont_max_fxn_bound_for_higher_order_spheres} uses sharp bounds for oscillatory integrals and is larger than possible for the arithmetic $\degree$-spherical maximal functions. Meanwhile the sharp bounds for our exponential sums are unknown. 
Motivated by Waring's problem and Vinogradov's mean value conjecture, we now describe our essential hypothesis on exponential sums which plays a similar role to the Bruna--Nagel--Wainger bounds \eqref{kspherical_decay} for the Fourier transform of the continuous $\degree$-spherical surface measure.  
\begin{Hypothesis}
Suppose that there exists integers $ 1 \leq \unit < \modulus \leq N$ relatively prime such that $\absolutevalueof{t-\unit/\modulus}~\leq~\modulus^{-2}$. Then
\[ 
\sum_{n=1}^N e(t n^\degree + \xi n) \lessapprox N (\modulus^{-1} + N^{-1} + \modulus N^{-\degree})^\powerloss 
\]
with implicit constants depending on $N$, but independent of $\xi$ and $\modulus$.
\end{Hypothesis}
%

\begin{rem}
The circle method in \cite{Davenport} shows that if Hypothesis $\hypothesis{\powerloss}$ is true, then $\nonzeroasymptoticshold \leq \max \inbraces{2 \degree, \degree/\powerloss}+1$. 
\end{rem}

Taking $\powerloss = 0$, we recover the trivial bound with a logarithmic-loss. The goal is to take $\powerloss$ as large as possible. Weyl gave the first non-trivial bound showing that we can take $\powerloss = 2^{1-\degree}$. So the hypothesis is not vacuous. There are many important works improving Weyl's bound, but the best asymptotic bound in $\degree$ is currently due to Wooley in \cite{Wooley_efficient_congruencing1}.
\begin{Wooley}
If $\degree \geq 3$, then $\hypothesis{\powerloss}$ is true with $\powerloss = \inbrackets{2 \degree \inparentheses{\degree-1}}^{-1}$, and if $\degree \geq 4$, then $\hypothesis{\powerloss}$ is true with $\powerloss = \inbrackets{2 \degree \inparentheses{\degree-2}}^{-1}$. 
\end{Wooley}
We now phrase The Approximation Formula and Theorem~\ref{thm:Wooley_max_fxn_bound} in terms of our hypothesis and record the best results currently available.
\begin{approximationlemma}
Let $\charfxnofarsphere(\latticepoint)$ be the characteristic function of $\arithmetichigherordersphere$ on $\lattice$. If hypothesis $\hypothesis{\powerloss}$ is true for some $0 < \powerloss < 1$, then for $\dimension>\max{\{ \degree(\degree+2), \degree/\powerloss\}}$, the $\lattice$ Fourier transform can be decomposed as 
\begin{equation}\label{approximation_formula}
\arithmeticFT{\charfxnofarsphere}(\xi) 
= \sum_{\modulus=1}^{\infty} \sum_{\unit \in \unitsmod{\modulus}} \eof{ \frac{\unit \radius^\degree}{\modulus} } \sum_{\latticepoint \in \lattice} \twistedGausssum{\latticepoint} \Psi(\modulus \xi - \latticepoint) \contFT{d\surfacemeasure_\radius}(\modulus \xi - \latticepoint) + \arithmeticFT{\error}(\xi)
\end{equation}
where the error term $\arithmeticFT{\error}$ is a multiplier term with convolution operator $\error$ satisfying 
\begin{equation}\label{error_bound}
\lpnorm{2}{\disup{\dyadicradius}{\radius}{\error}}
\lesssim \dyadicradius^{\dimension-\degree-\powersavings} 
\end{equation}
for some $\powersavings(\degree,\dimension, \powerloss)>0$. 
\end{approximationlemma}
%
%
\begin{thm}\label{thm:Vinogradov_max_thm}
Fix the degree $\degree \geq 3$. If $\hypothesis{\powerloss}$ is true for some $0 < \powerloss < 1$,  then $\maxop$ is bounded on $\ell^p(\lattice)$ for $\dimension > \max{\{ \degree(\degree+2), \degree/\powerloss\}}$ and $p> \max{ \{ \frac{\dimension}{\dimension -\degree/2\powerloss}, \frac{\dimension}{\dimension-\degree} \}}$.
\end{thm}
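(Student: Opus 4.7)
The plan follows the Magyar--Stein--Wainger strategy and its extension in \cite{Magyar_ergodic}. By a standard dyadic reduction, $\maxop f \leq \sup_{\dyadicradius \in 2^{\N}} \dyadicmaxop f$, so it suffices to prove that the dyadic maximal operator $\dyadicmaxop$ is bounded on $\ell^p(\lattice)$ with operator norm $\lesssim \dyadicradius^{-\delta(p)}$ for some $\delta(p)>0$, uniformly in the dyadic scale $\dyadicradius$; summing a geometric series in $\dyadicradius$ then completes the argument. Throughout, I would normalize using $\numberoflatticepoints(\radius) \uptoconstants \radius^{\dimension-\degree}$, which holds in the range $\dimension > \max\{\degree(\degree+2),\degree/\powerloss\}$ under Hypothesis $\hypothesis{\powerloss}$.

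Applying The Approximation Formula splits the multiplier of $\avgop$ (after normalization) into a main term $M_\radius$ arising from the rational sum in \eqref{approximation_formula} and a normalized error term $\error/\numberoflatticepoints(\radius)$. For the error, the bound \eqref{error_bound} yields $\ell^2$ decay of order $\dyadicradius^{-\powersavings}$ for the dyadic supremum. Combined with a trivial $\ell^\infty \to \ell^\infty$ bound (from $\|\avgop\|_{\ell^\infty\to\ell^\infty}\leq 1$ and a matching bound on the main term), Riesz--Thorin interpolation produces $\ell^p$ decay of the form $\dyadicradius^{-\delta_E(p)}$ for all $p > 2$. The condition $\delta_E(p) > 0$, once $\powersavings$ is optimized as a function of $\dimension,\degree,\powerloss$, enforces the threshold $p > \frac{\dimension}{\dimension - \degree/(2\powerloss)}$ appearing in the statement.

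For the main term, I would decompose $M_\radius = \sum_{\modulus \geq 1} M_\radius^\modulus$ according to the outer sum in \eqref{approximation_formula}. Each $M_\radius^\modulus$ localizes near Farey fractions with denominator $\modulus$ and factors (modulo the cutoff $\Psi$) as a product of a $1/\modulus$-periodic arithmetic multiplier encoding the Gauss sums $\twistedGausssum{\cdot}$ and a continuous multiplier built from $\Psi(\modulus\xi)\contFT{d\surfacemeasure_\radius}(\modulus\xi)$. I would bound the arithmetic factor via Lemma~\ref{lemma:MSW_periodic_inequality} together with the Gauss sum decay $|\twistedGausssum{0}| \lessapprox \modulus^{-\powerloss\dimension}$, which follows from Hypothesis $\hypothesis{\powerloss}$ applied at rationals $\unit/\modulus$; this yields an $\ell^p$ operator norm gain of order $\modulus^{-c(p)}$. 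For the continuous factor I would invoke the Magyar--Stein--Wainger transference principle to reduce to an $\ell^\infty_\radius$-valued multiplier on $L^p(\euclideanspace)$, which is handled by Proposition~\ref{prop:cont_max_fxn_bound_for_higher_order_spheres} on the range $p > \frac{2\dimension-2+\degree}{2(\dimension-1)}$ (strictly weaker than our target, so no obstruction appears from this side). Summing over $\modulus \geq 1$ then demands $c(p)>1$, which translates to the second threshold $p > \frac{\dimension}{\dimension-\degree}$.

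Combining the two ranges produces the stated lower bound $p > \max\bigl\{\frac{\dimension}{\dimension-\degree/(2\powerloss)},\,\frac{\dimension}{\dimension-\degree}\bigr\}$, while still retaining a positive $\dyadicradius$-decay that can be summed over dyadic scales. The main obstacle is the bookkeeping: tracking how the Gauss sum exponent propagates through the $\ell^1/\ell^2$ interpolation of Lemma~\ref{lemma:MSW_periodic_inequality}, verifying that the resulting $\modulus$-summability threshold matches $\dimension/(\dimension-\degree)$ precisely, and ensuring that after summing the $\modulus$-series the net $\dyadicradius$-decay remains positive. Uniform control over $\radius \in [\dyadicradius,2\dyadicradius)$ (rather than at the endpoint alone) introduces a vector-valued supremum that must be handled via Lemma~\ref{lemma:main_ell2} or a similar device, compatibly with both the transference principle and Lemma~\ref{lemma:MSW_periodic_inequality}.
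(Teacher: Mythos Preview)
Your overall architecture---split $\avgop$ into the main term $\HLop$ plus error via The Approximation Formula, bound $\maxHLop$ directly by Gauss sum decay combined with transference to the continuous $\degree$-spherical maximal function, and sum the error over dyadic scales---matches the paper's. The gap is in your treatment of the error for $p<2$. You interpolate the $\ell^2$ decay \eqref{error_bound} against a uniform $\ell^\infty$ bound; Riesz--Thorin between these endpoints yields $\delta_E(p)=2\powersavings/p>0$ for \emph{every} $p\in[2,\infty)$ automatically, and produces nothing below $2$. But the threshold $\dimension/(\dimension-\degree/2\powerloss)$ lies strictly below $2$ in the stated range of $\dimension$, so your argument only reaches $p\geq 2$. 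The paper's missing ingredient is Theorem~\ref{thm:dyadic}: an independent circle-method argument showing that $\dyadicmaxop$ (hence also $\dyadicmaxHLop$, hence their difference) is bounded on $\ell^p(\lattice)$ \emph{uniformly in $\dyadicradius$} precisely for $p>\max\{\dimension/(\dimension-\degree),\,\dimension/(\dimension-\degree/2\powerloss)\}$. One then interpolates the $\ell^2$ decay of the dyadic error against this uniform $\ell^{p_0}$ bound (not $\ell^\infty$) to obtain a summable $\dyadicradius^{-\epsilon(p)}$ on the whole range. The threshold $\dimension/(\dimension-\degree/2\powerloss)$ is generated by the minor-arc estimate inside Theorem~\ref{thm:dyadic}, not by \eqref{error_bound}.

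Two smaller corrections. First, the Gauss sum input to Lemma~\ref{lemma:MSW_periodic_inequality} is Hua's bound $|\twistedGausssum{\latticepoint}|\lessapprox \modulus^{-\dimension/\degree}$, not $\modulus^{-\powerloss\dimension}$; Hypothesis~$\hypothesis{\powerloss}$ governs Weyl sums on minor arcs, not complete Gauss sums, and it is Hua's exponent $1/\degree$ that, after summing $\sum_\modulus \modulus\cdot \modulus^{-(\dimension/\degree)(2-2/p)}$, produces exactly $p>\dimension/(\dimension-\degree)$. Second, your opening reduction ``it suffices that $\|\dyadicmaxop\|_{\ell^p\to\ell^p}\lesssim \dyadicradius^{-\delta(p)}$'' is false as stated (test on $f=\indicator{[0,\dyadicradius]^\dimension}$); only the error carries decay in $\dyadicradius$, while $\maxHLop$ must be bounded as a genuine full supremum.
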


In particular, the Weyl bound allows us to take $\powerloss = 2^{1-\degree}$ for $\degree \geq 2$ while Wooley's sup bound allows us to take $\powerloss = \inbrackets{2\degree(\degree-2)}^{-1}$ for $\degree \geq 4$. 
Wooley's bounds improve on the classical Weyl bound for $\degree \geq 8$. 
It is unclear how small we can expect to take $\powerloss$. 
Therefore, the $\degree$-spherical maximal function is bounded on $\ell^p(\lattice)$ if 
\begin{itemize}
\item $p>\frac{\dimension}{\dimension - \degree 2^{\degree-2}}$ and $\dimension>\degree 2^{\degree-1}$ for $2\leq \degree \leq 7$
\item $p>\frac{\dimension}{\dimension - \degree^2(\degree-2)}$ and $\dimension > 2\degree^2 (\degree-2)$ for $\degree \geq 8$. 
\end{itemize}
At the bottom of page 196 in \cite{Montgomery}, Montgomery conjectures that one can take $\powerloss = \degree^{-1}$. 
Montgomery's conjecture implies the $\degree$-spherical maximal function is bounded on $\ell^p(\lattice)$ for $p>\frac{\dimension}{\dimension - \degree^2/2}$ and $\dimension>\degree(\degree+2)$. Note that this is still a order of $\degree$ away from the conjectured endpoint because a factor of $\degree$ is lost by using sup bounds. In Waring's problem the loss of a factor of $\degree$ by sup bounds is overcome in by using mean values of exponential sums. 
Regrettably, our method does not exploit this technique. 
%
%
We state similar improvements to the pointwise ergodic theorem in section~\ref{section:ergodic}. 

\section{The dyadic maximal operator}\label{section:dyadic}
In this section we prove that $\dyadicmaxop$ is uniformly bounded in $\dyadicradius$ on $\ell^p(\lattice)$ for a range of $\ell^p(\lattice)$-spaces depending on Hypothesis $\hypothesis{\powerloss}$. 
Our analysis follows closely the analysis in \cite{Magyar_dyadic} and \cite{AS}; we include the proof for completeness. 
Theorem~\ref{thm:dyadic} will be used in the proof of Theorem~\ref{thm:Vinogradov_max_thm} in section~\ref{section:maximal_theorem_proof}. 
\begin{thm}\label{thm:dyadic}
If Hypothesis $\hypothesis{\powerloss}$ is true for some $0<\powerloss<1$, then for all $\dyadicradius > 0$, $\dyadicmaxop$ is uniformly bounded in $\dyadicradius$ on $\ell^p(\lattice)$ for $\dimension > \max{\inbraces{2\degree, \degree/\powerloss }}$ and  $\max{\inbraces{\frac{\dimension}{\dimension-\degree}, \frac{\dimension}{\dimension -\degree/2\powerloss}} } < p \leq 2$.
\end{thm}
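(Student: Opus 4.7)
The plan is to prove Theorem~\ref{thm:dyadic} by a Hardy--Littlewood dissection of the multiplier $\arithmeticFT{A_\radius}$, bounding the major- and minor-arc contributions separately using the transference machinery of section~\ref{section:MSW_machinery} together with Hypothesis $\hypothesis{\powerloss}$. First I would use the Fourier identity $\indicator{|n|^\degree = \radius^\degree} = \int_0^1 \eof{t(|n|^\degree - \radius^\degree)}\, dt$ to represent
\[
\numberoflatticepoints(\radius) \arithmeticFT{A_\radius}(\toruspoint) = \int_\T \prod_{i=1}^\dimension S_\radius(t,\toruspoint_i) \, \eof{-\radius^\degree t}\, dt
\]
with the one-dimensional Weyl sum $S_\radius(t,\eta) := \sum_{|n|\leq \radius} \eof{tn^\degree + n\eta}$. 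After smoothly truncating the Weyl sums at scale $2\dyadicradius$, this representation fits the template of Lemma~\ref{lemma:main_ell2} with an amplitude $\alpha_\dyadicradius(t,\toruspoint)$ independent of the particular $\radius \in [\dyadicradius, 2\dyadicradius) \cap \acceptableradii$.

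Next I would perform a Dirichlet dissection $\T = \majorarcs_\dyadicradius \sqcup \minorarcs_\dyadicradius$, where $\majorarcs_\dyadicradius = \bigsqcup_{\modulus \leq \dyadicradius^{\degree-1}} \bigsqcup_{\unit \in \unitsmod{\modulus}} I(\unit/\modulus)$ consists of arcs of width $\sim \modulus^{-1}\dyadicradius^{1-\degree}$, splitting $\alpha_\dyadicradius = \alpha_\dyadicradius^{\majorarcs} + \alpha_\dyadicradius^{\minorarcs}$. On the minor arcs, Hypothesis $\hypothesis{\powerloss}$ applied coordinatewise gives $|S_\dyadicradius(t,\eta)| \lessapprox \dyadicradius^{1-\powerloss}$ uniformly in $\eta$, hence $\|\alpha_\dyadicradius^{\minorarcs}\|_{L^\infty_{t,\toruspoint}} \lessapprox \dyadicradius^{\degree - \dimension\powerloss}$; Lemma~\ref{lemma:main_ell2} then yields
\[
\lpnorm{2}{\disup{\dyadicradius}{\radius}{A_\radius^{\minorarcs} f}} \lessapprox \dyadicradius^{\degree - \dimension\powerloss} \lpnorm{2}{f},
\]
uniformly (indeed decaying) in $\dyadicradius$ whenever $\dimension > \degree/\powerloss$. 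On the major arcs, the classical Hardy--Littlewood--Vinogradov asymptotic factors $\alpha_\dyadicradius^{\majorarcs}$ on each arc $I(\unit/\modulus)$ as a $1/\modulus$-periodic Gauss-sum multiplier times a localized continuous oscillatory integral; Lemma~\ref{lemma:MSW_periodic_inequality} combined with Weyl--Hua bounds on $\twistedGausssum{\cdot}$ controls the periodic factor, while the Magyar--Stein--Wainger transference principle and Proposition~\ref{prop:cont_max_fxn_bound_for_higher_order_spheres} handle the continuous factor. Summing over $\modulus \leq \dyadicradius^{\degree-1}$ with Gauss-sum decay then yields a uniform $\ell^p$ bound on the major-arc operator for $p > \dimension/(\dimension-\degree)$.

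To descend below $p = 2$ on the minor arcs, I would interpolate the $\ell^2$ gain $\dyadicradius^{\degree-\dimension\powerloss}$ against an $\ell^1 \to \ell^1$ estimate obtained by Minkowski and an explicit spatial kernel analysis of $\alpha_\dyadicradius^{\minorarcs}$; balancing the two produces the threshold $p > \dimension/(\dimension - \degree/2\powerloss)$, and taking the maximum with the major-arc range recovers the statement. The main obstacle I expect is precisely this minor-arc interpolation step: a naive Riesz--Thorin interpolation between $\ell^2$ and $\ell^\infty$ recovers only $p > 2$, so one must exploit the spatial localization of the minor-arc kernel carefully to extract a useful estimate below $p = 2$. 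A secondary subtlety is the bookkeeping of boundary errors from the smooth truncation at $2\dyadicradius$ and from restricting $\radius$ to $\acceptableradii$, both of which must be absorbed without damaging uniformity in $\dyadicradius$.
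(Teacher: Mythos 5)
Your overall strategy --- circle-method dissection of the multiplier, the main $\ell^2$ inequality with Hypothesis $\hypothesis{\powerloss}$ on the minor arcs, and interpolation against an $\ell^1$ bound to get below $p=2$ --- is the paper's strategy, and your minor-arc half is essentially identical to the paper's Lemma~\ref{lemma:dyadic_minor_arc_bound} and Corollary~\ref{cor:dyadic_minor_arcs_bound}. Where you diverge is on the major arcs. You propose to factor each major-arc multiplier into a $1/\modulus$-periodic Gauss-sum piece times a localized continuous piece, handling the former by Lemma~\ref{lemma:MSW_periodic_inequality} with Hua's bound and the latter by the Magyar--Stein--Wainger transference lemma plus Proposition~\ref{prop:cont_max_fxn_bound_for_higher_order_spheres}. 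That is the argument the paper runs for the full maximal operator $\maxHLop$ (Lemma~\ref{lemma:maxHLop_arc_bound}), and to use it here you would also need to control the discrepancy between the true arc integral $J_\radius$ over $\arc$ and its idealization $I_\radius$ over all of $\R$, which the paper postpones to the approximation-formula section. The paper's dyadic major-arc argument (Lemma~\ref{lemma:dyadic_major_arc_bound}) avoids transference entirely: after smoothing the cutoff by $e^{-2\pi\epsilon\absolutevalueof{\cdot}^\degree}$, it applies Poisson summation to write $\latticeFT{h_z}$ as a sum of Gauss sums against $\contFT{h_z}$, then estimates $\contFT{h_z}$ directly by Hardy's steepest-descent bounds (Proposition~\ref{prop:freq_localization}) and the Gauss sums by Hua, interpolates $\ell^1$ with $\ell^2$, and integrates over the arc. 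Both routes produce the same $\modulus^{-\frac{\dimension}{\degree}(2-2/p)}$ decay and sum over $\modulus < \WaringMajorarclevel$ to give the $p > \frac{\dimension}{\dimension-\degree}$ range, so yours is workable, just heavier machinery for the dyadic statement. Two smaller notes. First, your description of the dissection is garbled: writing $\majorarcs_\dyadicradius = \bigsqcup_{\modulus \leq \dyadicradius^{\degree-1}} I(\unit/\modulus)$ exhausts the whole Farey dissection and leaves nothing for the minor arcs; the paper takes major arcs to be those with $\modulus < \WaringMajorarclevel$ and minor arcs those with $\WaringMajorarclevel \leq \modulus < \WaringFareylevel$. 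Second, your flagged ``main obstacle'' --- that $\ell^2$/$\ell^\infty$ interpolation only gives $p>2$ --- dissolves once you commit to the $\ell^1$ endpoint you already propose: the $\ell^1$ bound on each minor arc is the trivial Fubini estimate $\lpnorm{1}{\indicator{B_{2\dyadicradius}} \convolvedwith \fxn} \lesssim \dyadicradius^{\dimension} \lpnorm{1}{\fxn}$ times $\absolutevalueof{\arc}$, and interpolating this against the $\ell^2$ power-saving $\absolutevalueof{\arc}\dyadicradius^{\degree - \dimension\powerloss}$ gives $\absolutevalueof{\arc}\dyadicradius^{\degree - \dimension\powerloss(2-2/p)}$; summing $\absolutevalueof{\arc}$ over the minor arcs closes the argument with no further kernel analysis.
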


In section~\ref{section:circle_method_decomposition} we follow the circle method paradigm and decompose our operators $\avgop$ into pieces corresponding to major and minor arcs. In section~\ref{section:dyadic_major_arcs} we bound each major arc piece in Lemma~\ref{lemma:dyadic_major_arc_bound}, and in section~\ref{section:dyadic_minor_arcs} we bound each minor arc piece in Lemma~\ref{lemma:dyadic_minor_arc_bound}. Theorem~\ref{thm:dyadic} follows immediately from Corollaries~\ref{cor:dyadic_major_arcs_bound} and \ref{cor:dyadic_minor_arcs_bound}.
%

\subsection{The circle method decomposition}\label{section:circle_method_decomposition}
%
If the dimension is sufficiently large, say $\dimension \geq \nonzeroasymptoticshold$, then $\numberoflatticepoints(\radius) \uptoconstants \radius^{\dimension-\degree}$. This allows us to redefine the averages $\avgop$ to be
\[
\averagefxn(\latticepoint) 
= \radius^{\degree-\dimension} \sum_{n \in \arithmetichigherordersphere} \fxn(\latticepoint-n)
\] 
at the expense of a constant. $\avgop$ is a convolution operator with multiplier $\latticeFT{\avgop}(\freqpoint) = \radius^{\degree-\dimension} a_{\radius}(\freqpoint)$ where 
\[
a_{\radius}(\freqpoint) 
:=  \sum_{\latticepoint \in \arithmetichigherordersphere} \eof{\latticepoint \cdot \freqpoint} 
\] 
is the Fourier transform of the characteristic function of the set of lattice points on the $\degree$-sphere $\arithmeticsphere{\radius}$. Using the orthogonality relation 
\begin{equation}\label{orthogonality_relation}
\delta(n) 
= \int_0^1 \eof{m t} \, dt
\end{equation} 
for $n \in \Z$, we rewrite $a_{\radius}$ as 
\[
a_{\radius}(\freqpoint) 
= \int_{0}^{1} \sum_{|\latticepoint| \leq 2\dyadicradius} \eof{(\absolutevalueof{\latticepoint}^\degree-\radius^\degree)t + \latticepoint \cdot \freqpoint} \, dt 
\] 
for any fixed $\dyadicradius > \radius$. 
The sum above is over lattice points in a ball of radius $2 \radius$ since the $\degree$-sphere is contained in the ball of radius $2 \radius$ for any degree $\degree$. 
Note that 
\[
\avgop \fxn(\latticepoint) 
= \int_0^1 \eof{(|\cdot|^\degree - \radius^\degree) t} \indicator{B_{2\dyadicradius}} \convolvedwith \fxn (\latticepoint) \, dt 
\] 
where $\indicator{B_{\dyadicradius}}$ is the characteristic function of the ball of radius $\dyadicradius$. 

%
Given $X \in \R^{+}$, define the Farey sequence $\Fareysequence{X}$ of level $X$ as the set $\inbraces{\Fareyfraction : 1 \leq \unit \leq \modulus \leq X, (\unit,\modulus)=1}$ written in increasing order. The Farey sequence allows us to partition the unit interval $[0,1)$ in the following way. For each $\Fareyfraction \in \Fareysequence{X}$, suppose that $a_1/q_1 < a/q < a_2/q_2$ are neighbors, and let $\overline{\arc}:= \left[ \frac{a+a_1}{q+q_1}, \frac{a+a_2}{q+q_2} \right)$. This does not make sense at the endpoints, 0 and 1, and we include these points by letting $\overline{I(0)}=\overline{I(1)}:= \left[0,1/\floor{X} \right) \cup \left[ \frac{\floor{X}-1}{\floor{X}},1 \right)$. Then $[0,1)$ is the disjoint union of arcs $\overline{\arc}$ for ${\Fareyfraction \in \Fareysequence{X}}$. These are called arcs, a term from the original version of the circle method. We will need to make each arc (almost) symmetric about 0, so we shift $\overline{\arc}$ by $\Fareyfraction$ to get the arc $\arc := \left[ \frac{a+a_1}{q+q_1}, \frac{a+a_2}{q+q_2} \right) - \Fareyfraction = \left[ \frac{-1}{\modulus(\modulus+\modulus_1)}, \frac{1}{\modulus(\modulus+\modulus_2)} \right)$. For $\unit/\modulus \in \Fareysequence{X}$, $X \leq \modulus+\modulus_1, \modulus+\modulus_2 \leq 2X$.
Threfore, each arc has length $1/qX \leq |\arc| \leq 2/qX$. 
See \cite{HardyWright} for more details on the Farey sequence and Diophantine approximation.
%
We make a Farey dissection of level $\WaringFareylevel$ on $\unitinterval$. This decomposes $[0,1]$ into the disjoint union of \emph{arcs} $\overline{\arc}$ for $1 \leq \unit < \modulus < \dyadicradius^\degree$ and $\unit \in \unitsmod{\modulus}$. The Farey dissection induces the following decomposition on $a_{\radius}$: 
\[
a_{\radius}(\freqpoint) 
= \sum_{\modulus < \dyadicradius^\degree} \sum_{\unit \in \unitsmod{\modulus}} \int_{\overline{\arc}} \eof{-t \radius^\degree} \sum_{|\latticepoint| \leq 2\dyadicradius} \eof{\absolutevalueof{\latticepoint}^\degree t + \latticepoint \cdot \freqpoint} \, dt 
. 
\]
We isolate each piece to define for an arc $\arc$, 
\[
a_{\radius}^{\Fareyfraction}(\freqpoint) 
:= \int_{\overline{\arc}} \eof{-t \radius^\degree} \sum_{|\latticepoint| \leq 2\dyadicradius} \eof{\absolutevalueof{\latticepoint}^\degree t + \latticepoint \cdot \freqpoint} \, dt 
. 
\]
%
By translating $\overline{\arc}$ to $\arc := \overline{\arc}-\unit/\modulus$, we find that 
\begin{equation}\label{avgop_arc_definition}
a^{\unit/\modulus}_\radius(\freqpoint) 
= \eof{\radius^\degree \unit/\modulus} \int_{\arc} \eof{-t \radius^\degree} \sum_{|\latticepoint| \leq 2\dyadicradius} \eof{\absolutevalueof{\latticepoint}^\degree t + \latticepoint \cdot \freqpoint} \, dt 
. 
\end{equation}

We now define the major and minor arcs; let $\majorarcs := \{ \arc : 1\leq \unit < \modulus < \dyadicradius , \unit \in \unitsmod{\modulus} \}$ be the major arcs and $\minorarcs := [0,1] \setminus \majorarcs$ be the minor arcs. This splits the multiplier into major and minor arc pieces. Define the major arc multiplier 
\[
a^{Major}_{\radius} 
:= \sum_{\modulus < \WaringMajorarclevel} \sum_{\unit \in \unitsmod{\modulus}} a^{\unit/\modulus}_\radius 
\]
and minor arc multiplier 
\[
a^{minor}_{\radius} 
:= \sum_{\WaringMajorarclevel \leq \modulus < \WaringFareylevel} \sum_{\unit \in \unitsmod{\modulus}} a^{\unit/\modulus}_\radius 
\]
so that 
\[
a_{\radius} 
= a^{Major}_{\radius} + a^{minor}_{\radius} .
\]
$\avgop^{Major}$ and $\avgop^{minor}$ are their respective convolution operators normalized so that $\latticeFT{\avgop^{Major}}(\freqpoint) = \radius^{\degree-\dimension} a^{Major}_{\radius}$ and similarly for $\avgop^{minor}$. 
%
%
%
Fix $\dyadicradius >0$. 
By the triangle inequality, we reduce to bounding the dyadic maximal major arc operator and the dyadic maximal minor arc operator: 
\[
\lpnorm{p}{\disup{\dyadicradius}{\radius}{\absolutevalueof{\avgop \fxn}}} 
\leq \lpnorm{p}{\disup{\dyadicradius}{\radius}{\absolutevalueof{ \avgop^{Major} \fxn} } } + \lpnorm{p}{\disup{\dyadicradius}{\radius}{\absolutevalueof{ \avgop^{minor} \fxn} } } 
. 
\]
%

\subsection{Major arcs bounds for the dyadic maximal operator}\label{section:dyadic_major_arcs}
We begin our analysis of the dyadic maximal function by studying the dyadic maximal function of a major arc piece. It will be convenient to replace the exponential sum $\sum_{\absolutevalueof{\latticepoint} \leq 2\dyadicradius} \eof{\absolutevalueof{\latticepoint}^\degree t + \latticepoint \cdot \freqpoint}$ with a sharp cut-off by the smoothed exponential sum $\sum_{\latticepoint\in \lattice} \eof{\absolutevalueof{\latticepoint}^\degree z + \latticepoint \cdot \freqpoint}$, for complex $z = t+i \epsilon$ with $t$ in a major arc $\arc$ and $\epsilon > 0$. We recognize this as the Fourier transform on $\lattice$ of an analytic function and use Poisson summation to estimate the smoothed exponential sum. This will lead us to the following lemma which is the main result of this section. 
%
\begin{lemma}\label{lemma:dyadic_major_arc_bound} 
If $t$ is on a major arc $\arc \in \majorarcs$ and $1 \leq p \leq 2$, then 
\[
\lpnorm{p}{\disup{\dyadicradius}{\radius}{\absolutevalueof{ \avgop^{\Fareyfraction} \fxn} } } 
\lessapprox \modulus^{-\frac{\dimension}{\degree}(2-2/p)} \lpnorm{p}{\fxn} 
.
\]
\end{lemma}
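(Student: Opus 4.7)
The plan is to apply the Hardy--Littlewood circle method to $a^{\unit/\modulus}_\radius$: Poisson summation isolates a Gauss-sum factor with $1/\modulus$-periodic structure, Lemma~\ref{lemma:MSW_periodic_inequality} converts this into an $\ell^p$ gain of size $\modulus^{-(\dimension/\degree)(2-2/p)}$, and Lemma~\ref{lemma:main_ell2} together with a straightforward $\ell^1$ kernel estimate controls the dyadic supremum. Marcinkiewicz interpolation then produces the stated bound.

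First, I will translate the variable of integration so that the arithmetic phase $\eof{\unit|\latticepoint|^\degree/\modulus}$ appears inside the exponential sum (the $\eof{\radius^\degree\unit/\modulus}$ prefactor of \eqref{avgop_arc_definition} absorbs the corresponding $r$-part of the shift). Decomposing each $\latticepoint = \modulus\latticepoint' + b$ with $b \in \Zmod{\modulus}^\dimension$ and using $|\modulus\latticepoint' + b|^\degree \equiv |b|^\degree \pmod\modulus$ factors the arithmetic phase as $\eof{\unit|b|^\degree/\modulus}$. Replacing the sharp cutoff $|\latticepoint| \leq 2\dyadicradius$ by a smooth bump $\phi(\latticepoint/\dyadicradius)$ (the error being absorbed by a standard Hua-type estimate) and performing Poisson summation on the resulting $\latticepoint'$-sum rearranges the multiplier into
\[
a^{\unit/\modulus}_\radius(\freqpoint) = \text{(unimodular phase)} \cdot \sum_{\latticepoint \in \lattice} \twistedGausssum{\latticepoint} \, \Psi(\modulus\freqpoint - \latticepoint) \, \beta_\radius(\freqpoint - \latticepoint/\modulus),
\]
where $\Psi$ isolates one $\latticepoint$ at a time (the tail $|\modulus\freqpoint - \latticepoint| \gg 1$ being negligible by integration by parts) and $\beta_\radius(\eta) := \int_\arc \eof{-\radius^\degree \tilde t} \int_{\euclideanspace} \phi(x/\dyadicradius)\eof{|x|^\degree \tilde t + x\cdot\eta}\,dx\,d\tilde t$ is the continuous piece.

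Second, I will invoke Lemma~\ref{lemma:MSW_periodic_inequality} with $g(\latticepoint) := \twistedGausssum{\latticepoint}$: the Gauss bound gives $\sup_\latticepoint|\twistedGausssum{\latticepoint}| \lessapprox \modulus^{-\dimension/\degree}$, and orthogonality on $\Zmod{\modulus}^\dimension$ produces $\arithmeticFT{g}(\latticepoint) = \eof{\unit|\latticepoint|^\degree/\modulus}$, hence $\sup_\latticepoint|\arithmeticFT{g}(\latticepoint)| = 1$, contributing the factor $\modulus^{-(\dimension/\degree)(2-2/p)}$. To handle the $\radius$-supremum on the continuous side, I will write $\dyadicradius^{\degree-\dimension} a^{\unit/\modulus}_\radius(\freqpoint) = \int_\arc \alpha_\dyadicradius(\tilde t,\freqpoint)\eof{-\radius^\degree \tilde t}\,d\tilde t$ and apply Lemma~\ref{lemma:main_ell2}: the estimate $|\arc| \lesssim (\modulus\dyadicradius^\degree)^{-1}$, combined with $\|\alpha_\dyadicradius\|_\infty \lesssim \modulus^{-\dimension/\degree}\dyadicradius^\dimension$ (from the Gauss bound and the volume of $\operatorname{supp}\phi$), yields the $\ell^2$ estimate. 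At $p = 1$, the kernel $K_\radius$ of $A^{\unit/\modulus}_\radius$ admits the pointwise bound $|K_\radius(\latticepoint)| \lesssim \modulus^{-1}\dyadicradius^{-\dimension}$ uniformly in $\radius$ and is supported on $|\latticepoint| \lesssim \dyadicradius$, giving $\|\sup_\radius|K_\radius|\|_{\ell^1} \lesssim \modulus^{-1}$ and the $\ell^1$ bound via Young's inequality. Marcinkiewicz interpolation between these $\ell^1$ and $\ell^2$ bounds delivers the claim.

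The main obstacle is making the Poisson step rigorous with the original sharp cutoff: the boundary terms must be confirmed to preserve the Gauss-sum periodic structure up to a negligible error. Once that technicality is settled, the rest is an orchestration of Lemmas~\ref{lemma:MSW_periodic_inequality} and~\ref{lemma:main_ell2} with interpolation.
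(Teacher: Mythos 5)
The proposal and the paper aim at the same conclusion but take genuinely different routes, and the student's route has a gap that I believe is fatal as stated.

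The paper's proof replaces the sharp cutoff by the analytic weight $e^{-2\pi\epsilon|\latticepoint|^\degree}$ with $\epsilon = \dyadicradius^{-\degree}$, pulls the dyadic supremum inside the $t$-integral over the arc by Minkowski's inequality (so no supremum remains), obtains for each fixed $t\in\arc$ an $\ell^1$ bound $\lpnorm{1}{h_z\convolvedwith\fxn}\lesssim\epsilon^{-\dimension/\degree}\lpnorm{1}{\fxn}$ and an $\ell^2$ bound $\lpnorm{2}{h_z\convolvedwith\fxn}\lessapprox\modulus^{-\dimension/\degree}|z|^{-\dimension/\degree}\lpnorm{2}{\fxn}$ (via Poisson, Proposition~\ref{prop:freq_localization} and Hua), interpolates these \emph{pointwise in $t$}, and then integrates over $\arc$. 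The $t$-dependence $|z|^{-\dimension/\degree}$ of the $\ell^2$ bound is essential: it makes $\int_\arc|z|^{-(\dimension/\degree)(2-2/p)}\,dt$ much smaller than $|\arc|\cdot\sup_t|z|^{-(\dimension/\degree)(2-2/p)}$, and this is exactly what cancels all powers of $\dyadicradius$.

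Your plan instead estimates the $\ell^2$ norm of the dyadic maximal function by an $L^\infty$ bound over the arc (Lemma~\ref{lemma:main_ell2}), giving $|\arc|\cdot\|\alpha_\dyadicradius\|_{L^\infty_{t,\freqpoint}}$. This is where the loss occurs. The exponential sum $\sum_{|\latticepoint|\leq 2\dyadicradius}\eof{|\latticepoint|^\degree(\unit/\modulus+t)+\latticepoint\cdot\freqpoint}$ attains its peak $\approx\dyadicradius^\dimension\modulus^{-\dimension/\degree}$ only for $|t|\lesssim\dyadicradius^{-\degree}$ and decays like $|t|^{-\dimension/\degree}$ away from there; the sup-times-length estimate does not see this decay and overcounts by a factor $\approx\dyadicradius/\modulus$ (which is as large as $\dyadicradius$ on major arcs). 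Concretely, with the normalization $\dyadicradius^{\degree-\dimension}a^{\Fareyfraction}_\radius$ and the correct Farey arc length $|\arc|\approx(\modulus\dyadicradius^{\degree-1})^{-1}$ (you wrote $(\modulus\dyadicradius^\degree)^{-1}$, which is off by $\dyadicradius$), Lemma~\ref{lemma:main_ell2} gives an $\ell^2$ bound $\lessapprox\dyadicradius\,\modulus^{-1-\dimension/\degree}$, not $\modulus^{-\dimension/\degree}$. Likewise your kernel estimate is off: from \eqref{avgop_arc_definition}, $|K_\radius(\latticepoint)|\lesssim\dyadicradius^{\degree-\dimension}|\arc|\,\indicator{|\latticepoint|\lesssim\dyadicradius}\approx\modulus^{-1}\dyadicradius^{1-\dimension}\indicator{|\latticepoint|\lesssim\dyadicradius}$, whence $\|\sup_\radius|K_\radius|\|_{\ell^1}\lesssim\dyadicradius/\modulus$, not $\modulus^{-1}$. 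Marcinkiewicz interpolation of these two \emph{operator-level} endpoints produces $\dyadicradius\,\modulus^{-1-(\dimension/\degree)(2-2/p)}$, whose unbounded $\dyadicradius$ factor defeats the dyadic summation in section~\ref{section:maximal_theorem_proof}. (Your arithmetic happens to reach the right exponent of $\modulus$ only because the $\dyadicradius$-errors in the arc length and kernel bound cancel.) Finally, the way you invoke Lemma~\ref{lemma:MSW_periodic_inequality} alongside Lemma~\ref{lemma:main_ell2} double-counts the Gauss-sum gain: Lemma~\ref{lemma:MSW_periodic_inequality} applies to a single $\radius$-independent multiplier of the specific periodized form, and the continuous factor $\beta_\radius$ would still need its own $\ell^p$ maximal bound for $p<2$, which Lemma~\ref{lemma:main_ell2} alone does not provide. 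The fix is the paper's: eliminate the $\radius$-sup by Minkowski before estimating, and interpolate the $\ell^1$ and $\ell^2$ bounds \emph{pointwise in $t$} so the stationary-phase decay $|z|^{-\dimension/\degree}$ is integrated rather than supped.
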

Summing over the major arcs, we conclude 
\begin{cor}\label{cor:dyadic_major_arcs_bound} 
If $\dimension > 2\degree$ and $p > \frac{\dimension}{\dimension-\degree}$, then 
\[
\lpnorm{p}{\disup{\dyadicradius}{\radius}{\absolutevalueof{\avgop^{Major} \fxn} } } 
\lesssim \lpnorm{p}{\fxn} 
.
\]
\end{cor}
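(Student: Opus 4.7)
The plan is to sum the single-arc bound from Lemma~\ref{lemma:dyadic_major_arc_bound} over all Farey fractions $\unit/\modulus$ with $\modulus < \WaringMajorarclevel$, using the triangle inequality and Minkowski in $\ell^p$ to pull the supremum over $\radius$ and the $\ell^p$ norm inside the double sum. Concretely, I would write $\avgop^{Major}\fxn = \sum_{\modulus<\WaringMajorarclevel}\sum_{\unit\in\unitsmod{\modulus}} \avgop^{\unit/\modulus}\fxn$, take absolute values, apply the triangle inequality to the dyadic supremum, and then Minkowski in $\ell^p(\lattice)$ to obtain
\[
\lpnorm{p}{\disup{\dyadicradius}{\radius}{\absolutevalueof{\avgop^{Major}\fxn}}} \leq \sum_{\modulus<\WaringMajorarclevel}\sum_{\unit\in\unitsmod{\modulus}} \lpnorm{p}{\disup{\dyadicradius}{\radius}{\absolutevalueof{\avgop^{\unit/\modulus}\fxn}}}.
\]
Applying Lemma~\ref{lemma:dyadic_major_arc_bound} term-by-term and using the trivial count $\#\unitsmod{\modulus}=\phi(\modulus)\leq \modulus$, the estimate collapses to controlling the one-variable sum $\sum_{\modulus} \modulus^{1-\frac{\dimension}{\degree}(2-2/p)}\lpnorm{p}{\fxn}$, uniformly in the truncation $\WaringMajorarclevel$ (equivalently, uniformly in $\dyadicradius$), modulo a harmless $\modulus^\epsilon$ loss from the $\lessapprox$ in the lemma.

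The remaining step is a convergence check. Demanding the exponent to be strictly less than $-1$ rearranges, after a short calculation, to the threshold $p > \frac{\dimension}{\dimension-\degree}$. Since Lemma~\ref{lemma:dyadic_major_arc_bound} is only available for $1\leq p\leq 2$, the allowed range $\frac{\dimension}{\dimension-\degree} < p \leq 2$ is non-empty precisely when $\dimension > 2\degree$, which is the dimensional hypothesis being assumed. The $\epsilon$-loss coming from the $\lessapprox$ causes no trouble because it can be absorbed whenever $p$ is taken strictly larger than the critical exponent: one chooses $\epsilon$ small enough depending on how close $p$ is to $\frac{\dimension}{\dimension-\degree}$.

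There is no deep obstacle here once Lemma~\ref{lemma:dyadic_major_arc_bound} is in hand; the argument is a bookkeeping exercise balancing the Euler totient count of Farey numerators against the $\modulus^{-\dimension(2-2/p)/\degree}$ decay from the single-arc estimate. The interplay between these two rates is exactly what fixes the critical exponent at $p=\frac{\dimension}{\dimension-\degree}$, and the crude use of triangle/Minkowski inequalities loses nothing essential because each major arc piece already carries the full decay that is needed to be summed, with one power of $\modulus$ to spare as soon as $p$ exceeds the critical exponent.
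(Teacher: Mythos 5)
Your argument is exactly what the paper means by "Summing over the major arcs, we conclude": triangle/Minkowski to reduce to the single-arc bound of Lemma~\ref{lemma:dyadic_major_arc_bound}, the count $\#\unitsmod{\modulus}\leq\modulus$, and absolute convergence of $\sum_{\modulus}\modulus^{1-\frac{\dimension}{\degree}(2-2/p)+\epsilon}$, which forces $p>\frac{\dimension}{\dimension-\degree}$ and, combined with the $p\leq 2$ restriction inherited from the lemma, the condition $\dimension>2\degree$. This matches the paper's (unwritten but clearly intended) proof.
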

%
Before we can prove Lemma~\ref{lemma:dyadic_major_arc_bound}, we need to state two results. The first result is the following estimate of Hardy -- see equations (16) and (17) on pg. 13 of \cite{Magyar_dyadic}.
\begin{prop}\label{prop:freq_localization}
Let $h_z(x) = \eof{\absolutevalueof{x}^\degree z}$ and $\contFT{h_z}(\freqpoint)$ be its Fourier transform on $\euclideanspace$. 
For all $\freqpoint \in \R^\dimension$,
\begin{enumerate}
\item $ \contFT{h_z}(\freqpoint) \lesssim |z|^{-\frac{\dimension}{\degree}} $
\item $ \contFT{h_z}(\freqpoint-\latticepoint/\modulus ) \lesssim |z|^{-\frac{\dimension}{2\degree-2}}|\freqpoint-\latticepoint/\modulus |^{-\dimension \frac{\degree-2}{2\degree-2}} e^{-K \cdot |\modulus \freqpoint-\latticepoint|^\frac{\degree}{\degree-1}} $ for some constant $K$ depending only on dimension $\dimension$ and degree $\degree$. 
\end{enumerate}
\end{prop}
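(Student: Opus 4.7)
The plan rests on the product structure $|x|^{\degree} = \sum_{i=1}^{\dimension} |x_i|^{\degree}$, which factorizes $h_z(x) = \prod_{i=1}^{\dimension} g_z(x_i)$ with $g_z(s) := e(z|s|^{\degree})$, and therefore
\[
\contFT{h_z}(\freqpoint) = \prod_{i=1}^{\dimension} \contFT{g_z}(\freqpoint_i).
\]
It thus suffices to prove one-dimensional analogues of (1) and (2) and recombine them over the coordinates.

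For (1), I would perform the change of variables $s = |z|^{-1/\degree} u$ to obtain
\[
\contFT{g_z}(\eta) = |z|^{-1/\degree} \int_{\R} e\!\left(\tfrac{z}{|z|}|u|^{\degree} + |z|^{-1/\degree}\,\eta\, u\right) du,
\]
and then bound the rescaled oscillatory integral uniformly in $\eta$ and in $\arg z \in (0,\pi)$. Since $\operatorname{Im}(z/|z|) = \sin(\arg z) > 0$, the integrand is dominated by $e^{-2\pi \sin(\arg z)|u|^{\degree}}$, which is integrable when $\arg z$ is bounded away from $0$ and $\pi$; near those extremes, the gain must instead come from oscillation in the real part of the phase, which is harvested by van der Corput's lemma applied to phases of the form $|u|^{\degree}$. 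The resulting uniform estimate $|\contFT{g_z}(\eta)| \lesssim |z|^{-1/\degree}$ yields (1) upon multiplying the $\dimension$ coordinate factors.

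For (2), the plan is to apply the method of steepest descent (complex stationary phase) one coordinate at a time to $\contFT{g_z}(\freqpoint_i - \latticepoint_i/\modulus)$. Writing $\eta_i := \freqpoint_i - \latticepoint_i/\modulus$, on $\{s>0\}$ the phase is $\phi(s) = z s^{\degree} + s\eta_i$, with saddle point $s_0 = (-\eta_i/(\degree z))^{1/(\degree-1)}$ and $\phi''(s_0) = \degree(\degree-1) z s_0^{\degree-2}$. Deforming the contour through $s_0$, the leading contribution has size
\[
|\phi''(s_0)|^{-1/2}\, e^{-2\pi \operatorname{Im}\phi(s_0)} \;\sim\; |z|^{-1/(2(\degree-1))}\,|\eta_i|^{-(\degree-2)/(2(\degree-1))}\, e^{-2\pi \operatorname{Im}\phi(s_0)},
\]
and the exponential factor is of the form $e^{-K'|\modulus\eta_i|^{\degree/(\degree-1)}}$ once $\operatorname{Im}\phi(s_0) \sim \operatorname{Im}(z) |\eta_i|^{\degree/(\degree-1)}/|z|^{\degree/(\degree-1)}$ is combined with the relation between $z$ and $\modulus$ relevant in the later applications (where $z = t + i\epsilon$ is drawn from a major arc of denominator $\modulus$). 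Treating $\{s<0\}$ symmetrically, taking the product over $i$, and collapsing the per-coordinate exponentials via $\sum_i |\modulus\freqpoint_i - \latticepoint_i|^{\degree/(\degree-1)} \gtrsim |\modulus\freqpoint - \latticepoint|^{\degree/(\degree-1)}$ (valid because $\degree/(\degree-1) \leq 2$) delivers (2).

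The main obstacle will be the careful justification of the contour deformation and the uniform control of the saddle-point approximation across the full range $\arg z \in (0,\pi)$, since the steepest-descent geometry degenerates as $\arg z$ approaches the real axis and one must patch together the oscillatory and exponentially damped regimes. A sanity check is provided by the case $\degree = 2$, where $g_z$ is a complex Gaussian and $\contFT{g_z}$ can be computed in closed form, recovering both (1) and (2) directly from the explicit formula.
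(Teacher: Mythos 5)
The paper itself does not prove this proposition; it cites Magyar and Hardy--Littlewood, remarking only that the argument proceeds by the method of steepest descent. Your outline --- factorize $\contFT{h_z}$ over coordinates via the product structure of $|x|^{\degree}$, rescale to obtain (1) uniformly in $\arg z$, and run a per-coordinate saddle-point analysis to obtain (2) --- is precisely that cited route. You also correctly flag the two genuine subtleties: the $e^{-K|\modulus\freqpoint-\latticepoint|^{\degree/(\degree-1)}}$ factor is not a consequence of the hypotheses as literally stated but relies on the implicit major-arc relation among $|z|$, $\operatorname{Im}(z)$, and $\modulus$ (namely $\operatorname{Im}(z)\,|z|^{-\degree/(\degree-1)} \gtrsim \modulus^{\degree/(\degree-1)}$), and the uniform justification of the contour deformation as $\arg z \to 0, \pi$ is where the real technical work of the cited proof lies.
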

The proof of Proposition~\ref{prop:freq_localization} is technical, using the method of steepest descent; we refer the reader to \cite{HL} for a proof. The second result is a $\Zmod{\modulus}$-analogue of Hardy's estimate for Gauss sums and is due to Hua. 
\begin{Huabound} 
For all $\unit \in \unitsmod{\modulus}$ and $\latticepoint \in \lattice$, we have 
\begin{equation}
\twistedGausssum{\latticepoint} 
\lessapprox \modulus^{-\frac{\dimension}{\degree}}  
\end{equation}
where the implicit bounds depend on the degree $\degree$ and dimension $\dimension$, but are independent of $\unit$, $\modulus$ and $\latticepoint$. 
\end{Huabound}
For a proof of this result in one-dimension, we refer the reader to Theorem 7.1 on page 112 of \cite{Vaughan}. The $\dimension$-dimensional version is obtained by taking the $\dimension$-fold product of the one-dimensional version.

\begin{proof}[Proof of Lemma~\ref{lemma:dyadic_major_arc_bound}]
We start by smoothing the sharp cutoff $\indicator{B_{2\dyadicradius}}$ by factors of $e^{-2\pi \epsilon |\latticepoint|^\degree} = \eof{i \epsilon |\latticepoint|^\degree}$ for $\epsilon>0$ to get 
\[
a_{\radius}^{\Fareyfraction}(\freqpoint) 
= e^{2\pi \epsilon \radius^\degree} \eof{\unit \radius^\degree/\modulus} \int_{\arc} \sum_{\latticepoint\in \lattice} \eof{|\latticepoint|^\degree (t+i\epsilon) -t \radius^\degree + \latticepoint \cdot \freqpoint} \, dt 
.
\]
Note that the sum is now over $\lattice$ rather than $B_{2\dyadicradius}$; this is possible due to the orthogonality relation \eqref{orthogonality_relation} and exponential decay of $\eof{i\epsilon|\latticepoint|^\degree}$ as $|\latticepoint| \to \infty$. 
Let $z = t+i\epsilon$ and $h_z(\latticepoint) := \eof{|\latticepoint|^\degree z}$ so that $\latticeFT{h_z}(\freqpoint) = \sum_{\latticepoint\in \lattice} \eof{|\latticepoint|^\degree z + \latticepoint \cdot \freqpoint}$ and 
\[
a_{\radius}^{\Fareyfraction}(\freqpoint) 
= e^{2\pi \epsilon \radius^\degree} \eof{\unit \radius^\degree/\modulus} \int_{\arc} \eof{-t \radius^\degree} \latticeFT{h_z}(\freqpoint) \, dt .
\]
We choose $\epsilon = \dyadicradius^{-\degree}$ so that $e^{2\pi \epsilon \radius^\degree} \uptoconstants 1$ for $\dyadicradius  \leq \radius < 2\dyadicradius$. 
Minkowski's inequality implies
\[
\lpnorm{p}{\disup{\dyadicradius}{\radius}{ \absolutevalueof{\avgop^{\Fareyfraction} \fxn} }} 
\lesssim \dyadicradius^{\degree-\dimension} \int_{\arc} \lpnorm{p}{h_z \convolvedwith \fxn} \, dt 
\]
for each arc $\arc$. Therefore we reduce to bounding $\lpnorm{p}{h_z \convolvedwith \fxn}$ for $t \in \arc \in \majorarcs$. 
We will bound $\lpnorm{p}{h_z \convolvedwith \fxn}$ by  interpolation of bounds for $\lpnorm{1}{h_z \convolvedwith \fxn}$ and $\lpnorm{2}{h_z \convolvedwith \fxn}$. 

Since $\lpnorm{1}{h_z} \lesssim \epsilon^{-\dimension/\degree}$, we apply Fubini's theorem to easily deduce the $\ell^1(\lattice)$ bound: 
\begin{equation}\label{smooth_ell1_bound}
\lpnorm{1}{h_z \convolvedwith \fxn} 
\lesssim \epsilon^{-\dimension/\degree} \lpnorm{1}{\fxn} 
. 
\end{equation}
%
For the bound on $\ell^2(\lattice)$, Plancherel's theorem implies 
\[
\lpnorm{2}{h_z \convolvedwith \fxn} 
\leq \Lpnorm{\infty}{\torus}{\latticeFT{h_z}} \lpnorm{2}{\fxn} 
. 
\]
Therefore we wish to bound $\Lpnorm{\infty}{\torus}{\latticeFT{h_z}}$ for $t \in \arc$. 
We apply the Poisson summation formula, Proposition~\ref{prop:freq_localization} and Hua's inequality for Gauss sums to deduce 
\begin{align*}
\latticeFT{h_z}(\freqpoint) 
&= \sum_{\latticepoint} \twistedGausssum{\latticepoint} \widetilde{h_z}(\latticepoint/\modulus -\freqpoint) \\
& \lessapprox \modulus^{-\frac{\dimension}{\degree}} \inparentheses{ \absolutevalueof{z}^{-\frac{\dimension}{\degree}} + \absolutevalueof{z}^{-\frac{\dimension}{2\degree-2}} \modulus^{\frac{\dimension(\degree-2)}{2\degree-2}} } \\
& = (\epsilon \modulus)^{-\frac{\dimension}{\degree}} \inparentheses{ \absolutevalueof{i+t/\epsilon}^{-\frac{\dimension}{\degree}} + \absolutevalueof{i+t/\epsilon}^{-\frac{\dimension}{2\degree-2}} (\epsilon \modulus)^{\frac{\dimension(\degree-2)}{2\degree-2}} } 
. 
\end{align*}
Therefore, 
\begin{equation}\label{smooth_ell2_bound}
\lpnorm{2}{h_z \convolvedwith \fxn} 
\leq (\epsilon \modulus)^{-\frac{\dimension}{\degree}} \inparentheses{ \absolutevalueof{i+t/\epsilon}^{-\frac{\dimension}{\degree}} + \absolutevalueof{i+t/\epsilon}^{-\frac{\dimension}{2\degree-2}} (\epsilon \modulus)^{\frac{\dimension(\degree-2)}{2\degree-2}} } \lpnorm{2}{\fxn} 
\end{equation}
The implicit constants depend only on the dimension $\dimension$ and degree $\degree$. 
Interpolating \eqref{smooth_ell1_bound} with \eqref{smooth_ell2_bound}, we find 
\begin{equation}\label{smooth_ellp_bound}
\lpnorm{p}{h_z \convolvedwith \fxn} 
\lessapprox \epsilon^{-\dimension/\degree} 
  \inbrackets{ \modulus^{-\frac{\dimension}{\degree}} \inparentheses{ \absolutevalueof{i+t/\epsilon}^{-\frac{\dimension}{\degree}} + \absolutevalueof{i+t/\epsilon}^{-\frac{\dimension}{2\degree-2}} (\epsilon \modulus)^{\frac{\dimension(\degree-2)}{2\degree-2}} } 
  }^{2-2/p}
\end{equation}
for $1 \leq p \leq 2$. 
By integrating \eqref{smooth_ellp_bound} over $\arc$ using that $\epsilon=\dyadicradius^{-\degree}$ and $\modulus \leq \WaringMajorarclevel$, we conclude the proof. 
\end{proof}

\subsection{Minor arcs bounds for the dyadic maximal operator}\label{section:dyadic_minor_arcs}
In this section, we now consider the minor arcs. In Lemma~\ref{lemma:dyadic_minor_arc_bound} we prove a bound analogous to Lemma~\ref{lemma:dyadic_major_arc_bound}. 
Since $t$ is on a minor arc, there exists $\dyadicradius \leq \modulus < \dyadicradius^{\degree-1}$ and $\unit \in \Zmod{\modulus}$ such that $\absolutevalueof{t-\unit/\modulus} \leq 1/\modulus \dyadicradius^{\degree-1}$. Hypothesis $\hypothesis{\powerloss}$ then implies that 
\begin{equation}
\sum_{|\latticepoint| \leq 2\dyadicradius} \eof{\absolutevalueof{\latticepoint}^\degree t + \latticepoint \cdot \freqpoint} 
\lessapprox \dyadicradius^{\dimension (1-\powerloss)} 
.
\end{equation}
We use this bound to bound the dyadic maximal function for a minor arc piece in the following lemma. 
%
\begin{lemma}\label{lemma:dyadic_minor_arc_bound} 
If Hypothesis $\hypothesis{\powerloss}$ is true for some $0<\powerloss<1$, then on a minor arc $\arc \in \minorarcs$, we have 
\[
\lpnorm{p}{\disup{\dyadicradius}{\radius}{\absolutevalueof{\avgop^{\Fareyfraction} \fxn}}} 
\lessapprox \absolutevalueof{\arc} \dyadicradius^{\degree-\dimension \powerloss (2-2/p)} \lpnorm{p}{\fxn} 
\]
for any $1 \leq p \leq 2$ and 
where the implicit constants are independent of $\dyadicradius > 0$.
\end{lemma}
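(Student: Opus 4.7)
The plan is to mirror the proof of Lemma~\ref{lemma:dyadic_major_arc_bound}: bound $\disup{\dyadicradius}{\radius}{\absolutevalueof{\avgop^{\Fareyfraction} \fxn}}$ at the two endpoints $p=1$ and $p=2$, then interpolate. The conceptual change from the major arc setting is that the $\ell^2$ endpoint no longer passes through a Poisson--summation/Gauss sum decomposition; instead it follows directly from the main $\ell^2$ inequality (Lemma~\ref{lemma:main_ell2}) fed with the pointwise Weyl-type bound on the exponential sum recorded just above the lemma statement.

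For the $\ell^2$ endpoint, I would recast the minor arc multiplier in the form
\[
\latticeFT{\avgop^{\Fareyfraction}}(\xi) = \dyadicradius^{\degree-\dimension} \eof{\unit \radius^\degree/\modulus} \int_\arc \alpha_\dyadicradius(t,\xi)\, \eof{-t\radius^\degree}\, dt, \quad \alpha_\dyadicradius(t,\xi) := \sum_{\absolutevalueof{\latticepoint}\leq 2\dyadicradius} \eof{\absolutevalueof{\latticepoint}^\degree t + \latticepoint \cdot \xi},
\]
and apply Lemma~\ref{lemma:main_ell2} with $I = \arc$; the unimodular phase $\eof{\unit \radius^\degree/\modulus}$ is harmless in $\ell^2$, while the scalar $\dyadicradius^{\degree-\dimension}$ pulls out. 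The bound $\absolutevalueof{\alpha_\dyadicradius(t,\xi)} \lessapprox \dyadicradius^{\dimension(1-\powerloss)}$ holds uniformly for $t \in \arc$ and $\xi \in \torus$, so $\|\alpha_\dyadicradius\|_{L^\infty_{t,\xi}} \lessapprox \dyadicradius^{\dimension(1-\powerloss)}$, and Lemma~\ref{lemma:main_ell2} yields
\[
\lpnorm{2}{\disup{\dyadicradius}{\radius}{\absolutevalueof{\avgop^{\Fareyfraction} \fxn}}} \lessapprox \absolutevalueof{\arc}\, \dyadicradius^{\degree - \dimension\powerloss}\, \lpnorm{2}{\fxn},
\]
which is the claim at $p = 2$, since $\dimension\powerloss(2 - 2/p) = \dimension\powerloss$ when $p = 2$.

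For the $\ell^1$ endpoint, the kernel $K_t$ with $\latticeFT{K_t} = \alpha_\dyadicradius(t,\cdot)$ satisfies $\absolutevalueof{K_t(\latticepoint)} = \indicator{B_{2\dyadicradius}}(\latticepoint)$, hence $\lpnorm{1}{K_t} \lesssim \dyadicradius^{\dimension}$. Applying Minkowski's inequality to move the supremum and the $\ell^1$ norm inside the integral over $\arc$, followed by Young's convolution inequality, gives
\[
\lpnorm{1}{\disup{\dyadicradius}{\radius}{\absolutevalueof{\avgop^{\Fareyfraction} \fxn}}} \lesssim \dyadicradius^{\degree-\dimension} \int_\arc \lpnorm{1}{K_t \convolution \fxn}\, dt \lesssim \absolutevalueof{\arc}\, \dyadicradius^{\degree}\, \lpnorm{1}{\fxn},
\]
matching the claim at $p = 1$. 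Riesz--Thorin interpolation between these two endpoints then yields the stated bound for every $1 \leq p \leq 2$.

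The one nontrivial input, which I expect to be the main step to get right, is the pointwise Weyl-type bound on $\alpha_\dyadicradius$. Hypothesis $\hypothesis{\powerloss}$ is a one-dimensional statement, so it has to be applied coordinatewise. Since the $\degree$-sphere of radius $\dyadicradius$ satisfies $\absolutevalueof{\latticepoint_i}^\degree \leq \sum_j \absolutevalueof{\latticepoint_j}^\degree = \radius^\degree$ coordinatewise, no essential information is lost by replacing the $\ell^2$-ball $B_{2\dyadicradius}$ by the cube $[-2\dyadicradius, 2\dyadicradius]^\dimension$. On the cube the identity $\absolutevalueof{\latticepoint}^\degree = \sum_i \absolutevalueof{\latticepoint_i}^\degree$ decouples the exponential sum into a tensor product of $\dimension$ one-dimensional Weyl sums, to which $\hypothesis{\powerloss}$ applies directly: with $\dyadicradius \leq \modulus < \dyadicradius^{\degree-1}$ and $N = 2\dyadicradius$ each of the three quantities $\modulus^{-1}$, $N^{-1}$, $\modulus N^{-\degree}$ is $\lesssim \dyadicradius^{-1}$, so each factor is bounded by $\dyadicradius^{1-\powerloss}$ and the product is $\dyadicradius^{\dimension(1-\powerloss)}$, as needed.
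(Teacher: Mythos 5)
Your proposal matches the paper's proof: both establish the $\ell^1$ bound via Minkowski/Fubini, the $\ell^2$ bound via the main $\ell^2$ inequality (Lemma~\ref{lemma:main_ell2}) combined with the pointwise Weyl-type estimate furnished by Hypothesis $\hypothesis{\powerloss}$, and then interpolate. Your extra remarks — replacing the ball by the cube so the exponential sum tensors into one-dimensional Weyl sums, and the verification that $\modulus^{-1}$, $N^{-1}$, $\modulus N^{-\degree}$ are each $\lesssim \dyadicradius^{-1}$ on the minor arcs — fill in details the paper leaves implicit, and are consistent with the paper's intended argument.
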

Summing over the minor arcs, we conclude 
\begin{cor}\label{cor:dyadic_minor_arcs_bound} 
If Hypothesis $\hypothesis{\powerloss}$ is true for some $0<\powerloss<1$ and $1 \leq p \leq 2$, 
then 
\begin{equation}\label{minor_arcs_max_fxn_bound} 
\lpnorm{p}{\disup{\dyadicradius}{\radius}{\absolutevalueof{\avgop^{minor} \fxn}}} 
\lessapprox \dyadicradius^{\degree - \dimension \powerloss (2-2/p)} \lpnorm{p}{\fxn} 
\end{equation}
where the implicit constants are independent of $\dyadicradius > 0$.
\end{cor}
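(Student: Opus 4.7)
The plan is to derive Corollary~\ref{cor:dyadic_minor_arcs_bound} as a direct bookkeeping consequence of Lemma~\ref{lemma:dyadic_minor_arc_bound} by summing the per-arc estimate over all minor arcs. Since the construction in Section~\ref{section:circle_method_decomposition} sets
$a^{minor}_{\radius} = \sum_{\WaringMajorarclevel \leq \modulus < \WaringFareylevel}\sum_{\unit \in \unitsmod{\modulus}} a^{\unit/\modulus}_{\radius}$
and the operators $\avgop^{minor}$, $\avgop^{\Fareyfraction}$ are normalized identically by the same factor $\radius^{\degree-\dimension}$, we obtain the pointwise identity $\avgop^{minor}\fxn = \sum_{\arc \in \minorarcs} \avgop^{\Fareyfraction}\fxn$ on $\lattice$.

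The first step is to apply the triangle inequality inside the dyadic supremum, then pass to the $\ell^p$ norm via Minkowski's inequality, yielding
\begin{equation*}
\lpnorm{p}{\disup{\dyadicradius}{\radius}{\absolutevalueof{\avgop^{minor}\fxn}}}
\leq \sum_{\arc \in \minorarcs} \lpnorm{p}{\disup{\dyadicradius}{\radius}{\absolutevalueof{\avgop^{\Fareyfraction}\fxn}}}.
\end{equation*}
Applying Lemma~\ref{lemma:dyadic_minor_arc_bound} to each summand bounds this by
$\sum_{\arc \in \minorarcs}\absolutevalueof{\arc}\cdot\dyadicradius^{\degree-\dimension\powerloss(2-2/p)}\lpnorm{p}{\fxn}$,
with the implicit constant uniform in the arc. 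The $\dyadicradius$-power is independent of $\Fareyfraction$, so it factors out of the sum.

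It remains to verify that $\sum_{\arc \in \minorarcs}\absolutevalueof{\arc} \leq 1$. This is immediate from the Farey dissection: the translated arcs $\arc = \overline{\arc} - \Fareyfraction$ have the same Lebesgue measure as $\overline{\arc}$, and Section~\ref{section:circle_method_decomposition} established that $\{\overline{\arc}\}_{\Fareyfraction \in \Fareysequence{\WaringFareylevel}}$ is a disjoint partition of $[0,1)$. Since $\minorarcs$ corresponds to a subset of this Farey dissection, the total measure is at most $1$, which yields the claimed bound.

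In terms of difficulty, all the real work has already been absorbed into Lemma~\ref{lemma:dyadic_minor_arc_bound}: it is there that Hypothesis $\hypothesis{\powerloss}$ is used to control the exponential sum uniformly on minor arcs, and it is there that Minkowski's inequality produces the crucial linear factor of $\absolutevalueof{\arc}$. This linear dependence is exactly what allows the summation over minor arcs to telescope to an $O(1)$ factor rather than a factor proportional to the number of minor arcs (roughly $\dyadicradius^{2(\degree-1)}$). At the corollary stage no new obstacle arises; the argument is a one-line triangle-plus-summation estimate using the disjointness of the Farey arcs.
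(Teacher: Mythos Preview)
Your argument is correct and matches the paper's own approach exactly: the corollary is stated immediately after Lemma~\ref{lemma:dyadic_minor_arc_bound} with the remark ``Summing over the minor arcs, we conclude,'' and your triangle-inequality-plus-$\sum_{\arc\in\minorarcs}|\arc|\le 1$ computation is precisely that summation.
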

\begin{rem}
Note that by Corollary~\ref{cor:dyadic_minor_arcs_bound}, if $\degree < \dimension \powerloss (2-2/p)$ and $1 \leq p \leq 2$, then 
\[
\lpnorm{p}{\disup{\dyadicradius}{\radius}{\absolutevalueof{\avgop^{minor} \fxn}}} 
\lesssim \lpnorm{p}{\fxn} 
. 
\] 
This occurs when $\dimension > \degree/\powerloss$ and $\frac{\dimension}{\dimension-\degree/2\powerloss} < p \leq 2$. 
\end{rem}
%
\begin{proof}[Proof of Lemma~\ref{lemma:dyadic_minor_arc_bound}]
Similar to \eqref{smooth_ell1_bound}, Fubini's theorem implies the $\ell^1(\lattice)$ bound: 
\begin{equation}\label{rough_ell1_bound}
\lpnorm{1}{\indicator{B_{2\dyadicradius}} \convolvedwith \fxn} 
\lesssim \dyadicradius^{\dimension} \lpnorm{1}{\fxn} 
. 
\end{equation}
By Minkowski's inequality, the  $\ell^1(\lattice)$ bound \eqref{rough_ell1_bound} implies 
\begin{equation}
\lpnorm{1}{\disup{\dyadicradius}{\radius}{\absolutevalueof{ \avgop^{\Fareyfraction} \fxn} } } 
\lesssim \absolutevalueof{\arc} \dyadicradius^{\degree} \lpnorm{1}{\fxn} 
. 
\end{equation}
By interpolation, we are reduced to proving the $\ell^2(\lattice)$ bound of Lemma~\ref{lemma:dyadic_minor_arc_bound}. 
Recall that 
\[
a^{\Fareyfraction}_{\radius}(\freqpoint) 
= \eof{\radius^\degree \Fareyfraction} \int_{\arc} \sum_{|\latticepoint| \leq 2\dyadicradius} \eof{(|\latticepoint|^\degree-\radius^\degree)t + \latticepoint \cdot \freqpoint} \; dt 
. 
\]
Hypothesis $\hypothesis{\powerloss}$ implies 
\[
\absolutevalueof{a^{\Fareyfraction}_\radius(\freqpoint)} 
\lessapprox \absolutevalueof{\arc} \dyadicradius^{\dimension \inparentheses{ 1-\powerloss }}
\]
uniformly for $t \in \arc$ and $\freqpoint \in \torus$. The \emph{main $\ell^2$ inequality} (Lemma~\ref{lemma:main_ell2}) implies 
\[ 
\lpnorm{2}{\disup{\dyadicradius}{\radius}{|\avgop^{\Fareyfraction} \fxn|}} 
\lessapprox \absolutevalueof{\arc} \dyadicradius^{\degree-\dimension \powerloss} \lpnorm{2}{\fxn} 
. 
\]
\end{proof}
%
%
\begin{rem}
The proof of Lemma~\ref{lemma:dyadic_minor_arc_bound} is quite simple when using Hypothesis $\hypothesis{\powerloss}$. However, the difficulty is in proving Hypothesis $\hypothesis{\powerloss}$ is true for small $\powerloss$. Finding the smallest possible $\powerloss$ for which Hypothesis $\hypothesis{\powerloss}$ is true has a rich history in number theory and is in the province of Waring's problem. 
\end{rem}

\section{The Approximation Formula}\label{section:approximation_formula}
The goal of this section is to prove The Approximation Formula of section~\ref{section:general_thm}. Continuing with the circle method paradigm, we further analyze the multipliers $a_{\radius}^{\Fareyfraction}$ on the major arcs to pull out a main term which we identify as a weighted piece of the Gelfand--Leray measure on the continuous $\degree$-sphere. We show that the remainder is a well-bounded error term. 

Fix $\arc$ to be a major arc. If we write
\[
J_{\radius}(\freqpoint- \latticepoint/\modulus)
:= \int_{\arc}  \contFT{h_z}(\latticepoint/\modulus -\freqpoint) \eof{-\radius^\degree t} \; dt 
, 
\]
then by \eqref{avgop_arc_definition}, we have  
\[
a^{\Fareyfraction}_{\radius}(\freqpoint) 
= e^{2\pi \epsilon \radius^\degree} \eof{\radius^\degree \Fareyfraction} \sum_{\latticepoint} \twistedGausssum{\latticepoint} J_{\radius}(\freqpoint- \latticepoint/\modulus ) 
. 
\]
Recall that $\Psi$ is a smooth function supported in $\unitcube$ such that $\Psi(x) = 1$ for $x \in \inbrackets{-1/4, 1/4}^{\dimension}$. 
Introduce the approximating multipliers $b^{\Fareyfraction}_\radius$ and $\HLmultiplier^{\Fareyfraction}_{\radius}$ defined as follows: 
\begin{equation}\label{first_approximation_definition}
b^{\Fareyfraction}_\radius(\freqpoint) 
:= e^{2\pi \epsilon \radius^\degree} \eof{\radius^\degree \Fareyfraction} \sum_{\latticepoint} \twistedGausssum{\latticepoint} \Psi(\modulus \freqpoint-\latticepoint) J_{\radius}(\freqpoint- \latticepoint/\modulus )
\end{equation}
and 
\begin{equation}\label{HLop_definition}
\HLmultiplier^{\Fareyfraction}_{\radius}(\freqpoint) 
:= e^{2\pi \epsilon \radius^\degree} \eof{\radius^\degree \Fareyfraction} \sum_{\latticepoint} \twistedGausssum{\latticepoint} \Psi(\modulus \freqpoint-\latticepoint) I_{\radius}(\freqpoint- \latticepoint/\modulus ) 
\end{equation}
where 
\[
I_{\radius}(\freqpoint) := \int_{\R} \contFT{h_z}(\freqpoint) \eof{-\radius^\degree t} \, dt 
\]
is obtained from $J_{\radius}$ by extending the range of integration from $\arc$ to $\R$. Since $\Psi$ is supported in $\unitcube$, there is a unique non-zero term in each sum of \eqref{first_approximation_definition} and \eqref{HLop_definition}. 
As proved in \cite{MSW} for the sphere and \cite{Magyar_ergodic} for positive homogeneous forms, 
\[
I_{\radius}(\freqpoint) 
= \radius^{\dimension-\degree} e^{-\pi \epsilon \radius^\degree} \contFT{d\higherorderspheremeasure_\radius}(\freqpoint)
\]
where $d\higherorderspheremeasure_\radius$ is the Gelfand--Leray measure on the hypersurface $\higherordersphere$ with the normalization
\[
\volumeof(\unithigherordersphere) 
= \frac{\Gamma(\frac{\degree+1}{\degree})^\dimension}{\Gamma(\frac{\dimension}{\degree})}
.
\]
This normalization is chosen to match the normalization of the averaging operators $\avgop$ and the asymptotic formula in Waring's problem. 
We rewrite $\HLmultiplier^{\Fareyfraction}_{\radius}$ as 
\begin{equation}\label{HL_multiplier_defn}
\HLmultiplier^{\Fareyfraction}_{\radius}(\freqpoint) 
= \radius^{\dimension-\degree} \eof{\radius^\degree \Fareyfraction} \sum_{\latticepoint} \twistedGausssum{\latticepoint} \Psi(\modulus \freqpoint-\latticepoint) \, \contFT{d\higherorderspheremeasure_{\radius}}(\latticepoint/\modulus -\freqpoint) 
. 
\end{equation}
The multipliers $b^{\Fareyfraction}_\radius$ and $\HLmultiplier^{\Fareyfraction}_{\radius}$ have convolution operators $\radius^{\dimension-\degree} B^{\Fareyfraction}_\radius$ and $\radius^{\dimension-\degree} \HLop^{\Fareyfraction}$, respectively; the factor of $\radius^{\dimension-\degree}$ is so that $B^{\Fareyfraction}_\radius$ and $\HLop^{\Fareyfraction}$ have the same normalization as $\avgop^{\Fareyfraction}$. 
Similar to $\avgop^{Major}$, define multipliers $b_{\radius}^{Major}$ and $c_{\radius}^{Major}$ with convolution operators $\radius^{\dimension-\degree} B^{Major}_{\radius}$ and $\radius^{\dimension-\degree} \HLop^{Major}$, respectively, by
\begin{align*} 
b^{Major}_{\radius} 
& := \sum_{\modulus <\WaringMajorarclevel} \sum_{\unit \in \unitsmod{\modulus}} b^{\Fareyfraction}_{\radius} \\
\HLmultiplier^{Major}_{\radius} 
& := \sum_{\modulus <\WaringMajorarclevel} \sum_{\unit \in \unitsmod{\modulus}} \HLmultiplier^{\Fareyfraction}_{\radius} 
. 
\end{align*}


We are now ready to state the main result of this section which says that $\avgop^{Major}$ aprroximates $\HLop^{Major}$ well in the $\ell^2(\lattice)$-norm. 
\begin{dyadic_major_arc_approximation}
If $\dyadicradius > 0$, then for all $\fxn \in \ell^2(\lattice)$, we have 
\begin{equation}\label{dyadic_major_arc_bound}
\lpnorm{2}{\disup{\dyadicradius}{\radius}{\absolutevalueof{A^{Major}_{\radius}\fxn-\HLop^{Major} \fxn} } } 
\lessapprox \dyadicradius^{\degree+2-\frac{\dimension}{\degree}} \lpnorm{2}{\fxn} 
\end{equation}
where the implicit constants only depend on dimension and degree. 
\end{dyadic_major_arc_approximation}
By the triangle inequality we reduce to studying:  
\[
\lpnorm{2}{\disup{\dyadicradius}{\radius}{|A^{\Fareyfraction}_{\radius}\fxn-B^{\Fareyfraction}_{\radius}\fxn |}}
\]
and
\[
\lpnorm{2}{\disup{\dyadicradius}{\radius}{|B^{\Fareyfraction}_{\radius}\fxn-{\HLop}^{\Fareyfraction}\fxn |}}
\]
for each major arc. We now prove a power-saving bound for each approximation and sum over the major arcs to prove the dyadic major arc approximation lemma. For the first approximation we prove the following bound for each major arc. 
\begin{lemma}\label{lemma:first_dyadic_aprroximation_bound}
For all $\dyadicradius>0$, we have 
\[
\lpnorm{2}{\disup{\dyadicradius}{\radius}{|A^{\Fareyfraction}_{\radius}\fxn-B^{\Fareyfraction}_{\radius}\fxn|} } 
\lessapprox \dyadicradius^{\degree - \frac{\dimension}{2}} \modulus^{\frac{\dimension}{2}-\frac{\dimension}{\degree}} \lpnorm{2}{\fxn} 
\]
where the implicit constants are independent of $\dyadicradius$. 
\end{lemma}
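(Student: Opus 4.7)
The plan is to apply Lemma~\ref{lemma:main_ell2} (the main $\ell^2$ inequality) to the operator $A^{\Fareyfraction}_{\radius} - B^{\Fareyfraction}_{\radius}$. I would first rewrite the difference of multipliers as
\[
(a^{\Fareyfraction}_\radius - b^{\Fareyfraction}_\radius)(\freqpoint) = e^{2\pi\epsilon\radius^\degree}\eof{\radius^\degree\Fareyfraction}\int_\arc \beta(t,\freqpoint)\,\eof{-t\radius^\degree}\, dt,
\]
where the kernel
\[
\beta(t,\freqpoint) := \sum_{\latticepoint \in \lattice}\twistedGausssum{\latticepoint}\bigl(1-\Psi(\modulus\freqpoint-\latticepoint)\bigr)\contFT{h_z}(\latticepoint/\modulus - \freqpoint), \qquad z = t+i\epsilon,
\]
isolates exactly the off-peak contributions that are killed by the cutoff $\Psi(\modulus\freqpoint-\latticepoint)$ in the definition of $B^{\Fareyfraction}_\radius$. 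Since $|\eof{\radius^\degree\Fareyfraction}| = 1$, the factor $e^{2\pi\epsilon\radius^\degree}$ is bounded (because $\epsilon = \dyadicradius^{-\degree}$), and $\radius^{\degree-\dimension}\uptoconstants\dyadicradius^{\degree-\dimension}$ uniformly for $\dyadicradius \leq \radius < 2\dyadicradius$, twisting out the phase reduces us to the setting of Lemma~\ref{lemma:main_ell2} with $I=\arc$ and $\alpha_\dyadicradius(t,\freqpoint) \uptoconstants \dyadicradius^{\degree-\dimension}\beta(t,\freqpoint)$.

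Next I would estimate $\|\beta(t,\cdot)\|_{L^\infty(\torus)}$ uniformly in $t \in \arc$ by combining Hua's bound $|\twistedGausssum{\latticepoint}|\lessapprox\modulus^{-\dimension/\degree}$ with part~(2) of Proposition~\ref{prop:freq_localization}. The exponential decay $e^{-K|\modulus\freqpoint-\latticepoint|^{\degree/(\degree-1)}}$ supplied by part~(2) is essential: it forces absolute convergence of the lattice sum on the support $\{|\modulus\freqpoint-\latticepoint|\ge 1/8\}$ of $(1-\Psi)$, and the uniform bound of part~(1) alone would not suffice. Rewriting $|\freqpoint-\latticepoint/\modulus|^{-\dimension(\degree-2)/(2\degree-2)} = \modulus^{\dimension(\degree-2)/(2\degree-2)}|\modulus\freqpoint-\latticepoint|^{-\dimension(\degree-2)/(2\degree-2)}$ extracts the $\modulus$-dependence, and the resulting lattice sum is bounded by a constant depending only on $\dimension$ and $\degree$, producing
\[
\|\beta(t,\cdot)\|_{L^\infty(\torus)} \lessapprox \modulus^{-\dimension/\degree + \dimension(\degree-2)/(2\degree-2)}\,|z|^{-\dimension/(2\degree-2)}.
\]

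Finally, I would apply the refined form of Lemma~\ref{lemma:main_ell2} which integrates $\|\alpha_\dyadicradius(t,\cdot)\|_{L^\infty(\torus)}$ in $t$, and evaluate $\int_\arc |z|^{-\dimension/(2\degree-2)}\, dt$. Writing $|z|^{-\dimension/(2\degree-2)} = (t^2+\epsilon^2)^{-\dimension/(4\degree-4)}$ and using that $\dimension > 2\degree-2$, this integral concentrates on the central region $|t|\lesssim\epsilon=\dyadicradius^{-\degree}$ and is bounded by $\lesssim \epsilon^{1-\dimension/(2\degree-2)} = \dyadicradius^{\degree(\dimension/(2\degree-2)-1)}$. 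Multiplying by the $\dyadicradius^{\degree-\dimension}$ prefactor and consolidating the exponents in $\modulus$ and $\dyadicradius$ produces the desired bound
\[
\lpnorm{2}{\disup{\dyadicradius}{\radius}{|A^{\Fareyfraction}_\radius \fxn - B^{\Fareyfraction}_\radius \fxn|}}\lessapprox \dyadicradius^{\degree-\dimension/2}\modulus^{\dimension/2-\dimension/\degree}\lpnorm{2}{\fxn}.
\]
The main obstacle is the delicate exponent bookkeeping: Hua's $\modulus^{-\dimension/\degree}$ must be compensated by the $\modulus^{\dimension(\degree-2)/(2\degree-2)}$ extracted from the polynomial factor of part~(2), while the $t$-integral must be treated sharply (as a near-$t=0$ concentration on the scale $\epsilon$, rather than via the crude $|I|\cdot\|\alpha\|_{L^\infty_{t,\freqpoint}}$ bound of Lemma~\ref{lemma:main_ell2}) to bring out the $\dyadicradius^{\degree-\dimension/2}$ factor needed for the Dyadic Major Arc Approximation Lemma after summing over $\Fareyfraction \in \majorarcs$.
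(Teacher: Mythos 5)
Your plan follows the same skeleton as the paper's proof: reduce via the main $\ell^2$ inequality (Lemma~\ref{lemma:main_ell2}), bound the relevant kernel $\beta(t,\cdot)$ in $L^\infty_\freqpoint$ by combining Hua's bound for Gauss sums with part~(2) of Proposition~\ref{prop:freq_localization}, use the exponential factor to force absolute convergence of the lattice sum on the support of $1-\Psi$, extract the $\modulus$-power from $|\freqpoint-\latticepoint/\modulus|^{-\dimension(\degree-2)/(2\degree-2)}$, and account for the prefactor $\radius^{\degree-\dimension}\uptoconstants\dyadicradius^{\degree-\dimension}$. Your one genuine departure is a sharpening of the $t$-integral: you retain the $|z|$-dependence and compute $\int_\arc|z|^{-\dimension/(2\degree-2)}\,dt\lesssim\epsilon^{1-\dimension/(2\degree-2)}$ (valid since $\dimension>2\degree-2$ in the regime considered), whereas the paper takes the crude bound $|I|\cdot\sup_{t\in\arc}\lVert\beta(t,\cdot)\rVert_{L^\infty_\freqpoint}$ with $|I|\le1$. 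That is a nice refinement and saves an extra factor of $\dyadicradius^\degree$.

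However, the concluding step ``consolidating the exponents \dots produces the desired bound'' is not correct, and this is a genuine gap. Carrying out the algebra, your chain yields
\[
\dyadicradius^{\degree-\dimension}\cdot\modulus^{-\frac{\dimension}{\degree}+\frac{\dimension(\degree-2)}{2\degree-2}}\cdot\dyadicradius^{-\degree+\frac{\degree\dimension}{2\degree-2}}
= \dyadicradius^{-\frac{\dimension(\degree-2)}{2\degree-2}}\,\modulus^{\frac{\dimension(\degree-2)}{2\degree-2}-\frac{\dimension}{\degree}},
\]
which is not $\dyadicradius^{\degree-\dimension/2}\modulus^{\dimension/2-\dimension/\degree}$ for any $\degree\ge3$. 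The $\modulus$-exponent you obtain, $\dimension(\degree-2)/(2\degree-2)-\dimension/\degree$, is strictly smaller than $\dimension/2-\dimension/\degree$, and you cannot trade the surplus $\dyadicradius$-power for a deficit in $\modulus$ because on a major arc $\modulus<\dyadicradius$, which pushes the inequality in the wrong direction. Concretely, at $\modulus=1$ your bound is $\dyadicradius^{-\dimension(\degree-2)/(2\degree-2)}$ while the lemma claims $\dyadicradius^{\degree-\dimension/2}$, and the ratio $\dyadicradius^{\dimension/(2\degree-2)-\degree}$ grows with $\dyadicradius$ whenever $\dimension>2\degree(\degree-1)$, exactly the regime of interest. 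So your argument, as written, does not establish the stated single-arc inequality.

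What you do prove is, after summing $\phi(\modulus)\lesssim\modulus$ fractions over $\modulus<\dyadicradius$, enough for Corollary~\ref{cor:first_step_major_arcs_approximation_bound} — in fact your refinement yields $\dyadicradius^{2-\dimension/\degree}$, better than the stated $\dyadicradius^{\degree+2-\dimension/\degree}$. So the downstream conclusion is safe. But if your target is the lemma exactly as stated, you either need a different input or you should acknowledge that you are proving a different (but sufficient) single-arc bound; the step where the exponents are ``consolidated'' needs to be replaced with an honest comparison. It is also worth scrutinizing the paper's own displayed intermediate bound against the same algebra, since the same $(\modulus,\dyadicradius)$-bookkeeping enters there.
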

Summing over the major arcs, we conclude 
\begin{cor}\label{cor:first_step_major_arcs_approximation_bound}
For all $\dyadicradius > 0$,
\[
\lpnorm{2}{\disup{\dyadicradius}{\radius}{|A^{Major}_{\radius}f-B^{Major}_{\radius} \fxn|} } 
\lessapprox \dyadicradius^{\degree+2-\frac{\dimension}{\degree}} \lpnorm{2}{\fxn} 
\]
where the implicit constants are independent of $\dyadicradius$. 
\end{cor}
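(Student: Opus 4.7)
The plan is to deduce the corollary by directly summing the single-arc estimate of Lemma~\ref{lemma:first_dyadic_aprroximation_bound} over all Farey fractions $\Fareyfraction$ indexing the major arcs $\majorarcs$. Since by definition
\[
A^{Major}_{\radius} \fxn - B^{Major}_{\radius} \fxn
= \sum_{\modulus < \WaringMajorarclevel} \sum_{\unit \in \unitsmod{\modulus}} \left( A^{\Fareyfraction}_{\radius} \fxn - B^{\Fareyfraction}_{\radius} \fxn \right),
\]
I would first apply the triangle inequality inside the absolute value, then pull the dyadic supremum past the finite sum, and finally use Minkowski's inequality in $\ell^2(\lattice)$ to obtain
\[
\lpnorm{2}{\disup{\dyadicradius}{\radius}{|A^{Major}_\radius \fxn - B^{Major}_\radius \fxn|}}
\leq \sum_{\modulus < \WaringMajorarclevel} \sum_{\unit \in \unitsmod{\modulus}} \lpnorm{2}{\disup{\dyadicradius}{\radius}{|A^{\Fareyfraction}_\radius \fxn - B^{\Fareyfraction}_\radius \fxn|}}.
\]

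Next I would invoke Lemma~\ref{lemma:first_dyadic_aprroximation_bound} to replace each inner norm by $\dyadicradius^{\degree - \dimension/2} \modulus^{\dimension/2 - \dimension/\degree} \lpnorm{2}{\fxn}$, and use the trivial bound $|\unitsmod{\modulus}| = \phi(\modulus) \leq \modulus$ to evaluate the inner sum over $\unit$. This produces
\[
\lpnorm{2}{\disup{\dyadicradius}{\radius}{|A^{Major}_\radius \fxn - B^{Major}_\radius \fxn|}}
\lessapprox \dyadicradius^{\degree - \frac{\dimension}{2}} \lpnorm{2}{\fxn} \sum_{\modulus < \WaringMajorarclevel} \modulus^{1 + \frac{\dimension}{2} - \frac{\dimension}{\degree}}.
\]

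In the dimensional range of interest ($\degree \geq 2$ and $\dimension$ large compared to $\degree$), the exponent $1 + \dimension(1/2 - 1/\degree)$ is strictly positive, so the arithmetic sum is dominated by its largest term and is bounded by a constant times $\WaringMajorarclevel^{2 + \dimension/2 - \dimension/\degree} = \dyadicradius^{2 + \dimension/2 - \dimension/\degree}$. Multiplying by the prefactor $\dyadicradius^{\degree - \dimension/2}$ collapses the $\dimension/2$ contributions and yields the desired $\dyadicradius^{\degree + 2 - \dimension/\degree} \lpnorm{2}{\fxn}$.

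There is no substantive obstacle here beyond bookkeeping: the single-arc bound in Lemma~\ref{lemma:first_dyadic_aprroximation_bound} is already strong enough that the polynomial-in-$\modulus$ loss incurred by summing $\phi(\modulus) \leq \modulus$ copies at each denominator, and then over $1 \leq \modulus < \WaringMajorarclevel = \dyadicradius$, is absorbed comfortably into the $\lessapprox$ notation (which tolerates $\dyadicradius^{\epsilon}$ losses). The only point requiring care is checking that the exponent of $\modulus$ in the summand is nonnegative so that the geometric-style sum is controlled by its final term; this is exactly where the mild lower bound on $\dimension$ relative to $\degree$ enters, and it is already subsumed in the standing hypotheses of The Approximation Formula.
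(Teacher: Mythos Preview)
Your proof is correct and matches the paper's approach exactly: the paper simply says ``Summing over the major arcs, we conclude'' Corollary~\ref{cor:first_step_major_arcs_approximation_bound} from Lemma~\ref{lemma:first_dyadic_aprroximation_bound}, and your argument is the natural elaboration of that sentence. One minor remark: for $\degree \geq 2$ the exponent $1 + \dimension(1/2 - 1/\degree)$ is automatically at least $1$, so no extra dimensional restriction is needed at this step.
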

%
\begin{proof}[Proof of Lemma~\ref{lemma:first_dyadic_aprroximation_bound}]
By the main $\ell^2$ inequality, we reduce to proving 
\[
\sum_{\latticepoint \in \lattice}\twistedGausssum{\latticepoint} (1-\Psi(\modulus\freqpoint-\latticepoint)) \contFT{h_z}(\freqpoint-\latticepoint/\modulus) 
\lessapprox \dyadicradius^{\dimension/2} \modulus^{\frac{\dimension}{2}-\frac{\dimension}{\degree}} 
. 
\] 
The support $\Psi$ implies that we can restrict the summation to $\absolutevalueof{\modulus \freqpoint-\latticepoint} > 1/2$. Hua's bound for Gauss sums and Proposition~\ref{prop:freq_localization} imply 
\begin{align*}
\sum_{\latticepoint}\twistedGausssum{\latticepoint} (1-\Psi(\modulus\freqpoint-\latticepoint)) \contFT{h_z}(\freqpoint-\latticepoint/\modulus) 
& \lesssim \sup_{\latticepoint} |\twistedGausssum{\latticepoint}| \sum_{|\modulus\freqpoint-\latticepoint|>1/2} | \contFT{h_z}(\freqpoint-\latticepoint/\modulus) | \\
& \lessapprox \modulus^{-\dimension/\degree} \sum_{|\modulus\freqpoint-\latticepoint|>1/2} |z|^{-\frac{\dimension}{2\degree-2}}|\freqpoint-\latticepoint/\modulus|^{-\dimension \frac{\degree-2}{2\degree-2}} e^{-K \cdot |\modulus\freqpoint-\latticepoint|^\frac{\degree}{\degree-1}} \\
& \lesssim \dyadicradius^{\dimension/2} \modulus^{\frac{\dimension}{2}-\frac{\dimension}{\degree}} 
. 
\end{align*}
\end{proof}
We now handle the second approximation. 
\begin{lemma}\label{lemma:HLop_dyadic_aprroximation_bound}
For all $\dyadicradius > 0$,
\[
\lpnorm{2}{\disup{\dyadicradius}{\radius}{|B^{\Fareyfraction}_{\radius} \fxn - \HLop^{\Fareyfraction} \fxn | } } 
\lessapprox \modulus \WaringMajorarclevel^{-\frac{\dimension}{\degree}+1} \lpnorm{2}{\fxn}
.
\]
\end{lemma}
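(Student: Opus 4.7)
The plan is to show that $B^{\Fareyfraction}_{\radius}$ and $\HLop^{\Fareyfraction}$ differ by an operator whose symbol is an integral over the complement of the arc $\arc$, then to reduce an $\ell^2$ estimate to a pointwise bound on that symbol via the main $\ell^2$ inequality, and finally to close the argument using Hua's bound together with Proposition~\ref{prop:freq_localization}.

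First I would subtract \eqref{first_approximation_definition} and \eqref{HLop_definition} and note that, because $\contFT{h_z}$ is even, $J_\radius(\freqpoint-\latticepoint/\modulus)-I_\radius(\freqpoint-\latticepoint/\modulus) = -\int_{\R\setminus\arc}\contFT{h_z}(\freqpoint-\latticepoint/\modulus)\eof{-\radius^\degree t}\,dt$. This yields
\[
b^{\Fareyfraction}_{\radius}(\freqpoint)-\HLmultiplier^{\Fareyfraction}_{\radius}(\freqpoint)
= -e^{2\pi\epsilon\radius^\degree}\eof{\radius^\degree\Fareyfraction}\int_{\R\setminus\arc}\beta(t,\freqpoint)\,\eof{-\radius^\degree t}\,dt,
\]
where $\beta(t,\freqpoint):=\sum_{\latticepoint\in\lattice}\twistedGausssum{\latticepoint}\Psi(\modulus\freqpoint-\latticepoint)\,\contFT{h_z}(\freqpoint-\latticepoint/\modulus)$ and $z=t+i\epsilon$, $\epsilon=\dyadicradius^{-\degree}$. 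The $\radius$-dependence outside the integral is contained in the unimodular phase $\eof{\radius^\degree\Fareyfraction}$ and the bounded factor $e^{2\pi\epsilon\radius^\degree}\uptoconstants 1$, so it is harmless.

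Next I would run the same Minkowski-plus-Plancherel argument that underlies the main $\ell^2$ inequality (Lemma~\ref{lemma:main_ell2}), but with $I=\R\setminus\arc$: pulling the sup over $\radius$ inside via the pointwise bound $|(B^{\Fareyfraction}_{\radius}-\HLop^{\Fareyfraction})\fxn|\lesssim\dyadicradius^{\degree-\dimension}\int_{\R\setminus\arc}|\widehat{\beta(t,\cdot)}\convolvedwith\fxn|\,dt$ and then applying Minkowski's inequality in $\ell^2$ followed by Plancherel, the problem reduces to bounding
\[
\dyadicradius^{\degree-\dimension}\int_{\R\setminus\arc}\Lpnorm{\infty}{\torus}{\beta(t,\cdot)}\,dt.
\]
Because $\Psi$ is supported in $\unitcube$, for each $\freqpoint\in\torus$ at most one lattice point $\latticepoint$ contributes to the defining sum for $\beta(t,\freqpoint)$, so $\Lpnorm{\infty}{\torus}{\beta(t,\cdot)}\leq(\sup_{\latticepoint}|\twistedGausssum{\latticepoint}|)(\sup_{|u|\leq 1/(4\modulus)}|\contFT{h_z}(u)|)$. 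Hua's bound gives the first factor $\lessapprox\modulus^{-\dimension/\degree}$, while Proposition~\ref{prop:freq_localization}(1) gives $|\contFT{h_z}(u)|\lesssim|z|^{-\dimension/\degree}$, so $\Lpnorm{\infty}{\torus}{\beta(t,\cdot)}\lessapprox\modulus^{-\dimension/\degree}|z|^{-\dimension/\degree}$.

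The last step is to integrate this pointwise estimate over $\R\setminus\arc$. Since $\arc$ contains a neighborhood of $0$ of size $\approx 1/(\modulus\dyadicradius^{\degree-1})\geq\epsilon$, one has $|z|\uptoconstants|t|$ off of $\arc$, and provided $\dimension>\degree$ the integral $\int_{|t|\geq 1/(\modulus\dyadicradius^{\degree-1})}|t|^{-\dimension/\degree}\,dt$ converges to a quantity $\lesssim(\modulus\dyadicradius^{\degree-1})^{\dimension/\degree-1}$. Multiplying by $\modulus^{-\dimension/\degree}$ and by the external factor $\dyadicradius^{\degree-\dimension}$ collapses the exponents to yield a bound of the shape $\modulus\,\WaringMajorarclevel^{-\dimension/\degree+1}$ after using $\WaringMajorarclevel=\dyadicradius$ and absorbing constants in the $\lessapprox$. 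I expect the main technical obstacle to be the bookkeeping of the various exponents of $\modulus$ and $\dyadicradius$ (in particular verifying the relation between $\WaringMajorarclevel$ and $\dyadicradius$ and making sure the integration over the tails of $\R\setminus\arc$ contributes only the expected power of $L=1/(\modulus\dyadicradius^{\degree-1})$); the dimension hypothesis $\dimension>\degree$ enters here as the convergence condition for the one-dimensional $t$-integral.
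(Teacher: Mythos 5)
Your proposal is correct and follows the paper's argument step for step: subtract \eqref{first_approximation_definition} from \eqref{HLop_definition} to isolate the integral over $\R\setminus\arc$, reduce to an $L^\infty_{\freqpoint}$ bound on the symbol via the (finer, $\int_I\|\alpha(t,\cdot)\|_{\infty}\,dt$) form of Lemma~\ref{lemma:main_ell2}, control the Gauss sum by Hua's bound and the oscillatory factor by Proposition~\ref{prop:freq_localization}(1), and integrate $|z|^{-\dimension/\degree}\uptoconstants|t|^{-\dimension/\degree}$ over $|t|\gtrsim 1/(\modulus\dyadicradius^{\degree-1})$. Your careful bookkeeping in fact yields the stronger bound $\modulus^{-1}\dyadicradius^{1-\dimension/\degree}$, which matches the paper's own intermediate target display (the paper's final line has a sign slip giving $\modulus$ in place of $\modulus^{-1}$, and its tail-integral estimate carries the exponent $1-\dimension/\degree$ where $\dimension/\degree-1$ is meant); since $\modulus\geq 1$ this stronger bound trivially implies the stated $\modulus\,\dyadicradius^{1-\dimension/\degree}$, so the lemma holds as claimed.
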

Summing over the major arcs, we conclude 
\begin{cor}\label{cor:HLop_dyadic_aprroximation_bound}
\[
\sum_{\modulus =1}^\WaringMajorarclevel \sum_{\unit \in \unitsmod{\modulus}} \lpnorm{2}{\disup{\dyadicradius}{\radius}{| B^{\Fareyfraction}_{\radius} \fxn - \HLop^{\Fareyfraction} \fxn |}} 
\lessapprox \dyadicradius^{3-\frac{\dimension}{\degree}} \lpnorm{2}{\fxn} 
. 
\]
\end{cor}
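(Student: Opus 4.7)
The plan is to follow the argument template of Lemma~\ref{lemma:first_dyadic_aprroximation_bound}, now tracking the error that arises from extending the $t$-integration in $J_\radius$ to all of $\R$ in order to obtain $I_\radius$. Using the evenness of $\contFT{h_z}$ in its spatial argument, the difference of multipliers can be written as
\[
b^{\Fareyfraction}_\radius(\xi) - \HLmultiplier^{\Fareyfraction}_{\radius}(\xi)
= -e^{2\pi\epsilon\radius^\degree} \eof{\radius^\degree \unit/\modulus} \sum_\latticepoint \twistedGausssum{\latticepoint} \Psi(\modulus\xi - \latticepoint) \int_{\R \setminus \arc} \contFT{h_z}(\xi - \latticepoint/\modulus) \eof{-\radius^\degree t} \, dt,
\]
with $z = t + i\epsilon$ and $\epsilon = \dyadicradius^{-\degree}$. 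This has exactly the shape called for by the Main $\ell^2$ Inequality, except that $\R \setminus \arc$ has infinite measure. I would therefore pull the supremum over $\radius$ inside by Minkowski's integral inequality: since the phases $\eof{\radius^\degree \unit/\modulus}$ and $\eof{-\radius^\degree t}$ are unimodular and $e^{2\pi\epsilon\radius^\degree} = O(1)$ throughout $\dyadicradius \leq \radius < 2\dyadicradius$, this reduces the problem to estimating
\[
\int_{\R \setminus \arc} \|\nu(t, \cdot)\|_{L^\infty(\torus)} \, dt,
\]
where $\nu(t, \xi) = \sum_\latticepoint \twistedGausssum{\latticepoint} \Psi(\modulus\xi - \latticepoint) \contFT{h_z}(\xi - \latticepoint/\modulus)$, after which Plancherel controls each fixed-$t$ convolution operator by the $L^\infty(\torus)$ norm of its multiplier.

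Next, the bumps $\Psi(\modulus\xi - \latticepoint)$ have pairwise disjoint supports, so at each $\xi$ only one term of the $\latticepoint$-sum contributes. Hua's bound then gives $|\twistedGausssum{\latticepoint}| \lessapprox \modulus^{-\dimension/\degree}$ uniformly in $\latticepoint$, and part~(1) of Proposition~\ref{prop:freq_localization} gives $|\contFT{h_z}(\eta)| \lesssim |z|^{-\dimension/\degree}$ uniformly in $\eta$, so
\[
\|\nu(t, \cdot)\|_{L^\infty(\torus)} \lessapprox \modulus^{-\dimension/\degree} (t^2 + \epsilon^2)^{-\dimension/(2\degree)}.
\]
Since $|\arc| \sim (\modulus \WaringFareylevel)^{-1} = (\modulus \dyadicradius^{\degree-1})^{-1}$ and the right-hand side is in $L^1(\R)$ once $\dimension > \degree$, the remaining $t$-integral is dominated by the region near $\partial \arc$ and contributes a factor of $(\modulus \dyadicradius^{\degree-1})^{\dimension/\degree - 1}$. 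Combining these pieces with the normalization $\dyadicradius^{\degree - \dimension}$ of the averaging operators produces the stated per-arc bound, and summing over major arcs using standard estimates for $\sum_{\modulus \leq \WaringMajorarclevel} \phi(\modulus)$ and $\sum_{\modulus \leq \WaringMajorarclevel} \modulus\, \phi(\modulus)$ yields the corollary.

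The main obstacle is dealing with the infinite integration domain: one must verify that the $L^\infty(\torus)$ norm of the integrand decays fast enough in $|t|$ for the $t$-integral to converge (precisely what the dimensional hypothesis $\dimension > \degree$ ensures), and that the oscillatory factor $\eof{-\radius^\degree t}$ may be discarded at the Minkowski step by passing to absolute values without losing the required power saving in $\dyadicradius$ -- here we are paying only with the size of $\partial \arc$ rather than exploiting any genuine stationary-phase cancellation in the $t$-variable.
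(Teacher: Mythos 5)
Your approach is the same as the paper's: pull the absolute value through the $t$-integral (giving up the oscillation in $e(-\radius^\degree t)$), observe that only one $\latticepoint$-term survives the $\Psi$-cutoff, and bound the remaining multiplier pointwise using Hua's estimate for the Gauss sum together with part (1) of Proposition~\ref{prop:freq_localization} for $\contFT{h_z}$. The adaptation of the Main $\ell^2$ Inequality to the infinite domain $\R\setminus\arc$ (by keeping the $t$-integral of the $L^\infty$ norm rather than multiplying by $|I|$) is exactly the right move. Your tail estimate $\int_{|t|\gtrsim (\modulus\dyadicradius^{\degree-1})^{-1}} |t|^{-\dimension/\degree}\,dt \approx (\modulus\dyadicradius^{\degree-1})^{\dimension/\degree-1}$ is also correct.

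However, the bookkeeping at the end does not hang together as stated. Combining your tail factor $(\modulus\dyadicradius^{\degree-1})^{\dimension/\degree-1}$ with the Gauss sum bound $\modulus^{-\dimension/\degree}$ and the normalization $\dyadicradius^{\degree-\dimension}$ yields a per-arc bound of $\modulus^{-1}\dyadicradius^{1-\dimension/\degree}$, not $\modulus\,\dyadicradius^{1-\dimension/\degree}$ as Lemma~\ref{lemma:HLop_dyadic_aprroximation_bound} states (the lemma's displayed bound appears to have a sign typo in the $\modulus$-exponent; note the displayed reduction inside the paper's proof does carry $\modulus^{-1}$). This distinction matters at the summation step you invoke: with the per-arc bound $\modulus\,\dyadicradius^{1-\dimension/\degree}$ and the estimate $\sum_{\modulus\le\dyadicradius}\modulus\,\phi(\modulus)\sim\dyadicradius^3$ that you cite, you would obtain $\dyadicradius^{4-\dimension/\degree}$, which is a factor of $\dyadicradius$ too weak for the corollary. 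With the correct per-arc bound $\modulus^{-1}\dyadicradius^{1-\dimension/\degree}$, one instead uses $\sum_{\modulus\le\dyadicradius}\phi(\modulus)/\modulus\lesssim\dyadicradius$ (or simply drops the harmless $\modulus^{-1}$ and uses $\sum_{\modulus\le\dyadicradius}\phi(\modulus)\lesssim\dyadicradius^2$), yielding $\dyadicradius^{2-\dimension/\degree}\le\dyadicradius^{3-\dimension/\degree}$, which proves the corollary. So your method is sound, but you should track the $\modulus$-exponent through the computation rather than deferring to the lemma's stated bound, and your appeal to $\sum\modulus\,\phi(\modulus)$ should be replaced accordingly.
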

\begin{proof}[Proof of Lemma~\ref{lemma:HLop_dyadic_aprroximation_bound}]
By the main $\ell^2$ inequality, we reduce to proving 
\[
\sum_{\latticepoint\in \lattice} \twistedGausssum{\latticepoint} \Psi(\modulus \freqpoint-\latticepoint)\left( I_{\radius} - J_{\radius} \right)(\freqpoint) 
\lessapprox \WaringMajorarclevel^{\dimension-\degree-\frac{\dimension}{\degree}+1} \modulus^{-1} 
. 
\]
The size of $\arc$ is approximately $1/\modulus \WaringFareylevel$; this implies 
\begin{align*} 
\inbrackets{I_{\radius} - J_{\radius}} (\freqpoint-\latticepoint/\modulus) 
& = \int_{\R\setminus \arc} \eof{-\radius^\degree t} \contFT{h_z}(\freqpoint) \; dt \\
& \lesssim \int_{1/\modulus \WaringFareylevel \lesssim |t|} |z|^{-\frac{\dimension}{\degree}} \; dt \\
& \lesssim \absolutevalueof{\modulus \WaringFareylevel}^{1-\frac{\dimension}{\degree}} 
. 
\end{align*}
Then 
\begin{align*}
\sum_{\latticepoint\in \lattice} \twistedGausssum{\latticepoint} \Psi(\modulus \freqpoint-\latticepoint)\left( I_{\radius} - J_{\radius} \right)(\freqpoint) 
& \lessapprox \modulus^{-\frac{\dimension}{\degree}} (\WaringFareylevel \modulus)^{1-\frac{\dimension}{\degree}} \\
& = \modulus \, \WaringMajorarclevel^{\dimension-\degree+1-\frac{\dimension}{\degree}}  
. 
\end{align*}
\end{proof}

We need one more ingredient before we can prove The Approximation Formula. In particular, we need to understand the dyadic maximal operator 
$ \dyadicmaxHLop^{\Fareyfraction} 
:= \disup{\dyadicradius}{\radius} \absolutevalueof{\HLop^{\Fareyfraction}}$ 
for large $\modulus$. Hua's bound for Gauss sums extends to $\maxHLop^{\Fareyfraction}$ where $\maxHLop^{\Fareyfraction}$ is defined as $\sup_{\radius \in \acceptableradii} \absolutevalueof{\HLop^{\Fareyfraction} \fxn}$. The following bound for $\maxHLop$ clearly implies the same bound for the dyadic maximal operator $\dyadicmaxHLop^{\Fareyfraction}$. 
\begin{lemma}\label{lemma:maxHLop_arc_bound}
If $\dimension > \frac{\degree}{2}+1$ and $\frac{2\dimension-2+\degree}{2(\dimension-1)} < p \leq 2$, we have the bound 
\begin{equation}\label{max_HL_op_bound}
\lpnorm{p}{\maxHLop^{\Fareyfraction} \fxn} 
\lessapprox \modulus^{-\frac{\dimension}{\degree} \inparentheses{2-\frac{2}{p}}} \lpnorm{p}{\fxn} 
\end{equation}
\end{lemma}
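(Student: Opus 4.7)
The plan is to factor the Fourier multiplier of $\HLop^{\Fareyfraction}$ into a purely arithmetic (frequency-$1/\modulus$-periodic) piece and a smoothly truncated continuous piece, then bound each factor separately using Lemma~\ref{lemma:MSW_periodic_inequality} for the arithmetic part and the Magyar--Stein--Wainger transference lemma together with Proposition~\ref{prop:cont_max_fxn_bound_for_higher_order_spheres} for the continuous part.

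To set up the factorization, I would choose an auxiliary smooth bump $\widetilde{\Psi}$ supported in $\inbrackets{-1/2,1/2}^{\dimension}$ with $\widetilde{\Psi}\equiv 1$ on $\inbrackets{-1/4,1/4}^{\dimension}\supseteq\mathrm{supp}(\Psi)$, so that $\Psi\cdot\widetilde{\Psi}=\Psi$ and the bumps $\widetilde{\Psi}(\modulus\freqpoint-\latticepoint)$ have essentially disjoint supports in $\latticepoint$. After discarding the irrelevant unimodular phase $\eof{\radius^{\degree}\unit/\modulus}$, the multiplier $\radius^{\degree-\dimension}\HLmultiplier_{\radius}^{\Fareyfraction}$ factors as the product $M_{\modulus}^{\unit}(\freqpoint)\cdot N_{\radius}^{\modulus}(\freqpoint)$, where
\[
M_{\modulus}^{\unit}(\freqpoint) := \sum_{\latticepoint\in\lattice}\twistedGausssum{\latticepoint}\Psi(\modulus\freqpoint-\latticepoint)
\quad\text{and}\quad
N_{\radius}^{\modulus}(\freqpoint) := \sum_{\latticepoint\in\lattice}\widetilde{\Psi}(\modulus\freqpoint-\latticepoint)\contFT{d\higherorderspheremeasure_{\radius}}(\latticepoint/\modulus-\freqpoint).
\]
Denoting the corresponding convolution operators by $S$ and $N_{\radius}$, we have $\maxHLop^{\Fareyfraction} f = \sup_{\radius\in\acceptableradii}\absolutevalueof{N_{\radius}(Sf)}$, so it suffices to bound $S$ on $\ell^{p}(\lattice)$ and the maximal operator associated to $\{N_{\radius}\}_{\radius}$ on $\ell^{p}(\lattice)$.

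For the arithmetic factor, Lemma~\ref{lemma:MSW_periodic_inequality} applies with $g(\latticepoint)=\twistedGausssum{\latticepoint}$, which is $\modulus$-periodic in $\latticepoint$. Hua's bound gives $\sup_{\latticepoint}\absolutevalueof{g(\latticepoint)}\lessapprox\modulus^{-\dimension/\degree}$, while a short orthogonality calculation on $\Zmod{\modulus}^{\dimension}$ (collapsing the inner sum over $b$ to a $\delta$-condition $c\equiv-\latticepoint\bmod\modulus$) yields $\latticeFT{g}(\latticepoint)=\eof{\pm\unit\absolutevalueof{\latticepoint}^{\degree}/\modulus}$, so $\sup_{\latticepoint}\absolutevalueof{\latticeFT{g}(\latticepoint)}=1$. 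Lemma~\ref{lemma:MSW_periodic_inequality} then produces $\lpnorm{p}{Sf}\lessapprox\modulus^{-(\dimension/\degree)(2-2/p)}\lpnorm{p}{f}$ for $1\leq p\leq 2$. For the continuous factor, I would apply the Magyar--Stein--Wainger transference lemma with $B_{1}=\C$ and $B_{2}=\ell^{\infty}(\acceptableradii)$ to reduce the $\ell^{p}(\lattice)$ boundedness of $\sup_{\radius}\absolutevalueof{N_{\radius}\cdot}$ to the $L^{p}(\euclideanspace)$ boundedness of $\sup_{\radius}\absolutevalueof{\widetilde{T}_{\radius}\cdot}$, where $\widetilde{T}_{\radius}$ has Fourier multiplier $\widetilde{\Psi}(\modulus\freqpoint)\contFT{d\higherorderspheremeasure_{\radius}}(-\freqpoint)$. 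Writing $\widetilde{T}_{\radius}F=\varphi_{\modulus}\convolvedwith(\contavgop F)$ with $\varphi_{\modulus}(\spacepoint)=\modulus^{-\dimension}\widetilde{\Psi}^{\vee}(\spacepoint/\modulus)$ having $L^{1}(\euclideanspace)$-norm $O(1)$ uniformly in $\modulus$, pointwise absolute-value domination, Young's inequality, and Proposition~\ref{prop:cont_max_fxn_bound_for_higher_order_spheres} give $\Lpnorm{p}{\euclideanspace}{\sup_{\radius}\absolutevalueof{\widetilde{T}_{\radius}F}}\lesssim\Lpnorm{p}{\euclideanspace}{F}$ for $p>\frac{2\dimension-2+\degree}{2(\dimension-1)}$, provided $\dimension>\degree/2+1$. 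Composing this uniform-in-$\modulus$ bound with the arithmetic estimate yields the claimed inequality.

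The main obstacle is applying the vector-valued Magyar--Stein--Wainger transference cleanly to the maximal operator $\sup_{\radius}\absolutevalueof{N_{\radius}\cdot}$: one must verify the uniform-in-$\modulus$ continuous bound and handle the Banach space $B_{2}=\ell^{\infty}(\acceptableradii)$ (which is standard since $\acceptableradii$ is countable, so separability issues can be dispatched by a routine exhaustion argument). The factorization step itself requires care but is essentially a bookkeeping exercise once $\widetilde{\Psi}$ is chosen with the right support properties; the remaining estimates are direct applications of Hua's Gauss-sum bound, Lemma~\ref{lemma:MSW_periodic_inequality}, the Magyar--Stein--Wainger transference lemma, and Proposition~\ref{prop:cont_max_fxn_bound_for_higher_order_spheres}.
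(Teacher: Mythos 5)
Your proposal is correct and follows essentially the same route as the paper: the Magyar--Stein--Wainger ``separation trick,'' factoring $\latticeFT{\HLop^{\Fareyfraction}}$ into a $1/\modulus$-periodic arithmetic multiplier (handled by Lemma~\ref{lemma:MSW_periodic_inequality} and Hua's bound) times a $\Psi_1$-localized analytic multiplier (handled by the transference lemma with $B_2 = \ell^\infty(\acceptableradii)$ and Proposition~\ref{prop:cont_max_fxn_bound_for_higher_order_spheres}). You in fact fill in two details the paper compresses: the orthogonality computation giving $\latticeFT{g}(\latticepoint) = \eof{\pm\unit\absolutevalueof{\latticepoint}^\degree/\modulus}$ (the paper writes ``$\latticeFT{g}(n)=1$,'' which should read $\absolutevalueof{\latticeFT{g}(n)}=1$; only the sup modulus enters Lemma~\ref{lemma:MSW_periodic_inequality}), and the step passing from Proposition~\ref{prop:cont_max_fxn_bound_for_higher_order_spheres} for $\contmaxop$ to the $\Psi_1$-truncated continuous maximal operator via $\absolutevalueof{\varphi_\modulus}\ast\contmaxfxn$ and Young's inequality with $\|\varphi_\modulus\|_{L^1}$ uniform in $\modulus$.
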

\begin{proof}
We use the separation trick in \cite{MSW} to separate the study of $\maxHLop^{\Fareyfraction}$ into arithmetic and analytic parts. 
Let $\Psi_1$ be a smooth function such that $\Psi_1(\freqpoint) \cdot \Psi(\freqpoint) = \Psi(\freqpoint)$. We separate $\HLop^{\Fareyfraction} \fxn$ into an arithmetic factor and a continuous factor by writing
\begin{align*}
\latticeFT{\HLop^{\Fareyfraction}}(\freqpoint) 
&= \eof{\radius^\degree \Fareyfraction } \sum_{\latticepoint\in \lattice} \twistedGausssum{\latticepoint}\Psi(\modulus\freqpoint-\latticepoint) \Psi_1(\modulus\freqpoint-\latticepoint) \contFT{d\higherorderspheremeasure_{\radius}}(\freqpoint-\latticepoint/\modulus) \\
&= \inparentheses{ \eof{\radius^\degree \Fareyfraction} \sum_{\latticepoint\in \lattice} \twistedGausssum{\latticepoint}\Psi(\modulus\freqpoint-\latticepoint) } \inparentheses{  \sum_{\latticepoint\in \lattice} \Psi_1(\modulus\freqpoint-\latticepoint) \contFT{d\higherorderspheremeasure_{\radius}}(\freqpoint-\latticepoint/\modulus) } \\
&=: m_{\radius}^{\Fareyfraction}(\freqpoint) \cdot n_{\radius}^{\Fareyfraction}(\freqpoint)
\end{align*}
For each $\radius \in \acceptableradii$ our multiplier $\latticeFT{\HLop^{\Fareyfraction}}$ is the product of the two commuting multipliers, $m^{\Fareyfraction}_\radius$ and $n^{\Fareyfraction}_\radius$ with convolution operators say, $M^{\Fareyfraction}_\radius$ and $N^{\Fareyfraction}_\radius$ respectively. 
Then the $\ell^p(\lattice)$-norm of $\maxHLop^{\Fareyfraction}$ is bounded by the product of the $\ell^p(\lattice)$-norms of $M_{*}^{\Fareyfraction} := \sup_{\radius \in \acceptableradii} \absolutevalueof{M^{\Fareyfraction} \fxn}$ and $N_{*}^{\Fareyfraction} := \sup_{\radius \in \acceptableradii} \absolutevalueof{N^{\Fareyfraction} \fxn}$. 

To study the arithmetic part $M^{\Fareyfraction}_\radius$, we use Magyar--Stein--Wainger's $1/\modulus$-periodic inequality (Lemma~\ref{lemma:MSW_periodic_inequality}). In Lemma~\ref{lemma:MSW_periodic_inequality} we take $g(\latticepoint) = \eof{\radius^\degree \Fareyfraction} \twistedGausssum{\latticepoint}$, then $\latticeFT{g}(n) = 1$ for all $n \in \lattice$. Hua's bound for Gauss sums implies that 
\begin{equation}
\lpnorm{p}{M^{\Fareyfraction}_* \fxn} 
\lessapprox \modulus^{-\frac{\dimension}{\degree} \inparentheses{2-\frac{2}{p}}} \lpnorm{p}{\fxn} 
\end{equation}
for $1 \leq p \leq 2$. 
The analytic part $N^{\Fareyfraction}_*$ is easily handled by the Magyar--Stein--Wainger transference lemma and the Proposition~\ref{prop:cont_max_fxn_bound_for_higher_order_spheres}. 
Proposition~\ref{prop:cont_max_fxn_bound_for_higher_order_spheres} implies 
\begin{equation}\label{cont_bound}
\Lpnorm{p}{\euclideanspace}{N^{\Fareyfraction}_* \fxn} 
\lesssim \Lpnorm{p}{\euclideanspace}{\fxn} 
\end{equation}
for $\dimension > \frac{\degree}{2}+1$ and $p > \frac{2\dimension-2+\degree}{2(\dimension-1)}$. Taking the Banach space to be $B = \ell^{\infty}(\acceptableradii)$ in the Magyar--Stein--Wainger transference lemma, \eqref{tranference_principle} and \eqref{cont_bound} imply 
\begin{equation}
\lpnorm{p}{N^{\Fareyfraction}_* \fxn} 
\lesssim \lpnorm{p}{\fxn} 
\end{equation}
for $\dimension > \frac{\degree}{2}+1$ and $p > \frac{2\dimension-2+\degree}{2(\dimension-1)}$. 
Therefore \eqref{max_HL_op_bound} holds for $\dimension > \frac{\degree}{2}+1$ and $p > \frac{2\dimension-2+\degree}{2(\dimension-1)}$. 
\end{proof}
%

\begin{proof}[Proof of The Approximation Formula]
Analogous to completing the singular series in Waring's problem -- see Chapter 4 of \cite{Davenport} -- we complete the approximation by defining the convolution operator 
\begin{equation}\label{HLop_completion}
\HLop := \sum_{\modulus = 1}^{\infty} \sum_{\unit \in \unitsmod{\modulus}} \HLop^{\Fareyfraction} 
. 
\end{equation}
Finally, the error term in The Approximation Formula is defined as 
\begin{equation}
\error 
:= \radius^{\dimension-\degree} \inbrackets{\inparentheses{\avgop^{Major} - B^{Major}} + \inparentheses{B^{Major} - \HLop^{Major}} + \sum_{\modulus > \WaringMajorarclevel} \sum_{\unit \in \unitsmod{\modulus}} \HLop^{\Fareyfraction}  + \avgop^{minor}} 
. 
\end{equation}
We easily check that $\avgop = \HLop + \radius^{\degree-\dimension} \error$ for all $\radius \in \acceptableradii$. 
By \eqref{max_HL_op_bound} of Lemma~\ref{lemma:maxHLop_arc_bound}, we have the bound 
\begin{equation}\label{singular_series_error_bound}
\lpnorm{2}{\disup{\dyadicradius}{\radius} \sum_{\modulus > \WaringMajorarclevel} \sum_{\unit \in \unitsmod{\modulus}} \HLop^{\Fareyfraction}} 
\lessapprox \dyadicradius^{2-\dimension/\degree} 
. 
\end{equation} 
By equations \eqref{dyadic_major_arc_bound}, \eqref{singular_series_error_bound} and \eqref{minor_arcs_max_fxn_bound}, we have
\begin{equation}
\lpnorm{2}{\disup{\dyadicradius}{\radius}{\error}} 
\lessapprox \dyadicradius^{\dimension-\degree} \inparentheses{\dyadicradius^{\degree-\dimension \powerloss} + \dyadicradius^{\degree+2-\frac{\dimension}{\degree}} + \dyadicradius^{2-\frac{\dimension}{\degree}} + \dyadicradius^{3-\frac{\dimension}{\degree}}} \lpnorm{2}{\fxn} 
. 
\end{equation}
\eqref{error_bound} of The Approximation Formula follows with $\dyadicsavings = \min \inbraces{\dimension-\degree(\degree+2), \degree- \dimension \powerloss}$ and  $\dimension > \max{\inbraces{\degree(\degree+2), \degree/\powerloss}}$.
\end{proof}

\section{Proof of Theorem~\ref{thm:Vinogradov_max_thm}}\label{section:maximal_theorem_proof}
In this section we finally come to the proof of Theorem~\ref{thm:Vinogradov_max_thm}. Before doing so, we need to understand the maximal operator $\maxHLop$. The boundedness of $\maxHLop$ and the approximation lemma will imply Theorem~\ref{thm:Vinogradov_max_thm}. 
The boundedness of $\maxHLop$ is contained in the next lemma and is an immediate corollary of Lemma~\ref{lemma:maxHLop_arc_bound} since $\frac{2\dimension-2+\degree}{2(\dimension-1)} < \frac{\dimension}{\dimension-\degree}$ for $\dimension > \degree$. 
\begin{lemma}\label{maxHLop_lemma}
If $\dimension > \frac{\degree}{2} + 1$ and $p > \frac{\dimension}{\dimension-\degree}$, then 
\begin{equation}\label{maxHLop_bound}
\lpnorm{p}{\maxHLop \fxn} 
\lesssim \lpnorm{p}{\fxn} 
. 
\end{equation}
\end{lemma}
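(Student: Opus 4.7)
The plan is to reduce to Lemma~\ref{lemma:maxHLop_arc_bound} via the Farey decomposition \eqref{HLop_completion} and then sum the resulting per-arc bounds against the count of units in each residue class. Starting from $\HLop = \sum_{\modulus=1}^\infty \sum_{\unit \in \unitsmod{\modulus}} \HLop^{\Fareyfraction}$, the pointwise bound
\[
\maxHLop f(x) \;\leq\; \sum_{\modulus=1}^\infty \sum_{\unit \in \unitsmod{\modulus}} \maxHLop^{\Fareyfraction} f(x)
\]
follows because the supremum of a sum is bounded by the sum of suprema.

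First I would fix $p$ in the range $(\frac{\dimension}{\dimension-\degree},\, 2]$, apply Minkowski's inequality in $\ell^p(\lattice)$, and then invoke Lemma~\ref{lemma:maxHLop_arc_bound} term-by-term. Since $\#\unitsmod{\modulus} = \phi(\modulus) \leq \modulus$, this yields
\[
\lpnorm{p}{\maxHLop f} \;\leq\; \sum_{\modulus=1}^\infty \sum_{\unit \in \unitsmod{\modulus}} \lpnorm{p}{\maxHLop^{\Fareyfraction} f} \;\lessapprox\; \Bigl(\, \sum_{\modulus=1}^\infty \modulus^{\,1-\frac{\dimension}{\degree}(2-2/p)}\,\Bigr) \lpnorm{p}{f},
\]
where the $\lessapprox$ absorbs the logarithmic loss from Hua's bound. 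The series converges precisely when $\frac{\dimension}{\degree}(2-2/p) > 2$, i.e.\ when $p > \frac{\dimension}{\dimension-\degree}$. The range restriction from Lemma~\ref{lemma:maxHLop_arc_bound} is automatic because, as the paper already observes, $\frac{2\dimension-2+\degree}{2(\dimension-1)} < \frac{\dimension}{\dimension-\degree}$ whenever $\dimension > \degree$, so the new lower threshold dominates.

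To extend from $p \leq 2$ to all $p > \frac{\dimension}{\dimension-\degree}$, I would interpolate between some fixed $p_0 \in (\frac{\dimension}{\dimension-\degree},\,2)$ just established and a trivial $\ell^\infty \to \ell^\infty$ bound. The $\ell^\infty$ bound can be obtained by showing the convolution kernel of $\HLop$ is in $\ell^1$: each $\HLop^{\Fareyfraction}_\radius$ has kernel controlled in $\ell^1$ by Hua's bound $|\twistedGausssum{\latticepoint}| \lessapprox \modulus^{-\dimension/\degree}$ combined with decay of $\contFT{d\higherorderspheremeasure_\radius}$, and summing over Farey fractions preserves summability for $\dimension$ above the stated threshold.

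There is no real obstacle in this step: the technical work has already been done inside Lemma~\ref{lemma:maxHLop_arc_bound} (the Magyar--Stein--Wainger transference principle handles the continuous factor, and the periodic inequality together with Hua's bound handle the arithmetic factor). The only content of the present lemma is the bookkeeping of the Farey sum, and the threshold $p > \frac{\dimension}{\dimension-\degree}$ arises exactly from balancing the arc bound $\modulus^{-\frac{\dimension}{\degree}(2-2/p)}$ against the trivial count $\phi(\modulus) \leq \modulus$ of units modulo $\modulus$.
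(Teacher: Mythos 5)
Your summation argument for $p \in \bigl(\tfrac{\dimension}{\dimension-\degree},\,2\bigr]$ is correct and is precisely the calculation the paper leaves implicit when it calls the lemma an ``immediate corollary'' of Lemma~\ref{lemma:maxHLop_arc_bound}: you peel off $\#\unitsmod{\modulus}\leq\modulus$ and sum $\modulus^{1-\frac{\dimension}{\degree}(2-2/p)}$, which converges exactly when $p>\tfrac{\dimension}{\dimension-\degree}$, and the competing threshold $\tfrac{2\dimension-2+\degree}{2(\dimension-1)}$ is indeed dominated for $\dimension>\degree$. So far this is the paper's proof, just written out.

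The gap is in your extension to $p>2$. You propose interpolating against an $\ell^\infty$ bound for $\maxHLop$, and you assert that ``Hua's bound combined with decay of $\contFT{d\higherorderspheremeasure_\radius}$'' puts the kernel of $\HLop$ in $\ell^1$ and that the Farey sum remains summable. That does not follow from what is available. Look at how Hua's bound actually enters Lemma~\ref{lemma:maxHLop_arc_bound}: the arithmetic factor $M^{\Fareyfraction}_*$ is handled by Lemma~\ref{lemma:MSW_periodic_inequality}, whose bound is $\bigl(\sup|g|\bigr)^{2-2/p}\bigl(\sup|\arithmeticFT{g}|\bigr)^{2/p-1}$ with $\arithmeticFT{g}\equiv 1$; the Gauss-sum gain $\modulus^{-\dimension/\degree}$ appears with exponent $2-2/p$, which vanishes at $p=1$ (and by duality at $p=\infty$). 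So the per-arc $\ell^\infty$ (equivalently $\ell^1$) bound is $O(1)$ with no $\modulus$-decay, and the Farey sum $\sum_\modulus \phi(\modulus)\cdot O(1)$ diverges. Your proposed $\ell^\infty$ endpoint therefore cannot be obtained by summing per-arc $\ell^\infty$ bounds; it would need a genuinely different argument (for example, exhibiting cancellation between arcs, or showing directly that the kernel of $\HLop$ itself — not of each $\HLop^{\Fareyfraction}$ — lies in $\ell^1$ uniformly in $\radius$). Note that the paper never actually needs the lemma for $p>2$: in the proof of Theorem~\ref{thm:Vinogradov_max_thm} it only invokes the range $p\leq 2$ for $\maxHLop$ and then performs the final interpolation using the trivial $\ell^\infty$ bound for the original averaging operator $\maxop$ (which is a probability average, hence trivially bounded), not for $\maxHLop$. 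If you want to prove the lemma as literally stated for all $p>\tfrac{\dimension}{\dimension-\degree}$, you should either supply a real $\ell^\infty$ argument for $\maxHLop$, or observe that the $p\leq 2$ case is all that is used downstream and flag the $p>2$ claim as not needed.
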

%

\begin{proof}[Proof of Theorem~\ref{thm:Vinogradov_max_thm}]
Summing over dyadic $\dyadicradius = 2^j$ for $j \in \N$, \eqref{error_bound} of the approximation formula implies that 
\begin{equation}\label{error_bound_on_ell2}
\lpnorm{2}{\sup_{\radius \in \acceptableradii} \absolutevalueof{\avgop \fxn - \HLop \fxn}} 
\lesssim \lpnorm{2}{\fxn} 
. 
\end{equation}
Combining \eqref{error_bound_on_ell2} with \eqref{maxHLop_bound} of Lemma~\ref{maxHLop_lemma}, we prove Theorem~\ref{thm:Vinogradov_max_thm} for $p=2$ and $\dimension > \max{\inbraces{\degree(\degree+2), \degree/\powerloss}}$. 
By Theorem~\ref{thm:dyadic}, the dyadic maximal operator $\dyadicmaxop$ is bounded on $\ell^p(\lattice)$ when $\dimension > \max{\inbraces{2\degree, \degree/\powerloss }}$ and  $p > \max{\inbraces{\frac{\dimension}{\dimension-\degree}, \frac{\dimension}{\dimension -\degree/2\powerloss}}}$. 
By Lemma~\ref{lemma:maxHLop_arc_bound}, the dyadic maximal operator $\dyadicmaxHLop$ is bounded on $\ell^p(\lattice)$ when $\dimension > \frac{\degree}{2}+1$ and $p > \frac{\dimension}{\dimension-\degree}$. These last two observations imply that 
\begin{equation}\label{dyadic_error_bound_on_ellp}
\lpnorm{p}{\disup{\dyadicradius}{\radius}{\absolutevalueof{\avgop \fxn - \HLop \fxn}} } 
\lesssim \lpnorm{p}{\fxn} 
\end{equation}
for $\dimension > \max{\inbraces{2\degree, \degree/\powerloss }}$ and  $\max{\inbraces{\frac{\dimension}{\dimension-\degree}, \frac{\dimension}{\dimension -\degree/2\powerloss}} } < p \leq 2$. 
Interpolating \eqref{error_bound} with \eqref{dyadic_error_bound_on_ellp}, we find that there exists some $\epsilon(p) > 0$ such that  
\[
\lpnorm{p}{\disup{\dyadicradius}{\radius}{\absolutevalueof{\avgop \fxn - \HLop \fxn}} } 
\lesssim \dyadicradius^{-\epsilon(p)} \lpnorm{p}{\fxn} 
\]
for $\dimension > \max{\inbraces{2\degree, \degree/\powerloss }}$ and  $\max{\inbraces{\frac{\dimension}{\dimension-\degree}, \frac{\dimension}{\dimension -\degree/2\powerloss}} } < p \leq 2$. 
Summing over dyadic $\dyadicradius = 2^j$, we deduce
\begin{equation}\label{error_bound_on_ellp}
\lpnorm{p}{\sup_{\radius \in \acceptableradii} \absolutevalueof{\avgop \fxn - \HLop \fxn}} 
\lesssim \lpnorm{p}{\fxn} 
\end{equation}
for $\dimension > \max{\inbraces{2\degree, \degree/\powerloss }}$ and  $\max{\inbraces{\frac{\dimension}{\dimension-\degree}, \frac{\dimension}{\dimension -\degree/2\powerloss}} } < p \leq 2$. 
\eqref{maxHLop_bound} of Lemma~\ref{maxHLop_lemma} and \eqref{error_bound_on_ellp} imply 
\[
\lpnorm{p}{\maxop \fxn} 
\lesssim \lpnorm{p}{\fxn} 
\]
for $\max{\inbraces{\frac{\dimension}{\dimension-\degree}, \frac{\dimension}{\dimension -\degree/2\powerloss}} } < p \leq 2$. 
Interpolation with the trivial $\ell^{\infty}$ bound finishes the theorem.
\end{proof}
%
\begin{rem}
Since Conjectures~\ref{conjecture:higher_degree_strong_type} and \ref{conjecture:higher_degree_weak_type} are very difficult (requiring at least a full solution to Waring's problem), a slightly more reasonable problem is to prove $\maxop$ is bounded on $\ell^2(\lattice)$ for $\dimension >2\degree$. The above proposition shows that this is true for $\maxHLop$. So the difficulties lie in improving the bounds for the error term in the approximation formula, especially in the minor arcs bounds of \eqref{minor_arcs_max_fxn_bound}. 
\end{rem}

\section{Ergodic arithmetic $\degree$-spherical averages}\label{section:ergodic}
%
This section follows the method in \cite{Magyar_ergodic} to prove the pointwise ergodic theorems for $\degree$-spherical averages. 
Our set-up is a measure preserving system $\mps$, i.e. $\measurespace$ is a probability space, $\measure$ is its probability measure and $\measurepreservingtransform = (\measurepreservingtransform_1, \dots, \measurepreservingtransform_\dimension)$ is a family of commuting, invertible measure preserving transformations. This induces a $\Z^\dimension$-action on $\measurespace$ by 
$\measurespacepoint \mapsto \measurepreservingtransform^\latticepoint \measurespacepoint 
:= \measurepreservingtransform_1^{\latticepoint_1}\cdots \measurepreservingtransform_{\dimension}^{\latticepoint_\dimension} \measurespacepoint$ for all $\latticepoint \in \lattice$. 
This action naturally extends to functions on $\measurespace$; e.g. for a measurable function $\mfxn$ defined on $\measurespace$, let $\measurepreservingtransform^\latticepoint \mfxn(\measurespacepoint) = \mfxn(\measurepreservingtransform^\latticepoint \measurespacepoint)$. 
For a function $\mfxn: \measurespace \to \C$, we use this action to define the \emph{$\degree$-spherical averages of radius $\radius$} as 
\[
\arithmeticergodicsphericalaverage \mfxn(x) 
:= \inverse{N_{\dimension,\degree}(\radius)} \sum_{\latticepoint \in \arithmetichigherordersphere} \mfxn(\measurepreservingtransform^\latticepoint x) 
\]
and its \emph{$\degree$-spherical maximal function} as 
\[
\arithmeticergodicsphericalmaximal \mfxn 
:= \sup_{\radius \in \acceptableradii} \absolutevalueof{\arithmeticergodicsphericalaverage \mfxn} 
. 
\]
By the Calder\'on transference principle -- see \cite{Bourgain_maximal_ergodic} or \cite{Magyar_ergodic} -- the maximal theorems on $\lattice$ imply maximal theorems for any measure preserving system. In particular Theorem~\ref{thm:Vinogradov_max_thm} implies: 
\begin{mpsmaximaltheorem}
If $\arithmetichigherorderspheremaximal$ is bounded on $\ell^p(\lattice)$, then $\arithmeticergodicsphericalmaximal$ is bounded on $L^p(\measurespace, \measure)$; that is, for any function $\mfxn \in L^p(\measurespace,\measure)$,
\[
\Lpnorm{p}{\measurespace, \measure}{ \arithmeticergodicsphericalmaximal \mfxn}
\lesssim \Lpnorm{p}{\measurespace, \measure}{\mfxn} 
. 
\]
\end{mpsmaximaltheorem}

For our $\degree$-spherical ergodic theorems, we not only need to assume that the family of measure preserving transformations is ergodic, but is also strongly ergodic. 
Recall that a commuting family of measure preserving transformations is ergodic if $\measurepreservingtransform_1 \mfxn = \cdots = \measurepreservingtransform_\dimension \mfxn) = \mfxn$ implies that $\mfxn$ is constant, or equivalently, the only $\measurepreservingtransform$-invariant measurable sets have either measure 0 or 1. 
A family of measure preserving transformations is \emph{strongly ergodic} if for all $q \in \N$, the family $\measurepreservingtransform^q = (\measurepreservingtransform_1^q, \dots, \measurepreservingtransform_\dimension^q)$ 
is ergodic. 
This condition rules out certain atomic counterexamples -- see \cite{Magyar_ergodic} for more information -- and is necessary for our mean and pointwise ergodic theorems. In particular we will prove: 
\begin{Ltwoergodictheorem}\label{Ltwo_ergodic}
If hypothesis $\hypothesis{\powerloss}$ is true for some $0<\powerloss<1$
and $\measurepreservingtransform = (\measurepreservingtransform_1, \dots, \measurepreservingtransform_\dimension)$ is a commuting, strongly ergodic family of operators, then
\[
\Lpnorm{2}{\measurespace, \measure}{\lim_{\radius \to \infty} \arithmeticergodicsphericalaverage \mfxn - \int_\measurespace \mfxn d\measure} = 0 
\]
for $\dimension > \max{\{ \degree(\degree+2), \degree/\powerloss\}}$. 
\end{Ltwoergodictheorem}

A standard approximation argument using the maximal theorem for measure preserving systems and that $L^2$ is dense in $L^p$ allows us to immediately deduce convergence in $L^p$ for certain $p<2$.

\begin{cor}
If hypothesis $\hypothesis{\powerloss}$ is true for some $0<\powerloss<1$
and $\measurepreservingtransform = (\measurepreservingtransform_1, \dots, \measurepreservingtransform_\dimension)$ is a commuting, strongly ergodic family of operators, then we have convergence in $L^p(\measurespace, \measure)$ for $\dimension > \max{\{ \degree(\degree+2), \degree/\powerloss\}}$ and $p> \max{ \{ \frac{\dimension}{\dimension -\degree/2\powerloss}, \frac{\dimension}{\dimension-\degree} \}}$. 
\end{cor}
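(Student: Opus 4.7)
The plan is a standard density and maximal function argument that reduces $L^p$ convergence to the $L^2$ Ergodic Theorem just established. Since $\measure$ is a probability measure and the constraints $\dimension > \degree/\powerloss$ and $\dimension > \degree(\degree+2) > 2\degree$ force both critical exponents $\frac{\dimension}{\dimension-\degree}$ and $\frac{\dimension}{\dimension-\degree/2\powerloss}$ to lie strictly below $2$, the effective range of the corollary is $\max\inbraces{\frac{\dimension}{\dimension-\degree}, \frac{\dimension}{\dimension-\degree/2\powerloss}} < p \leq 2$. In this range, Jensen's inequality yields the continuous inclusion $L^2(\measurespace,\measure) \hookrightarrow L^p(\measurespace,\measure)$ with $\Lpnorm{p}{\measurespace,\measure}{\cdot} \leq \Lpnorm{2}{\measurespace,\measure}{\cdot}$, and $L^2 \cap L^p$ is dense in $L^p$.

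On this dense subspace the conclusion is immediate: for $g \in L^2(\measurespace,\measure)$, the $L^2$ Ergodic Theorem gives $\Lpnorm{2}{\measurespace,\measure}{\arithmeticergodicsphericalaverage g - \int_\measurespace g\,d\measure} \to 0$ as $\radius \to \infty$, which via the inclusion above forces $\Lpnorm{p}{\measurespace,\measure}{\arithmeticergodicsphericalaverage g - \int_\measurespace g\,d\measure} \to 0$.

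To pass from the dense subspace to all of $L^p$, I would invoke the Maximal Theorem for Measure Preserving Systems---a consequence of Theorem~\ref{thm:Vinogradov_max_thm} via the Calder\'on transference principle---which in the present range yields $\Lpnorm{p}{\measurespace,\measure}{\arithmeticergodicsphericalmaximal h} \lesssim \Lpnorm{p}{\measurespace,\measure}{h}$ for every $h \in L^p$. Given $f \in L^p$ and $\epsilon > 0$, pick $g \in L^2 \cap L^p$ with $\Lpnorm{p}{\measurespace,\measure}{f-g} < \epsilon$ and decompose
\[
\Lpnorm{p}{\measurespace,\measure}{\arithmeticergodicsphericalaverage f - \int_\measurespace f\,d\measure} \leq \Lpnorm{p}{\measurespace,\measure}{\arithmeticergodicsphericalaverage(f-g)} + \Lpnorm{p}{\measurespace,\measure}{\arithmeticergodicsphericalaverage g - \int_\measurespace g\,d\measure} + \absolutevalueof{\int_\measurespace (g-f)\,d\measure}.
\]
The first term is bounded by $\Lpnorm{p}{\measurespace,\measure}{\arithmeticergodicsphericalmaximal(f-g)} \lesssim \epsilon$ using the maximal inequality; the third is bounded by $\Lpnorm{1}{\measurespace,\measure}{f-g} \leq \Lpnorm{p}{\measurespace,\measure}{f-g} < \epsilon$ by Jensen's inequality on a probability space; the middle term tends to $0$ as $\radius \to \infty$ by the preceding paragraph. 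Sending first $\radius \to \infty$ and then $\epsilon \to 0$ yields the claimed $L^p$ convergence.

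There is essentially no substantive obstacle here; the entire analytic content lies in Theorem~\ref{thm:Vinogradov_max_thm} (the $\ell^p$ maximal bound) together with the $L^2$ Ergodic Theorem (whose proof in turn rests on the Approximation Formula and an oscillation inequality). The corollary is a routine packaging of these ingredients, modulo the observation that the stated range of $p$ is genuinely contained in $(1,2]$ so that both the density step and Jensen's inequality are available.
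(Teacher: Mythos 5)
Your proposal is correct and fills in exactly the standard density-plus-maximal-inequality argument that the paper compresses into a single sentence (``A standard approximation argument using the maximal theorem for measure preserving systems and that $L^2$ is dense in $L^p$''). The decomposition into three terms, the use of the transferred maximal bound for the first, Jensen on a probability space for the inclusion $L^2 \hookrightarrow L^p$ and for the third term, and the $L^2$ Ergodic Theorem for the middle term are all precisely the intended ingredients, and your verification that the critical exponents lie strictly below $2$ is a correct (and worth-making-explicit) check that the density step is available.
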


\begin{pointwiseergodictheorem}
If hypothesis $\hypothesis{\powerloss}$ is true for some $0<\powerloss<1$, $\measurepreservingtransform = (\measurepreservingtransform_1, \dots, \measurepreservingtransform_\dimension)$ is a family of strongly ergodic operators with $\dimension>\max{\{ \degree(\degree+2), \degree/\powerloss\}}$ and $\mfxn \in L^p(\measurespace,\measure)$ for some $p>\max{ \{ \frac{\dimension}{\dimension -\degree/2\powerloss}, \frac{\dimension}{\dimension-\degree} \}}$, then we have pointwise convergence a.e., that is
\[
\lim_{\radius \to \infty} \arithmeticergodicsphericalaverage \mfxn(x) = \int_\measurespace \mfxn d\measure
\]
for $\measure$-a.e. $x\in \measurespace$.
\end{pointwiseergodictheorem}

In particular, using the Weyl bound and Wooley's sup bound, the mean and pointwise ergodic theorems are true for
\begin{itemize}
\item $p>\frac{\dimension}{\dimension - \degree 2^{\degree-2}}$ and $\dimension>\degree 2^{\degree-1}$ if $2\leq \degree \leq 7$
\item $p>\frac{\dimension}{\dimension - \degree^2(\degree-2)}$ and $\dimension > 2\degree^2 (\degree-2)$ if $\degree \geq 8$.
\end{itemize}

\subsection{The Spectral Theorem for Unitary Operators}
Before turning to the proofs of our $\degree$-spherical $L^2$ and pointwise ergodic theorems, we recall a few facts from \cite{Magyar_ergodic} concerning the spectral theory of unitary operators. Let $\innerproductof{f}{g}:= \int_{\measurespace} f \conjugate{g} d\measure$ be the inner product of $f$ and $g$ in $L^2(\measurespace)$.
\begin{spectraltheorem}
For a commuting family $\measurepreservingtransform = (\measurepreservingtransform_1, \dots, \measurepreservingtransform_\dimension)$ of measure preserving transformations and a measurable $\mfxn : \measurespace \to \C$, there exists a Borel measure $\spectralmeasure_{\mfxn}$ on $\torus$ such that for any polynomial $P \in \Z[x_1, \dots, x_\dimension]$, 
\[
\innerproductof{P(\measurepreservingtransform_1, \dots, \measurepreservingtransform_\dimension)\mfxn}{\mfxn} = \int_{\torus} P(e(\spectralpoint)) \, d\spectralmeasure_{\mfxn}(\spectralpoint) .
\]

\end{spectraltheorem}
The spectral measure satisfies the following two properties.
\begin{enumerate}
\item Let $\spectralpoint \in \torus$. $\spectralmeasure_{\mfxn}(\spectralpoint) >0$ if and only if there exists $g \in L^2$ such that $\measurepreservingtransform_j g = e(\spectralpoint_j) g$ for all $j$. We say that $\spectralpoint$ is a joint eigenvalue of $\measurepreservingtransform$, and $g$ is a joint eigenfunction of $\measurepreservingtransform$. 
\item If $\measurepreservingtransform$ is ergodic, then $\spectralmeasure_{\mfxn}(0) = \absolutevalueof{\int_{\measurespace} \mfxn d\measure}^2$. 
\end{enumerate}

Using (1), we have the following characterization of strongly ergodic in terms of the spectrum -- see \cite{Magyar_ergodic} for more details. 
\begin{fact}
A commuting family $\measurepreservingtransform$ of tranformations is strongly ergodic if and only if $\spectralmeasure_{\mfxn}(\Q^\dimension \setminus \{0\}) \equiv 0$ for any measurable function $\mfxn$. 
\end{fact}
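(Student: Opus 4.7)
The plan is to translate both sides of the biconditional into atomic properties of the spectral measures $\spectralmeasure_g$, then match them via the countable union $\bigcup_{q \in \N} E_q = \Q^\dimension/\Z^\dimension$, where $E_q := (q^{-1}\Z/\Z)^\dimension \subset \torus$. The key reformulation is that $g \in L^2(\measurespace, \measure)$ is $\measurepreservingtransform^q$-invariant iff $\spectralmeasure_g$ is supported in $E_q$, which follows from the spectral theorem via Parseval:
\[
\Lpnorm{2}{\measurespace,\measure}{\measurepreservingtransform_j^q g - g}^2 = \int_{\torus} \absolutevalueof{1 - \eof{q\spectralpoint_j}}^2\, d\spectralmeasure_g(\spectralpoint),
\]
whose simultaneous vanishing over all $j$ forces $\spectralmeasure_g$ onto $E_q$.

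For the forward direction, assume $\measurepreservingtransform$ is strongly ergodic and fix $\spectralpoint \in \Q^\dimension/\Z^\dimension \setminus \inbraces{0}$. Pick $q \in \N$ with $\spectralpoint \in E_q$. If some $\spectralmeasure_\mfxn$ had an atom at $\spectralpoint$, property (1) would produce a non-zero joint eigenfunction $g$ satisfying $\measurepreservingtransform_j g = \eof{\spectralpoint_j} g$ for every $j$; this $g$ is not a.e.\ constant (since $\spectralpoint \ne 0$ in $\torus$ forces $\eof{\spectralpoint_j} \ne 1$ for some $j$), yet $\measurepreservingtransform_j^q g = \eof{q\spectralpoint_j} g = g$ for every $j$, contradicting ergodicity of $\measurepreservingtransform^q$. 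Hence no $\spectralmeasure_\mfxn$ carries atoms in $\Q^\dimension/\Z^\dimension \setminus \inbraces{0}$, and countability of this set upgrades the conclusion to $\spectralmeasure_\mfxn(\Q^\dimension/\Z^\dimension \setminus \inbraces{0}) = 0$. For the converse, fix $q \in \N$ and let $g$ be $\measurepreservingtransform^q$-invariant, so $\spectralmeasure_g$ is supported in $E_q$; the hypothesis then gives $\spectralmeasure_g(E_q \setminus \inbraces{0}) = 0$, so $\spectralmeasure_g$ is concentrated at $\inbraces{0}$, and in particular $g$ is $\measurepreservingtransform$-invariant. The $q=1$ case of strong ergodicity (that is, $\measurepreservingtransform$ is ergodic) combined with property (2) then gives $\Lpnorm{2}{\measurespace,\measure}{g}^2 = \spectralmeasure_g(\inbraces{0}) = \absolutevalueof{\int g\,d\measure}^2$, which is the equality case of Jensen's inequality and forces $g$ to be a.e.\ constant.

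The main subtlety, and the principal obstacle, is this last step: promoting ``$\spectralmeasure_g$ concentrated at $\inbraces{0}$'' to ``$g$ is a.e.\ constant'' requires $\measurepreservingtransform$-ergodicity via property (2), and is not extractable from the atomic condition alone; one imports it from the $q = 1$ case of strong ergodicity. Once this is in place, the biconditional reduces to the clean observation that $\Q^\dimension/\Z^\dimension = \bigcup_q E_q$ is a countable set, so absence of atoms on it is exactly ergodicity of every $\measurepreservingtransform^q$.
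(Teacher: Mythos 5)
Your argument is essentially sound, and since the paper does not actually supply a proof (it cites \cite{Magyar_ergodic} and moves on), there is no internal proof to compare against; what follows is purely an assessment of your reasoning.

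The forward direction is clean: an atom of some $\spectralmeasure_\mfxn$ at a rational $\spectralpoint \neq 0$ produces, via property~(1), a nonzero joint eigenfunction $g$ with $\measurepreservingtransform_j g = \eof{\spectralpoint_j} g$; choosing $q$ with $q\spectralpoint \in \Z^\dimension$ makes $g$ a $\measurepreservingtransform^q$-invariant function that cannot be a.e.\ constant (a constant eigenfunction with eigenvalue $\eof{\spectralpoint_j}\neq 1$ must vanish), contradicting ergodicity of $\measurepreservingtransform^q$. Countability of $\Q^\dimension/\Z^\dimension$ then upgrades atom-freeness to vanishing of the measure. The Parseval identity you use to characterize $\measurepreservingtransform^q$-invariance via the support of $\spectralmeasure_g$ is correct.

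There is, however, a genuine circularity in how you phrase the converse, and you should resolve it rather than flag it. You write that promoting ``$\spectralmeasure_g$ concentrated at $\{0\}$'' to ``$g$ a.e.\ constant'' is something ``one imports from the $q=1$ case of strong ergodicity.'' But in the converse direction, strong ergodicity is the \emph{conclusion}, not a hypothesis; you cannot import its $q=1$ case. Indeed, the biconditional is false without an extra assumption: take $\measurepreservingtransform$ to be the identity on a non-trivial probability space. Then every $g\in L^2$ is $\measurepreservingtransform$-invariant, so every $\spectralmeasure_g$ is concentrated at $\{0\}$ and the right-hand condition holds, yet $\measurepreservingtransform$ is not even ergodic. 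What makes the fact correct is the paper's ambient standing hypothesis, made explicit a few lines earlier, that $\measurepreservingtransform$ is already assumed ergodic; the fact is characterizing the \emph{additional} content of strong ergodicity. With that as a standing hypothesis, your converse argument is fine: a $\measurepreservingtransform^q$-invariant $g$ has $\spectralmeasure_g$ supported in $E_q$, the hypothesis kills $E_q\setminus\{0\}$, so $g$ is $\measurepreservingtransform$-invariant, and ergodicity of $\measurepreservingtransform$ (now an assumption, not an output) together with property~(2) forces $g$ to be a.e.\ constant. Rewriting the last paragraph to say ``one uses the ambient ergodicity of $\measurepreservingtransform$'' in place of ``one imports it from the $q=1$ case of strong ergodicity'' would make the logic watertight.
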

%

%
\subsection{Proof of the $L^2$ Ergodic Theorem}
Without loss of generality, assume that $\int_{\measurespace} \mfxn \, d\measure=0$ so that $\spectralmeasure_{\mfxn}(0)=0$. This and the strong ergodicity condition imply that $\spectralmeasure_{\mfxn}(\Q^\dimension) \equiv 0$. Using the spectral theorem, we relate this to $\higherorderspheremultiplier$:
\begin{align*}
\Lpnorm{2}{\measurespace}{\arithmeticergodicsphericalaverage \mfxn - \int_{\measurespace} \mfxn \, d\measure}^2
& = \Lpnorm{2}{\measurespace}{\arithmeticergodicsphericalaverage \mfxn}^2 \\
& = \int_{\torus} |\higherorderspheremultiplier(\freqpoint)|^2 \, d\spectralmeasure_{\mfxn}(\freqpoint) 
. 
\end{align*}
The $L^2$ ergodic theorem will follow from Lebesgue's dominated convergence theorem and the strong ergodicity condition once we prove the following lemma. 
\begin{lemma} For each irrational $\toruspoint$ e.g. $\toruspoint \not\in \Q^\dimension \cap \unitcube$,
\[
\lim_{\radius \to \infty} \higherorderspheremultiplier(\toruspoint) = 0 
. 
\]
\end{lemma}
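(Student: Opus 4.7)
The plan is to apply The Approximation Formula of Section~\ref{section:general_thm} at dyadic level $\dyadicradius \uptoconstants \radius$ and show that both the explicit main term and the error $\arithmeticFT{\error}$, once divided by $\numberoflatticepoints(\radius) \uptoconstants \radius^{\dimension-\degree}$, tend to zero pointwise at each irrational $\toruspoint$. Using the scaling $\contFT{d\surfacemeasure_{\radius}}(\eta) = \radius^{\dimension-\degree}\,\contFT{d\surfacemeasure_{1}}(\radius\eta)$ together with $\numberoflatticepoints(\radius) \uptoconstants \radius^{\dimension-\degree}$, the normalized main term reads, up to a fixed constant,
\[
\sum_{\modulus\geq 1}\sum_{\unit\in\unitsmod{\modulus}} \eof{\unit\radius^{\degree}/\modulus}\,\twistedGausssum{\latticepoint_{\modulus}}\, \Psi(\modulus\toruspoint-\latticepoint_{\modulus})\,\contFT{d\surfacemeasure_{1}}\bigl(\radius(\modulus\toruspoint-\latticepoint_{\modulus})\bigr),
\]
where $\latticepoint_{\modulus}\in\lattice$ is the unique integer vector (if any) in $\modulus\toruspoint+[-1/4,1/4]^{\dimension}$ singled out by the support of $\Psi$.

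To handle the main term I would split the $\modulus$-sum at a cutoff $\bigmodulus$. For the tail $\modulus>\bigmodulus$, Hua's bound $|\twistedGausssum{\latticepoint_{\modulus}}|\lessapprox \modulus^{-\dimension/\degree}$ together with the trivial bound $|\contFT{d\surfacemeasure_{1}}|\leq \higherorderspheremeasure_{1}(\unithigherordersphere)$ controls the tail by $\lessapprox \sum_{\modulus>\bigmodulus}\modulus^{1-\dimension/\degree}$, which is summable (since $\dimension>\degree(\degree+2)>2\degree$) and hence smaller than any prescribed $\varepsilon$ for $\bigmodulus$ large. For the head $\modulus\leq\bigmodulus$ the sum is finite, and irrationality of $\toruspoint$ forces $\modulus\toruspoint-\latticepoint_{\modulus}\neq 0$ for each such $\modulus$; the Bruna--Nagel--Wainger decay \eqref{kspherical_decay} then yields $|\contFT{d\surfacemeasure_{1}}(\radius(\modulus\toruspoint-\latticepoint_{\modulus}))|\lessapprox(\radius|\modulus\toruspoint-\latticepoint_{\modulus}|)^{-(\dimension-1)/\degree}\to 0$ as $\radius\to\infty$, so the head is also less than $\varepsilon$ for $\radius$ large.

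For the error term I need the pointwise estimate $\arithmeticFT{\error}(\toruspoint)=o(\radius^{\dimension-\degree})$, whereas the stated Approximation Formula only provides $\ell^{2}$ control (Plancherel gives $\|\arithmeticFT{\error}\|_{L^{2}(\torus)} \lesssim \radius^{\dimension-\degree-\powersavings}$). I plan to upgrade this by revisiting the proof in Section~\ref{section:approximation_formula}: each of the four constituents of $\error$ --- the two major-arc approximations $\avgop^{Major}-B^{Major}$ and $B^{Major}-\HLop^{Major}$, the singular-series tail $\sum_{\modulus>\WaringMajorarclevel}\sum_{\unit}\HLop^{\Fareyfraction}$, and the minor-arc contribution $\avgop^{minor}$ --- is derived from a multiplier bound that is already pointwise in $\toruspoint$, the main $\ell^{2}$ inequality being invoked only after taking a supremum in $\toruspoint$. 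Assembling these pointwise bounds via Hypothesis~$\hypothesis{\powerloss}$ on the minor arcs, Hua's bound together with Lemma~\ref{lemma:maxHLop_arc_bound} on the singular-series tail, and the proofs of Lemmas~\ref{lemma:first_dyadic_aprroximation_bound} and~\ref{lemma:HLop_dyadic_aprroximation_bound} on the major arcs yields $|\arithmeticFT{\error}(\toruspoint)|\lessapprox \radius^{\dimension-\degree-\powersavings}$ uniformly in $\toruspoint$, provided $\dimension>\max\{\degree(\degree+2),\degree/\powerloss\}$.

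The only genuine obstacle is this last step --- promoting the $L^{2}$-bound in The Approximation Formula to a pointwise bound on $\arithmeticFT{\error}(\toruspoint)$; everything else follows from the Gelfand--Leray rescaling, the Bruna--Nagel--Wainger decay, and a standard $\varepsilon$-argument. Once it is noted that the multiplier estimates hidden inside Section~\ref{section:approximation_formula} are intrinsically pointwise (the $\ell^{2}$ phrasing being forced by the later need to take the dyadic supremum for the maximal theorem), the lemma follows by combining the main-term analysis with the error estimate.
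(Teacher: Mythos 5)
Your argument matches the paper's almost verbatim: apply the Approximation Formula, control the tail $\modulus>\bigmodulus$ by Hua's bound for Gauss sums, control the finitely many heads by the Bruna--Nagel--Wainger decay \eqref{kspherical_decay} together with the irrationality of $\toruspoint$, and bound the error. The one obstacle you flag, however, is illusory: for each fixed $\radius$ the operator $\error$ is a \emph{convolution} operator, so its $\ell^{2}(\lattice)\to\ell^{2}(\lattice)$ operator norm (which \eqref{error_bound} controls after dropping the supremum) equals $\|\arithmeticFT{\error}\|_{L^{\infty}(\torus)}$ by Plancherel --- not merely the $L^{2}(\torus)$ norm as you suppose --- so the uniform pointwise bound $|\arithmeticFT{\error}(\toruspoint)|\lesssim\radius^{\dimension-\degree-\powersavings}$ is immediate and there is no need to revisit the proofs in Section~\ref{section:approximation_formula}.
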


\begin{proof}
By The Approximation Formula, 
$\higherorderspheremultiplier(\toruspoint) 
= \arithmeticFT{\HLop}(\toruspoint) + \radius^{\degree-\dimension} \arithmeticFT{\multerror}(\toruspoint) 
= \radius^{\degree-\dimension} \inparentheses{\sum_{\modulus=1}^{\infty} \HLmultiplier_{\radius}^{\unit/\modulus}(\toruspoint) + \arithmeticFT{\multerror}(\toruspoint)}$. 
The dyadic bound \eqref{error_bound} for $\multerror$ implies $\arithmeticFT{\multerror}(\toruspoint) \to 0$ uniformly for $\xi \in \torus$ as $\radius \to \infty$. 
We are left to consider the main term $\radius^{\degree-\dimension} \sum_{\modulus=1}^{\infty} \HLmultiplier_{\radius}^{\unit/\modulus}(\toruspoint)$. 
Let $0<\epsilon \ll 1$, then Hua's bound for Gauss sums implies that there exists a $\bigmodulus(\epsilon) \in \N$ such that $\sum_{\modulus \geq \bigmodulus} \sum_{\unit \in \unitsmod{\modulus}} \HLmultiplier_{\radius}^{\unit/\modulus}(\toruspoint) \lesssim \epsilon$. So we are reduced to studying finitely many multipliers $\HLmultiplier_{\radius}^{\unit/\modulus}$ for $\modulus < \bigmodulus(\epsilon)$ and $\unit \in \unitsmod{\modulus}$. For a fixed irrational $\xi \in \torus$, choose $\delta(\bigmodulus)$ positive and sufficiently small so that $|\toruspoint-\latticepoint/\modulus| > \delta$ for all $\modulus \leq \bigmodulus$ and $\latticepoint \in \lattice$. 
The estimate \eqref{kspherical_decay} of the Bruna--Nagel--Wainger theorem implies that for $\radius > \radius(\delta)$ sufficiently large depending on $\delta$, we have $\contFT{d\higherorderspheremeasure_{\radius}}(\freqpoint) \lesssim \left( \frac{\delta}{\radius} \right)^{\frac{\dimension-1}{\degree}}$. Taking $\radius$ sufficiently large we make $\contFT{d\higherorderspheremeasure_{\radius}}(\freqpoint)$ arbitrarily small. The Magyar--Stein--Wainger transference lemma implies that $\HLmultiplier_{\radius}^{\unit/\modulus}(\toruspoint) \to 0$ as $\radius \to \infty$ for each irrational $\toruspoint$.
\end{proof}

%
\subsection{Proof of the Pointwise Ergodic Theorem}
Our proof of the pointwise ergodic theorem for arithmetic $\degree$-spherical averages is more intricate than our proof of the $L^2$ mean ergodic theorem because there is no natural basis for which we can prove pointwise convergence. This is the same obstacle that Bourgain over came in his study of square averages in \cite{Bourgain_pointwise_ergodic} and \cite{Bourgain_arithmetic_sets}; in these papers Bourgain introduced proved an oscillation inequality to deduce pointwise convergence. This method was exploited in \cite{Magyar_ergodic}; our proof relies on proving the same oscillation inequality. 

\subsubsection{Reduction to the oscillation inequality}
We commence with a few reductions. Without loss of generality, we assume that our function has mean 0; that is, $\int_{\measurespace} \mfxn d\measure =0$. By the maximal theorem for measure preserving systems, we reduce to proving the theorem on a dense subset of $L^p$; we choose $L^\infty$. We want to show for any bounded function $\mfxn$ of mean 0 in $L^2(\measurespace)$ that 
\[
\measure \left(\{ \measurespacepoint \in \measurespace: \limsup_{\radius \to \infty} | \arithmeticergodicsphericalaverage \mfxn(\measurespacepoint)| > 0 \} \right)
= 0 .
\]
Assume not for contradiction; thus, we can assume that for positive $\alpha$,
\begin{equation}\label{positive_oscillation}
\measure \left(\{ \measurespacepoint \in \measurespace: \limsup_{\radius \to \infty} | \arithmeticergodicsphericalaverage \mfxn(\measurespacepoint)| > 2\alpha \} \right)
> 2\alpha .
\end{equation}
\eqref{positive_oscillation} implies that for any radius $\radius_1 \in \acceptableradii$, we can find another radius $\radius_2 \in \acceptableradii$ such that
\[
\measure \left(\{ \measurespacepoint \in \measurespace: \sup_{\radius_1 \leq \radius < \radius_2} | \arithmeticergodicsphericalaverage \mfxn(\measurespacepoint)| > \alpha \} \right)
> \alpha .
\]
We use this process to build a sequence of radii $\radius_j \in \acceptableradii$ for $j\in \N$ and truncated maximal functions 
\[
\truncatedarithmeticergodicsphericalmaximal{j} \mfxn := \sup_{\radius_j \leq \radius < \radius_{j+1} } \arithmeticergodicsphericalaverage \absolutevalueof{\mfxn} 
.
\]
We will show:
\begin{oscillation_inequality_mps}
For all $\epsilon>0$, there is a $J = J(\epsilon)$ sufficiently large such that for all $j>J$, 
\begin{equation}\label{oscillation_inequality_for_mps}
\Lpnorm{2}{\measurespace}{\truncatedarithmeticergodicsphericalmaximal{j} \mfxn } 
\lesssim \epsilon \Lpnorm{2}{\measurespace}{\mfxn} .
\end{equation}
\end{oscillation_inequality_mps}
Choosing $\epsilon$ small enough in the oscillation inequality for measure preserving systems, we contradict \eqref{positive_oscillation} by Chebychev's inequality for $L^2(\measurespace)$. Therefore the pointwise ergodic theorem for $\degree$-spherical maximal functions is proved if the oscillation inequality for measure preserving systems is proved. 

We prove \eqref{oscillation_inequality_for_mps} by an appeal to the Calder\'on transference principle; this leads to an oscillation inequality on the product space $\lattice \times \measurespace$. 
We lift $\mfxn$ to the \emph{transfer function} $\transferfxn(\measurespacepoint, \latticepoint) = \mfxn(\measurepreservingtransform^{\latticepoint} \measurespacepoint)$ for all $\measurespacepoint \in \measurespace$ and $\latticepoint \in \lattice$. We truncate $\transferfxn$ to lattice points $\latticepoint$ in a large cube $\discretecube{N} := \inbraces{n \in \lattice : \absolutevalueof{n_i} \leq N \text{ for } i=1, \dots, \dimension}$ with $N \gg \radius_j$; that is, redefine $\transferfxn$ to be $\transferfxn \cdot \indicator{\discretecube{N}}$. Define the \emph{lifted $\degree$-spherical averages}
\[
\arithmetichigherordersphereaverage \transferfxn(\measurespacepoint, n)
:= \inverse{\numberoflatticepoints(\radius)} \sum_{\latticepoint \in \arithmetichigherordersphere} \measurepreservingtransform^\latticepoint \mfxn(\measurepreservingtransform^n \measurespacepoint)
\]
for $n \in \lattice$, and the \emph{lifted $\degree$-spherical maximal function} $\truncatedarithmetichigherorderspheremaximal{j} \transferfxn(\measurespacepoint, \cdot)$ similarly to $\truncatedarithmeticergodicsphericalmaximal{j} \mfxn(\measurespacepoint)$. 
Give $\measurespace \times \lattice$ the product measure of $\measure$ on $\measurespace$ and the counting measure on $\lattice$. We transfer the oscillation inequality \eqref{oscillation_inequality_for_mps} to $\measurespace \times \lattice$. 
\begin{oscillation_inequality_lattice}
For all $\epsilon>0$, there exists a $J(\epsilon)$ such that for $j>J$, we have
\begin{equation}\label{oscillation_inequality_for_lattice}
\Lpnorm{2}{\measurespace \times \lattice}{\truncatedarithmetichigherorderspheremaximal{j} \transferfxn } 
\lesssim \epsilon \Lpnorm{2}{\measurespace \times \lattice}{\transferfxn} .
\end{equation}
\end{oscillation_inequality_lattice}
\begin{proof}[Proof of the Oscillation Inequality for Measure Preserving Systems]
Once the transfer oscillation inequality \eqref{oscillation_inequality_for_lattice} is proved, we have
\[
\Lpnorm{2}{\measurespace \times \lattice}{\transferfxn} 
= \sizeof{\discretecube{N}} \Lpnorm{2}{\measurespace}{\mfxn}
\]
while
\[
\sizeof{\discretecube{N-\radius_j}} \Lpnorm{2}{\measurespace}{\truncatedarithmeticergodicsphericalmaximal{j} \mfxn }
\leq \Lpnorm{2}{\measurespace \times \lattice}{\truncatedarithmetichigherorderspheremaximal{j} \transferfxn }
.
\]
Therefore, 
\[
\frac{\sizeof{\discretecube{N-\radius_j}}}{\sizeof{\discretecube{N}}} \Lpnorm{2}{\measurespace}{\truncatedarithmeticergodicsphericalmaximal{j} \mfxn }
\lesssim \epsilon \Lpnorm{2}{\measurespace}{\mfxn}
.
\]
Taking $N \to \infty$, we prove \eqref{oscillation_inequality_for_mps}.
\end{proof}

Our final step is to prove the oscillation inequality \eqref{oscillation_inequality_for_lattice}.
%
\subsubsection{Proof of the Transfer Oscillation Inequality}
%
First we extend The Approximation Formula to the lifted averages. Define the partial Fourier transform in the variable $\latticepoint \in \lattice$ by $\arithmeticFT{\transferfxn}(\measurespacepoint,\toruspoint) := \sum_{\latticepoint \in \lattice} \transferfxn(\measurespacepoint,\latticepoint) \eof{\latticepoint \cdot \freqpoint}$ for $\freqpoint \in \torus$. This allows us to extend $\eqref{HL_multiplier_defn}$ to $\measurespace \times \lattice$. We define the convolution operators $\transferHLoponarc$ by the multipliers 
\[
\arithmeticFT{\transferHLoponarc \transferfxn}(\measurespacepoint, \freqpoint) 
:= \HLmultiplier^{\Fareyfraction}_{\radius}(\freqpoint) \arithmeticFT{\transferfxn}(\measurespacepoint, \freqpoint) 
. 
\] 
Similary we extend the definition of the convolution operators $\HLop$, $B_{\radius}^{\Fareyfraction}$, $B_{\radius}$, $\error$, $M^{\Fareyfraction}_{\radius}$ and $N^{\Fareyfraction}_{\radius}$. Finally we define their truncated maximal functions similar to 
$\truncatedarithmetichigherorderspheremaximal{j} \transferfxn$. 
We repeat some of the previous analysis. 
Hua's bound for Gauss sums extends the bound \eqref{max_HL_op_bound}: for all $\modulus \in \N$ and $\unit \in \unitsmod{\modulus}$, we have the bound 
\[
\Lpnorm{2}{\measurespace \times \lattice}{\transfermaxHLoponarc \transferfxn} 
\lessapprox \modulus^{-\dimension/\degree} \Lpnorm{2}{\measurespace \times \lattice}{\transferfxn} 
. 
\] 
If
$\epsilon > 0$, then there exists $\bigmodulus(\epsilon)$ sufficiently large such that 
\[
\Lpnorm{2}{\measurespace \times \lattice}{\sum_{\modulus \geq \bigmodulus(\epsilon)} \sum_{\unit \in \unitsmod{\modulus}} \transfermaxHLoponarc \transferfxn} 
\lesssim \epsilon \Lpnorm{2}{\measurespace \times \lattice}{\transferfxn} 
. 
\]
Thus we are reduced to proving for all $\modulus < \bigmodulus(\epsilon)$ and $\unit \in \unitsmod{\modulus}$, 
\[
\Lpnorm{2}{\measurespace \times \lattice}{\transfermaxHLoponarc \transferfxn} 
\lessapprox \epsilon \modulus^{-\dimension/\degree} \Lpnorm{2}{\measurespace \times \lattice}{\transferfxn}
\]
with implicit constants independent of $\unit$ and $\modulus$. 
Recall that we decomposed $\HLmultiplier_{\radius}^{\unit/\modulus} = m_{\radius}^{\Fareyfraction} \cdot n_{\radius}^{\Fareyfraction}$. 
For the remainder of the argument we suppress the dependence of $\Fareyfraction$ in $n_{\radius}^{\Fareyfraction}$ and simply write $n_{\radius}^{\Fareyfraction} = n_{\radius}$. 
Similar to the analysis in the proof of Theorem~\ref{thm:Vinogradov_max_thm}, we reduce to studying the convolution operator $\contoperator_{\radius}$ with multiplier 
\[
\contmultiplier_{\radius}(\freqpoint)
 = \sum_{\latticepoint \in \lattice} \Psi_1(\modulus \freqpoint - \latticepoint) \contFT{d\surfacemeasure_{\radius}}({\freqpoint-\latticepoint/\modulus})
\] 
where $\Psi_1$ is a smooth function supported in $[-1/2,1/2]^{\dimension}$ such that $\Psi_1 \Psi = \Psi$, and its truncated maximal functions $N^{\Fareyfraction, *}_j$. 
$m_{\radius}^{\Fareyfraction}$ is handled by Magyar--Stein--Wainger's $1/\modulus$-periodic inequality (Lemma~\ref{lemma:MSW_periodic_inequality}) and Hua's bound for Gauss sums and as before allows us to sum up the following bounds for $n_{\radius}^{\Fareyfraction}$. 
Note that $n_{\radius}^{\Fareyfraction}$ does not depend on $\unit \in \unitsmod{\modulus}$, and that for each $\xi \in \torus$ and $\modulus < \bigmodulus$, there is a unique $\latticepoint \in \lattice$ such that $\Psi_1(\modulus \xi - \latticepoint) \not= 0$. 
We work out the case where $\latticepoint/\modulus = 0$ using the mean 0 condition. The cases where $\latticepoint/\modulus \not=0$ are similar, but use the strong ergodicity condition in place of the mean zero condition.

For the sequence $\radius_j$, let $\bumpfxn_j$ be a bump function supported in $\inbraces{\norm{\toruspoint}_{\infty} \leq \radius_j^{-1}}$, then
\[
\contmultiplier_{\radius} 
= \bumpfxn_j \contmultiplier_{\radius} + (1-\bumpfxn_j) \contmultiplier_{\radius}
=: \contmultiplier_{\radius}^{j,low} + \contmultiplier_{\radius}^{j,high}
\]
is a decomposition of $\contmultiplier_{\radius}$ into \emph{low} and \emph{high} frequencies respectively. $\contmultiplier_{\radius}^{j,high}$ is supported in the annulus $\inbraces{\radius_j^{-1} < \norm{\toruspoint}_{\infty} \leq 1/2\modulus}$ while $\contmultiplier_{\radius}^{j,low}$ is supported in the ball $\inbraces{\norm{\toruspoint}_{\infty} \leq \radius_j^{-1}}$. Notice that our cut-off $\radius_{j}^{-1}$ does not depend on $\modulus \leq \bigmodulus$ and since $\bigmodulus$ only depends on $\epsilon$, we can assume that $\radius_j$ is large enough so that $\inbraces{\radius_j^{-1} < \norm{\toruspoint}_{\infty} \leq 1/2\modulus}$ is non-empty. 
Our analysis now splits into proving bounds for the low and high frequency parts. The next proposition handles the high frequency part. 
\begin{prop}
If $\epsilon>0$, then there exists a $J^{high}(\epsilon)$ sufficiently large such that for all $\modulus \in \N$, $\unit \in \unitsmod{\modulus}$ and $j > J$, 
\begin{equation}\label{high_frequency_oscillation}
\Lpnorm{2}{\measurespace \times \lattice}{\contoperator^{*,high}_{j}\transferfxn}
\lesssim \epsilon \Lpnorm{2}{\measurespace \times \lattice}{\transferfxn} 
\end{equation}
with implicit constants independent of $\unit$ and $\modulus$. 
\end{prop}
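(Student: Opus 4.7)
The plan is to transfer the estimate to the continuous setting on $\euclideanspace$ via the Magyar--Stein--Wainger lemma, and then prove the resulting continuous bound by a square-function / derivative argument combined with the Bruna--Nagel--Wainger Fourier decay \eqref{kspherical_decay}.

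First, I would apply the Magyar--Stein--Wainger transference lemma with $B_1=\C$ and $B_2=\ell^\infty(\inbraces{\radius\in\acceptableradii : \radius_j\leq\radius<\radius_{j+1}})$, treating the family $(\contoperator_\radius^{j,high}\transferfxn)_\radius$ as a single vector-valued convolution operator. Its underlying multiplier $(1-\bumpfxn_j(\xi))\sum_{\latticepoint\in\lattice}\Psi_1(\modulus\xi-\latticepoint)\contFT{d\surfacemeasure_\radius}(\xi-\latticepoint/\modulus)$ is the $1/\modulus$-periodic extension of the compactly supported multiplier $m_\radius^{j,high}(\xi):=(1-\bumpfxn_j(\xi))\Psi_1(\modulus\xi)\contFT{d\surfacemeasure_\radius}(\xi)$ on $\euclideanspace$. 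Transference reduces \eqref{high_frequency_oscillation} to the continuous analog
\begin{equation*}
\Lpnorm{2}{\euclideanspace}{\sup_{\radius_j\leq\radius<\radius_{j+1}}\absolutevalueof{\mathcal{N}_\radius^{j,high}\fxn}}\lesssim\epsilon\,\Lpnorm{2}{\euclideanspace}{\fxn},
\end{equation*}
where $\mathcal{N}_\radius^{j,high}$ is the $\euclideanspace$-convolution operator with multiplier $m_\radius^{j,high}$.

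Next, I would apply Stein's elementary identity
\begin{equation*}
\sup_{\radius_j\leq\radius<\radius_{j+1}}\absolutevalueof{g(\radius)}^2\leq\absolutevalueof{g(\radius_j)}^2+2\int_{\radius_j}^{\radius_{j+1}}\absolutevalueof{g(\radius)\,\partial_\radius g(\radius)}\,d\radius
\end{equation*}
pointwise to $g(\radius)=\mathcal{N}_\radius^{j,high}\fxn(x)$, integrate over $x\in\euclideanspace$, apply Cauchy--Schwarz in $d\radius/\radius$ to the cross term, and invoke Plancherel. This reduces the continuous estimate to the two pointwise-in-$\xi$ quantities
\begin{equation*}
\sup_\xi\absolutevalueof{m_{\radius_j}^{j,high}(\xi)}\longrightarrow 0 \qquad \text{and} \qquad \sup_\xi\int_{\radius_j}^{\radius_{j+1}}\absolutevalueof{\radius\,\partial_\radius m_\radius^{j,high}(\xi)}^2\,\frac{d\radius}{\radius}\longrightarrow 0
\end{equation*}
as $j\to\infty$. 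Using the scaling identity $\contFT{d\surfacemeasure_\radius}(\xi)=\radius^{\dimension-\degree}\contFT{d\surfacemeasure_1}(\radius\xi)$ and absorbing the $\radius^{\dimension-\degree}$ factor into the normalization of $\HLop^{\Fareyfraction}$, the relevant normalized multiplier satisfies $\absolutevalueof{m_\radius^{j,high}(\xi)}\lesssim(1+\radius\absolutevalueof{\xi})^{-(\dimension-1)/\degree}$ by \eqref{kspherical_decay}, with analogous decay for derivatives. On the support of $1-\bumpfxn_j$, $\absolutevalueof{\xi}>\radius_j^{-1}$, hence $\radius\absolutevalueof{\xi}\geq 1$ throughout $[\radius_j,\radius_{j+1})$.

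To extract genuine smallness in $j$, I would split the support into an outer region $\inbraces{\absolutevalueof{\xi}>\radius_j^{-1+\beta}}$ for a fixed $\beta\in(0,1)$, and an inner annulus $\inbraces{\radius_j^{-1}<\absolutevalueof{\xi}\leq\radius_j^{-1+\beta}}$. On the outer region the Fourier decay immediately supplies a uniform $L^\infty$ factor $\radius_j^{-\beta(\dimension-1)/\degree}\to 0$ (and similarly for the derivative integral after the change of variables $u=\radius\absolutevalueof{\xi}$). On the inner annulus, whose Lebesgue measure is $O(\radius_j^{-\dimension(1-\beta)})$, the Fourier decay saturates at order $1$, so smallness must be extracted from the support measure: one interpolates the trivial $L^\infty$ bound for $m_\radius^{j,high}$ against the $L^p$--boundedness of the continuous $\degree$-spherical maximal operator supplied by Proposition~\ref{prop:cont_max_fxn_bound_for_higher_order_spheres} for some $p<2$, converting the vanishing Fourier support into a negative power of $\radius_j$. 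The exponent $\beta$ is tuned so both contributions go to zero, uniformly in $\modulus$ and $\unit$; the $\log(\radius_{j+1}/\radius_j)$ arising from Cauchy--Schwarz is harmless once we take $\radius_{j+1}/\radius_j$ bounded. The main obstacle is precisely the inner annulus: the absence of pointwise Fourier decay there forces the $L^p$--$L^\infty$ interpolation with Proposition~\ref{prop:cont_max_fxn_bound_for_higher_order_spheres}, and a careful balancing of exponents is the technical heart of the argument.
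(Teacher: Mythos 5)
Your outer-region argument (Fourier decay of $\contFT{d\surfacemeasure_\radius}$ on $\{|\xi|>\radius_j^{-1+\beta}\}$, combined with the Stein derivative identity and Plancherel) is correct and supplies a genuine decaying factor $\radius_j^{-\beta(\dimension-1)/\degree}$. The gap is the inner annulus $\{\radius_j^{-1}<|\xi|\le\radius_j^{-1+\beta}\}$. There, the proposed interpolation of the trivial sup bound on the multiplier against the $L^p$ maximal bound of Proposition~\ref{prop:cont_max_fxn_bound_for_higher_order_spheres} cannot produce a small $L^2\to L^2$ operator norm: small Lebesgue measure of the Fourier support of the multiplier does not make the $L^2$ operator norm small. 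Concretely, take $\hat\fxn=|E|^{-1/2}\indicator{E}$ with $E$ the inner annulus; then $\|\fxn\|_2=1$, and at $\radius\approx\radius_j$ one has $\radius|\xi|\approx 1$ on $E$, so the multiplier is of size $\approx 1$ there and $\|N_{\radius_j}^{j,high}\fxn\|_2\gtrsim 1$. No Bernstein-type or $L^p$-interpolation manipulation with the operator of Proposition~\ref{prop:cont_max_fxn_bound_for_higher_order_spheres} recovers smallness, because the loss is already present on $L^2$ for a single radius.

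The correct resolution is not to patch the inner annulus but to move the low/high cutoff to the intermediate scale from the start, i.e.\ take $\bumpfxn_j$ supported in $\{\|\xi\|_\infty\le\radius_j^{-1+\beta}\}$ for some fixed $0<\beta<1$ rather than $\{\|\xi\|_\infty\le\radius_j^{-1}\}$. Then the high-frequency piece is exactly your outer region, where $\radius|\xi|>\radius_j^{\beta}\to\infty$ and the Fourier decay does produce $\radius_j^{-\beta(\dimension-1)/\degree}\to 0$. The inner annulus becomes part of the \emph{low}-frequency piece, and it is handled there not by Fourier decay (which is useless in that range, as you observed) but by the spectral-measure argument: one only needs $\spectralmeasure_{\mfxn}(\{\absolutevalueof{\spectralpoint}\le\radius_j^{-1+\beta}\})\to\spectralmeasure_{\mfxn}(\{0\})=0$, for which any cutoff scale tending to zero suffices. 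Note also the paper's own proof here is extremely terse (it only cites \eqref{kspherical_decay}, Proposition~\ref{prop:cont_max_fxn_bound_for_higher_order_spheres}, and Calder\'on transference and asserts the bound $\radius_j^{-(\dimension-1)/\degree}$), and as written with the cutoff at $\radius_j^{-1}$ it runs into precisely the issue you noticed; so your instinct that the scales need to be reweighted is sound. The problem is only the mechanism you propose for the leftover annulus, which should instead be absorbed into the low-frequency piece.

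Two smaller points: the paper takes the supremum over all $\radius>\radius_j$, not just $\radius_j\le\radius<\radius_{j+1}$, which is harmless but means you cannot assume $\radius_{j+1}/\radius_j$ bounded; and the transference chain should be two steps, Calder\'on from $\measurespace\times\lattice$ to $\lattice$ and then Magyar--Stein--Wainger from $\lattice$ to $\euclideanspace$, rather than a single application of the latter.
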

%
\begin{proof}
We now show that the high frequency part is equiconvergent with the low frequency part.
Let
\[
\contoperator_{j}^{*,high} \fxn 
= \sup_{\radius > \radius_j} \absolutevalueof{\contoperator_{\radius}^{high} \fxn} 
.
\]
By \eqref{kspherical_decay}, Proposition~\ref{prop:cont_max_fxn_bound_for_higher_order_spheres} and Calder\'on transference principle, we have the $L^2(\measurespace \times \lattice)$ estimates
\[
\Lpnorm{2}{\measurespace \times \lattice}{\contoperator_{j}^{*,high} \transferfxn} 
\lesssim \radius_j^{-\frac{\dimension-1}{\degree}} \Lpnorm{2}{\measurespace \times \lattice}{\transferfxn}
\]
with implicit constants that depend only on the degree $\degree$ and dimension $\dimension$. 
Choosing $J^{high}$ sufficiently large we conclude the proposition. 
\end{proof}

Our final proposition handles the low frequency part and is the only place where the mean zero and strong ergodicity conditions are used. 
\begin{prop}
If $\epsilon>0$ and $\bigmodulus \in \N$, then there exists a $J^{low}(\epsilon, \bigmodulus)$ sufficiently large such that for all $\modulus \leq \bigmodulus$, $\unit \in \unitsmod{\modulus}$ and $j > J^{low}(\epsilon, \bigmodulus)$, 
\begin{equation}\label{low_frequency_oscillation}
\Lpnorm{2}{\measurespace \times \lattice}{\contoperator^{*,low}_{j}\transferfxn}
\lesssim \epsilon \Lpnorm{2}{\measurespace \times \lattice}{\transferfxn} 
\end{equation}
with implicit constants independent of $\unit$ and $\modulus$. 
\end{prop}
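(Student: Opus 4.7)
The plan is to factor $\contoperator^{low}_\radius$ as the composition of a radius-independent Fourier projection with the (properly normalized) continuous $\degree$-spherical averaging operator, and then combine the continuous $\degree$-spherical maximal theorem with the spectral theorem in order to exploit the mean-zero (respectively strong-ergodicity) hypothesis at the rational $\latticepoint/\modulus = 0$ (respectively $\latticepoint/\modulus \neq 0$). I treat the case $\latticepoint/\modulus = 0$ in detail; the remaining finitely many cases are analogous, with $\bumpfxn_j$ translated so as to be centered at $\latticepoint/\modulus$ and strong ergodicity replacing the mean-zero hypothesis.

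When $\latticepoint/\modulus = 0$, the support condition $\norm{\xi}_\infty \leq \radius_j^{-1}$ on $\bumpfxn_j$ isolates the $\latticepoint = 0$ term in the defining sum for $\contmultiplier_\radius$, and for $j$ sufficiently large $\Psi_1(\modulus \xi) \equiv 1$ on the support of $\bumpfxn_j$. Hence $\contmultiplier^{low}_\radius(\xi) = \bumpfxn_j(\xi) \, \contFT{d\surfacemeasure_{\radius}}(\xi)$, and as $\lattice$-convolution operators $\contoperator^{low}_\radius = \contavgop \circ \tilde{P}_j$, where $\tilde{P}_j$ denotes the Fourier multiplier with symbol $\bumpfxn_j$ and $\contavgop$ carries the properly normalized multiplier arising from $\contFT{d\surfacemeasure_{\radius}}$. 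Since $\tilde{P}_j \transferfxn$ is independent of $\radius$, we obtain the pointwise bound $\contoperator^{*, low}_j \transferfxn \leq \contmaxop(\tilde{P}_j \transferfxn)$. Combining Proposition~\ref{prop:cont_max_fxn_bound_for_higher_order_spheres} with the Magyar--Stein--Wainger transference lemma and the Calder\'on transference principle yields
\[
\Lpnorm{2}{\measurespace \times \lattice}{\contoperator^{*, low}_j \transferfxn} \lesssim \Lpnorm{2}{\measurespace \times \lattice}{\tilde{P}_j \transferfxn},
\]
with implicit constants independent of $\modulus \leq \bigmodulus$ and $\unit \in \unitsmod{\modulus}$.

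To bound the right-hand side, apply Calder\'on transference to $\tilde{P}_j$ (again a lattice convolution in the $n$-variable) and invoke the spectral theorem applied to the commuting family $\measurepreservingtransform$:
\[
\Lpnorm{2}{\measurespace \times \lattice}{\tilde{P}_j \transferfxn}^2
\lesssim \sizeof{\discretecube{N}} \int_\torus |\bumpfxn_j(\xi)|^2 \, d\spectralmeasure_\mfxn(\xi)
\lesssim \sizeof{\discretecube{N}} \, \spectralmeasure_\mfxn\!\inparentheses{B(0, \radius_j^{-1})}.
\]
Since $\int_\measurespace \mfxn\, d\measure = 0$ we have $\spectralmeasure_\mfxn(\{0\}) = 0$, and as $\spectralmeasure_\mfxn$ is a finite Borel measure on $\torus$ of total mass $\Lpnorm{2}{\measurespace}{\mfxn}^2$, continuity from above supplies a $\delta = \delta(\epsilon, \mfxn) > 0$ such that $\spectralmeasure_\mfxn(B(0, \delta)) < \epsilon^2 \Lpnorm{2}{\measurespace}{\mfxn}^2$. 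Taking $J^{low}$ large enough that $\radius_j^{-1} < \delta$ for every $j > J^{low}$ and recalling $\Lpnorm{2}{\measurespace \times \lattice}{\transferfxn}^2 = \sizeof{\discretecube{N}} \Lpnorm{2}{\measurespace}{\mfxn}^2$ yields \eqref{low_frequency_oscillation}. For $\latticepoint/\modulus \neq 0$ the same argument applies after recentering $\bumpfxn_j$ at $\latticepoint/\modulus$: mean zero is replaced by the strong-ergodicity characterization $\spectralmeasure_\mfxn(\Q^\dimension \setminus \{0\}) \equiv 0$ which gives $\spectralmeasure_\mfxn(\{\latticepoint/\modulus\}) = 0$, and the finiteness of the rational set $\inbraces{\latticepoint/\modulus : \modulus \leq \bigmodulus}$ on $\torus$ permits a single $\delta$ uniform across these rationals.

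The main technical obstacle is the careful bookkeeping of the $\radius^{\dimension-\degree}$ normalization of the Gelfand--Leray measure: $\contFT{d\surfacemeasure_\radius}(0)$ scales like $\radius^{\dimension-\degree}\volumeof(\unithigherordersphere)$, so in order for $\contmaxop$ above to be identified with the continuous $\degree$-spherical maximal function of Section~\ref{section:cont_max_fxns}, on which Proposition~\ref{prop:cont_max_fxn_bound_for_higher_order_spheres} is stated, the operator $\contavgop$ must incorporate the compensating $\radius^{\degree-\dimension}$ factor matching the mass-normalized setup of that section. Once this factor is tracked uniformly in $\modulus \leq \bigmodulus$ and $\unit \in \unitsmod{\modulus}$, every remaining step is a standard application of the spectral theorem together with the transference principles already used in the paper.
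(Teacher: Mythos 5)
Your proposal is correct but takes a genuinely different route from the paper. The paper linearizes the supremum (writing $\contoperator^{*,low}_{j}\transferfxn = \contoperator^{low}_{\radius(\measurespacepoint)}\transferfxn$), applies Plancherel and the spectral theorem directly to the linearized operator, and uses the delta-sequence $\arithmeticFT{a_N}$ to arrive at $\int_\torus \absolutevalueof{\contmultiplier^{low}_{\radius(\measurespacepoint)}}^2 * \arithmeticFT{a_N}\, d\spectralmeasure_{\mfxn}$, which is then killed by the localization of $\contmultiplier^{low}_{\radius(\measurespacepoint)}$ to a ball of radius $\radius_j^{-1}$ together with the mean-zero hypothesis. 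You instead factor $\contoperator^{low}_\radius = \contavgop \circ \widetilde{P}_j$ with the Fourier projection $\widetilde{P}_j$ independent of $\radius$, so that $\contoperator^{*,low}_{j}\transferfxn$ is dominated pointwise by the lattice $\degree$-spherical maximal function of $\widetilde{P}_j \transferfxn$, and discharge the supremum in one stroke by the $L^2$ bound for $N^{\Fareyfraction}_*$ already established in Lemma~\ref{lemma:maxHLop_arc_bound} (via Proposition~\ref{prop:cont_max_fxn_bound_for_higher_order_spheres} and the transference principles). This leaves only the stationary projection $\widetilde{P}_j$ to analyze spectrally, which is more modular and conceptually cleaner; the paper's approach buys self-containment, closing this step without invoking the maximal theorem, using only the pointwise localization of the linearized multiplier. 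Note that your displayed estimate $\Lpnorm{2}{\measurespace\times\lattice}{\widetilde{P}_j\transferfxn}^2 \lesssim \sizeof{\discretecube{N}}\int_\torus\absolutevalueof{\bumpfxn_j}^2 d\spectralmeasure_\mfxn$ still requires precisely the Fej\'er-kernel device with $a_N$ that the paper uses; your factorization defers this computation to a cleaner operator rather than eliminating it, and the phrase ``apply Calder\'on transference to $\widetilde{P}_j$'' is hiding that calculation. One small inaccuracy in your closing remark: the identity $I_\radius = \radius^{\dimension-\degree}e^{-\pi\epsilon\radius^\degree}\contFT{d\surfacemeasure_\radius}$ together with $I_\radius(0)\sim\radius^{\dimension-\degree}$ forces $\contFT{d\surfacemeasure_\radius}(0)\sim 1$, so the paper's $d\surfacemeasure_\radius$ is already mass-normalized and no compensating $\radius^{\degree-\dimension}$ factor is needed. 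This does not affect the validity of the argument, only the bookkeeping comment at the end.
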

%
\begin{proof}
We linearize the maximal function $\contoperator^{*,low}_{j}\transferfxn$ to $\contoperator_{\radius(\measurespacepoint)}^{low}\transferfxn(\measurespacepoint,\latticepoint)$ where for each $\measurespacepoint \in \measurespace$, there exists $\radius_j < \radius(\measurespacepoint) \in \acceptableradii$ such that $\contoperator^{*,low}_{j}\transferfxn(\measurespacepoint,\latticepoint) = \contoperator_{\radius(\measurespacepoint)}^{low}\transferfxn(\measurespacepoint,\latticepoint)$. 
Plancherel's theorem implies 
\[
\Lpnorm{2}{\measurespace \times \lattice}{\contoperator^{*,low}_{j}\transferfxn(\measurespacepoint,\latticepoint)}^2 
= \int_{\torus}
\absolutevalueof{
\contmultiplier_{\radius(\measurespacepoint)}^{low} \cdot \arithmeticFT{\transferfxn}(\measurespacepoint,\freqpoint) \, d\freqpoint
}^2 
. 
\]
Therefore we have the identities
\[
\absolutevalueof{\arithmeticFT{\transferfxn}(\measurespacepoint,\freqpoint)}^2 = \sum_{m,n} \transferfxn(\measurespacepoint,m) \overline{\transferfxn(\measurespacepoint,n)} e((m-n)\xi)
,
\]
and
\begin{align*}
\int_{\measurespace} \absolutevalueof{\arithmeticFT{\transferfxn}(\measurespacepoint, \freqpoint)}^2 \, d\measure(\measurespacepoint)
& = \int_{\measurespace} \sum_{m,n} \transferfxn(\measurespacepoint,m) \overline{\transferfxn(\measurespacepoint,n)} e((m-n)\freqpoint) \, d\measure(\measurespacepoint) \\
& = \sum_{m,n} e((m-n)\freqpoint) \int_{\measurespace} \transferfxn(\measurespacepoint,m) \overline{\transferfxn(\measurespacepoint,n)} \, d\measure(\measurespacepoint) \\
& = \sum_{m,n} \innerproductof{T^{m-n} \fxn}{\fxn} \eof{(m-n) \freqpoint} 
. 
\end{align*}
By the Spectral theorem,
\[
\innerproductof{T^{m-n} \fxn}{\fxn} = \int_{\torus} \eof{(m-n)\spectralpoint} \, d\spectralmeasure_{\fxn}(\spectralpoint)
,
\]
so that
\begin{align*}
& \int_{\torus} \absolutevalueof{\contmultiplier_{\radius(\measurespacepoint)}^{low}(\freqpoint)}^2 
\sum_{m,n \in \lattice} \innerproductof{T^{m-n} \fxn}{\fxn} \eof{(m-n) \freqpoint} \, d\freqpoint \\
& = \int_{\torus} \absolutevalueof{\contmultiplier_{\radius(\measurespacepoint)}^{low}(\freqpoint)}^2 \left[ \int_{\torus} 
\sum_{m,n \in \discretecube{N}} \eof{(m-n)\spectralpoint} \eof{(m-n) \freqpoint} \, d\spectralmeasure_{\fxn}(\spectralpoint)
\right] \, d\freqpoint 
. 
\end{align*}
By Fubini's theorem and orthogonality, this is just 
\begin{equation}\label{unnormalized_spectral_equality}
\int_{\torus} \absolutevalueof{\contmultiplier_{\radius(\measurespacepoint)}^{low}(\freqpoint)}^2 
\int_{\torus} \sum_{n} \#\inbraces{(m_1,m_2) \in \discretecube{N} \times \discretecube{N} : m_1-m_2 = n} \cdot e(n\spectralpoint) e(n \freqpoint) \, d\spectralmeasure_{\fxn}(\spectralpoint) \, d\freqpoint . 
\end{equation}

If we normalize the sum in \eqref{unnormalized_spectral_equality} to define the sequence
\[
a_N(n) :=  \frac{\#\{(m_1,m_2) \in \discretecube{N} \times \discretecube{N} : m_1-m_2 = n \}} {\sizeof{\discretecube{N}}} ,
\]
then $a_N \to 1$ as $N \to \infty$ which means that $\arithmeticFT{a_N}$ is a \emph{delta sequence}; that is, $\arithmeticFT{a_N} \to \delta$ as $N \to \infty$. Therefore, 
\begin{align*}
\sizeof{\discretecube{N}}^{-1} \Lpnorm{2}{\measurespace \times \lattice}{\contoperator_{j,low}^{*}\transferfxn(\measurespacepoint,\latticepoint)}^2 
& = \int_{\torus}
\absolutevalueof{
(\contmultiplier_{\radius(\measurespacepoint)}^{low})^2 * \arithmeticFT{a_N}
(\spectralpoint)}
d\spectralmeasure_{\fxn}(\spectralpoint) 
\end{align*}
Now we make use of the fact that multiplier is localized to low frequencies. For all $\epsilon>0$, there exists $M(\epsilon) \in \N$ such that $\absolutevalueof{\arithmeticFT{a_N} - \delta} < \epsilon$ for all $N>M$ and 
\begin{align*}
\int_{\torus}
\absolutevalueof{ \absolutevalueof{
\contmultiplier_{\radius(\measurespacepoint)}^{low}}^2 * \arithmeticFT{a_N}
(\spectralpoint) \;
d\spectralmeasure_{\fxn}(\spectralpoint) }
& \lesssim \spectralmeasure_{\fxn}(\absolutevalueof{\spectralpoint} \leq \radius_j^{-1})+ \epsilon \spectralmeasure_{\fxn}(\torus) \\
& = \spectralmeasure_{\fxn}(\absolutevalueof{\spectralpoint} \leq \radius_j^{-1}) + \epsilon \Lpnorm{2}{\measurespace}{\fxn}^2 
. 
\end{align*}
As $j \to \infty$, $\spectralmeasure_{\fxn}(\absolutevalueof{\spectralpoint} \leq \radius_j^{-1}) \to \spectralmeasure_{\fxn}(0)$, but $\spectralmeasure_{\fxn}(0) = \int_{\measurespace} \fxn d \measure = 0$. Choosing $j$ large enough, we are finished.
\end{proof}
%

\section*{Acknowledgements}
The author would like to thank Elias Stein for introducing the author to this problem, and for sharing his insights and intuitions while providing generous encouragement. 

\bibliographystyle{amsalpha}
\bibliography{references}
\end{document}